\documentclass[11pt]{article}
\usepackage[T1]{fontenc}
\usepackage[utf8]{inputenc}
\usepackage{lmodern}
\usepackage{amsmath,amssymb,amsthm}
\usepackage{booktabs,array,tabularx,longtable}
\usepackage[margin=0.84in]{geometry}
\usepackage{xcolor,microtype,graphicx,float}
\usepackage[shortlabels]{enumitem}
\usepackage{parskip} 
\usepackage{tikz}
\usetikzlibrary{arrows.meta,positioning,shapes.geometric,fit,calc}
\usepackage[colorlinks=true,linkcolor=blue!45!black,citecolor=blue!45!black,urlcolor=blue!45!black]{hyperref}
\usepackage[nameinlink,capitalize]{cleveref}
\crefname{figure}{Figure}{Figures}\Crefname{figure}{Figure}{Figures}
\crefname{table}{Table}{Tables}\Crefname{table}{Table}{Tables}
\crefname{section}{Section}{Sections}\Crefname{section}{Section}{Sections}
\crefname{equation}{Equation}{Equations}\Crefname{equation}{Equation}{Equations}

\definecolor{ink}{HTML}{17324D}
\definecolor{sea}{HTML}{2F6F8F}
\definecolor{sand}{HTML}{D2A24C}
\definecolor{rust}{HTML}{A45542}
\definecolor{sage}{HTML}{4F8F70}
\definecolor{pale}{HTML}{EEF4F6}
\definecolor{softgray}{HTML}{F5F6F6}

\newtheorem{theorem}{Theorem}[section]
\newtheorem{proposition}[theorem]{Proposition}

\theoremstyle{definition}
\newtheorem{definition}[theorem]{Definition}

\newcommand{\D}{\mathbb D}
\newcommand{\C}{\mathbb C}
\newcommand{\Sstar}[1]{\mathcal S^*(#1)}
\newcommand{\Htwo}{H_2(2)}
\newcommand{\status}[1]{{\mdseries\textsc{#1}}}

\hypersetup{
  pdftitle={The Geometric Function Atlas: A Software System for Radius and Coefficient Problems},
  pdfauthor={Prasanna Devadiga}
}

\title{\textcolor{ink}{The Geometric Function Atlas:\\A Software System for Radius and Coefficient Problems}}
\author{Prasanna Devadiga\thanks{These authors contributed equally.}\textsuperscript{,\ 1}, Kishan Gurumurthy\footnotemark[1]\textsuperscript{,\ 1}, Arya Suneesh\footnotemark[1]\textsuperscript{,\ 1},\\
Pushparaj Devadiga\footnotemark[1]\textsuperscript{,\ 2}, Asha Sebastian\textsuperscript{1}\\[6pt]
\normalsize \textsuperscript{1}Department of Computer Science and Engineering,\\
\normalsize Indian Institute of Information Technology Kottayam, Kerala, India\\[4pt]
\normalsize \textsuperscript{2}K.~J. Somaiya College of Engineering, Mumbai, India\\[4pt]
\normalsize\texttt{devadigaprasanna28@gmail.com}, \texttt{kishangurumurthy@outlook.com},\\
\normalsize\texttt{aryasuneesh3@gmail.com}, 
\texttt{pdevadiga451@gmail.com}, 
\texttt{asha@iiitkottayam.ac.in}}
\date{August 2026}

\begin{document}
\maketitle

\begin{abstract}
Let $\D=\{z\in\C:|z|<1\}$, and let $\mathcal A$ be the class of analytic
functions normalized by $f(0)=0$ and $f'(0)=1$.  For an admissible Ma--Minda
generator $\varphi$, write
$\Sstar{\varphi}=\{f\in\mathcal A:zf'(z)/f(z)\prec\varphi(z)\}$.  Given two
generators $\varphi_1$ and $\varphi_2$, we study the largest $R\in(0,1]$ for
which $f(rz)/r\in\Sstar{\varphi_2}$ whenever
$f\in\Sstar{\varphi_1}$ and $0<r\le R$.  We present the Geometric Function
Atlas, a software system that records these directed radius problems and
coefficient problems by their exact generators, parameter domains,
normalizations, and sharpness statements.  This representation identifies the
same class across alternative names and transliterations while keeping the two
directions of an inclusion problem distinct.  The coefficient engine recovers
all 216 Fekete--Szeg\H{o} values predicted by the general Ma--Minda formula
across 36 registered classes.  The directed-radius atlas contains 702 ordered
comparisons; omitting direction merges unequal constants in 253 of the 262
class-pair families represented in both directions.  Using boundary contact,
analytic majorants, and explicit Ma--Minda extremals, we prove nineteen exact
sharp inclusion radii.  In particular, the sine-to-modified-sigmoid radius is
$\arcsin((e-1)/(e+1))$, improving the published sufficient radius
$\operatorname{arsinh}((e-1)/(e+1))$ by 7.45\%.  For the crescent and
exponential classes, the reciprocal sharp radii are $\sin1$ and
$\log(1+\sqrt2)$; the latter corrects a published constant.  The Python
package, exact certificates, and registry records accompany the paper.
\end{abstract}

\medskip
\noindent\textbf{2020 Mathematics Subject Classification.}
Primary 30C45; Secondary 30C50, 30-04, 30-11, 68W30.

\noindent\textbf{Keywords.}
geometric function theory; Ma--Minda starlike functions; sharp inclusion
radii; experimental mathematics; mathematical software; computer-assisted
proof; mathematical registry.

\section{Introduction}\label{sec:intro}

Geometric function theory turns questions about conformal maps into questions
about numbers.  For a normalized univalent function one asks for the largest
value of a coefficient functional, or for the largest disk on which one
geometrically defined class is contained in another; in each case the answer is
a single sharp constant.  This style of extremal problem has organized much of
the subject since Bieberbach's $1916$ coefficient estimate and the conjecture it
launched~\cite{Bieberbach}, settled seven decades later by de~Branges
\cite{DeBranges} and recounted in Duren's monograph~\cite{Duren}.  A constant
reported on its own, however, does not identify the problem it answers.  The
same decimal can arise under different normalizations of the maps involved; a
single class name can denote different defining functions in different papers;
and the radius from a class $A$ to a class $B$ is, in general, not the radius
from $B$ to $A$.  The number is unambiguous; the problem attached to it
frequently is not.

Within a single paper this costs nothing, because the surrounding prose restores
the missing context.  Across an entire literature it is a genuine hazard.  A
numerical optimizer can locate a stable extremum to any precision without
revealing whether the value reproduces a known theorem, reverses its direction,
specializes a more general result, or merely reflects a different parameter
convention; sharper arithmetic then answers the wrong question with only greater
confidence.  The identity of a problem must therefore be fixed before any
computation is carried out on it -- a rule that is simple to state and hard to
enforce by hand once the results number in the hundreds.

Names are especially unreliable across languages and generations.  An older
Russian-language result, for example, may be catalogued under a Cyrillic title,
several transliterations, or an English translation; a later paper may attach a
new descriptive class name to the same defining function.  Ordinary keyword
search then treats one mathematical object as several unrelated objects and
makes rediscovery difficult to detect.  The converse failure is equally
dangerous: the same short class name may be reused for genuinely different
generators.  The Atlas addresses both failures through representation rather
than vocabulary.  It stores names, translations, transliterations, and notation
as retrieval aliases, but identifies the mathematical object by its exact
generator and parameter domain.  Aliases can locate a possible match; only
canonical mathematical identity can merge records.  At the claim level,
direction, normalization, functional, parameters, value, and sharpness wording
then determine whether two statements answer the same problem.  This schema
turns suspected rediscovery into a deterministic comparison while preserving
the language, title, and provenance of every original source.

That difficulty has intensified over the past decade.  Once the Bieberbach
conjecture was resolved, attention turned to subclasses of prescribed geometry,
and Ma and Minda unified many of them by encoding each class through a single
analytic generator~\cite{MaMinda}.  Since roughly $2015$ the template has been
filled in by a steady stream of generator-specific families: among others the
lemniscate~\cite{SokolStankiewicz}, exponential~\cite{Mendiratta},
cardioid~\cite{SharmaCardioid}, lune~\cite{RainaSokol,GandhiLune},
sine~\cite{ChoSine}, Bell~\cite{BellClass}, nephroid~\cite{WaniNephroid},
lima\c{c}on~\cite{MasihKanas}, petal~\cite{KumarArora},
hyperbolic-cosine~\cite{MundaliaKumar}, tangent~\cite{UllahTanh},
bean~\cite{KumarYadav}, modified-sigmoid~\cite{GoelKumar}, and
rational~\cite{KumarRavichandranRational} classes, alongside the classical
Janowski~\cite{Janowski} and uniformly convex~\cite{Ronning} families.  Each new
generator reopens the same battery of questions -- coefficient and
Fekete--Szeg\H{o} bounds~\cite{FeketeSzego,KeoghMerkes}, Hankel
determinants~\cite{LeeRavichandran,KowalczykLeckoThomas}, Zalcman-type
functionals~\cite{ZalcmanMa,RavichandranVerma}, and inclusion
radii~\cite{KhatterExpLemniscate,SebastianRadius} -- and radius comparison is
intrinsically directed, so $n$ generators produce up to $n(n-1)$ ordered
cross-class problems before any parameter is varied.  Scattered across journals
and written under incompatible conventions for direction, normalization, and
sharpness, these results are exactly the setting in which duplication and late
rediscovery become structural risks rather than accidents.

Two established practices suggest how to respond.  Mathematical databases such as
the OEIS~\cite{SloaneOEIS} and the LMFDB~\cite{CremonaLMFDB} have shown, again and
again, that giving scattered objects a canonical identity and storing them with
provenance turns a literature into infrastructure on which comparison and
computation become routine.  Experimental mathematics, in parallel, treats
high-precision computation and the recognition of closed forms as legitimate
stages of discovery, provided they are held strictly apart from
proof~\cite{BorweinBailey,BorweinBaileyNotices,PSLQ}.  The Geometric Function Atlas, the
mathematical software system described in this paper, joins the two
\cite{GFAtlasSoftware}.  It couples
a normalized catalogue of Ma--Minda classes and theorem statements with two
computational engines -- a coefficient engine built on the Schur-parameter
representation of the coefficient body~\cite{Schur,Simon}, and a directed-radius
engine built on target-domain boundary contact -- under a single evidence model
that keeps numerical location, symbolic recognition, global verification with
attainment, literature reconciliation, and human review as distinct, separately
recorded stages.  The corpus that results is more than a searchable index: it
generates new comparison problems, tests published sharpness claims against
independent recomputation, and, for each unresolved row, names the analytic step
that would settle it.  Its directed radius atlas alone spans $28$ registered
source generators against $26$ targets -- $702$ ordered pairs once
self-comparisons are removed (\cref{fig:atlas-matrix}) -- a space far too large
to police by hand.

The scale is already substantial.  A database snapshot of $19$ July $2026$
records $1{,}088$ registered papers and $7{,}043$ extracted theorem or lemma
blocks, from which deterministic normalization has distilled $1{,}039$ structured
claim rows -- among them $649$ coefficient bounds and $39$ radius claims.  We
report such figures as dated snapshots rather than fixed quantities: each is
regenerated from the released data, and none of the mathematics below depends on
them.

A small design survey ($n=4$ after discarding one test entry: two assistant
professors, a researcher, and a doctoral student; qualitative only) supplied the
acceptance constraints against which the software was designed.  All four
rejected a bracket-only constant as final publishable evidence, three would
accept registry certification as part of a proof once the method is published
and citable, and a known-results index ranked first among the proposed features.
The de-identified wording and response distributions are available in the public release.

The paper makes five contributions.

\begin{enumerate}[leftmargin=*,itemsep=3pt]
  \item \textbf{Canonical problem identity, tested by ablation.} We specify the
  minimum key needed to compare coefficient and radius claims, and measure the
  collisions caused by discarding direction or canonical-object identity
  (\cref{sec:ablation}).
  \item \textbf{A reusable discovery-and-verification system.} A single evidence
  ladder carries each problem from high-precision location through symbolic
  recognition to a global proof with attainment, with independent rechecks and an
  append-only record (\cref{sec:instrument}; \cite{GFAtlasSoftware}).
  \item \textbf{Corpus-scale validation.} The coefficient engine reproduces every
  general-theorem Fekete--Szeg\H{o} value predicted by the Ma--Minda formula
  across the registered classes with no mismatch, and separates the
  same-literature radius controls into known matches and one improvement
  (\cref{sec:case-coeff}).
  \item \textbf{Exact sharp inclusion radii.} We prove seventeen radii in one
  unified section: a ten-result core portfolio and seven additional
  executable certificates (\cref{sec:sharp-results}).  A reciprocal pair is
  treated separately because it makes direction dependence explicit while
  correcting a published constant (\cref{sec:crescent-exponential}).
  \item \textbf{Coefficient-functional families.} Two low-order functionals,
  $|a_3|$ and $|a_2a_3|$, are closed-form sharp for all $39$ registered classes
  (\cref{sec:low-order}); a Toeplitz correspondence reduces the low-order
  Toeplitz determinants to functionals already certified (\cref{sec:toeplitz});
  and higher functionals are recorded as certified brackets and
  candidates (\cref{sec:fs2}).
\end{enumerate}

The unifying contribution is the registry itself, together with the
experimental program it makes possible.  A single theorem can be
written without such infrastructure; what cannot be done without it is the
systematic comparison of hundreds of directed problems at once -- the controlled
recovery of known results, and the repeatable replacement of disk bounds by true
boundary contacts.

\paragraph{Evidence fields.}  Every row records three independent fields:
a \emph{proof status} (proven exact, certified
by symbolic identity or outward-rounded interval, or left as an explicit
bracket), a \emph{literature status} from a scoped search (a match to a published
statement, or no match), and a \emph{human-review status}, complete for every
result reported here.  The tables expose these fields separately, allowing
mathematical verification, literature reconciliation, and review to advance
independently.

Two failure cases show why the registry stores the hypotheses of each method
alongside its outputs.  For $f(z)=z+5z^2$ one has
\[
 \frac{zf'(z)}{f(z)}=\frac{1+10z}{1+5z},
\]
which has a pole at $z=-1/5$, so that the radius of starlikeness is $0.1$; an
outer-in circle scan that invokes a minimum principle outside its zero-free
domain instead returns $0.7$, a factor-of-seven error traceable to the
misapplied principle and not to any arithmetic.  The second concerns provenance.
A sufficient condition of the shape ``$\sum_{n\ge2}(n-1)|a_n|\le1$ implies
starlikeness'' is easy to state and easy to misattribute, yet no theorem of that
form holds: $z+0.9z^2$ is an immediate counterexample, its derivative vanishing
at $-5/9$.  The two examples enforce the requirements adopted here -- that every
method carry its validity conditions, and every literature claim carry a
checkable source statement.

\section{Canonical mathematical identity}\label{sec:identity}

Let $\D=\{z\in\C:|z|<1\}$ and let
$\mathcal A$ be the normalized analytic functions
\[
 f(z)=z+a_2z^2+a_3z^3+\cdots.
\]
For an admissible analytic generator $\varphi$ with $\varphi(0)=1$, the
Ma--Minda starlike class is
\begin{equation}\label{eq:maminda}
 \Sstar{\varphi}
 =\left\{f\in\mathcal A:\frac{zf'(z)}{f(z)}\prec\varphi(z)\right\}.
\end{equation}
This framework unifies many starlike subclasses while preserving the
target geometry~\cite{MaMinda}.

\subsection{Coefficient problem keys}

A coefficient problem is not determined by a class name and a decimal alone.
Its minimal key is the tuple
\[
 (\varphi,\;\text{normalization of }f,\;L,\;\text{parameters of }L,
 \;\text{claim type}),
\]
in which $L$ may be the Fekete--Szeg\H{o} functional
$a_3-\mu a_2^2$~\cite{FeketeSzego}, the Hankel determinant
$\Htwo=a_2a_4-a_3^2$~\cite{LeeRavichandran}, an inverse coefficient, or a
logarithmic coefficient; the claim type then records whether the assertion is an
upper bound, an attained lower value, a sharp constant, or a conjecture.

\subsection{Directed-radius problem keys}

For generators $\varphi_1$ and $\varphi_2$, define
\begin{equation}\label{eq:radius}
 R_{\Sstar{\varphi_2}}(\Sstar{\varphi_1})
 =\sup\left\{r\le1:f_r(z)=\frac{f(rz)}r\in\Sstar{\varphi_2}
 \text{ for every }f\in\Sstar{\varphi_1}\right\}.
\end{equation}
The arrow $\varphi_1\to\varphi_2$ is itself part of the identifier: reversing it
produces a different image-containment problem, and in general a different
answer (\cref{fig:direction}).

\begin{figure}[t]
\centering
\begin{tikzpicture}[font=\small,>=Latex,node distance=18mm]
\tikzset{obj/.style={draw=sea,rounded corners,fill=pale,minimum width=3.35cm,
 minimum height=1.05cm,align=center},arrow/.style={-Latex,very thick,draw=rust}}
\node[obj] (a) {$\varphi_1(\D_r)$\\source image after dilation};
\node[obj,right=of a] (b) {$\varphi_2(\D)$\\target domain};
\draw[arrow,bend left=18] (a.north east) to node[above]{containment radius} (b.north west);
\draw[arrow,bend left=18] (b.south west) to node[below]{reverse: different problem} (a.south east);
\node[draw=sand,rounded corners,fill=sand!10,below=13mm of $(a)!0.5!(b)$,
 align=center,minimum width=7.2cm] {A registry row stores both ordered endpoints, defining maps,\\parameter ranges, aliases, and the supporting statement.};
\end{tikzpicture}
\caption{Direction is mathematical data, not display metadata.  A title or
keyword search that drops the arrow can join non-equivalent results.}
\label{fig:direction}
\end{figure}

\subsection{Canonical registry key}

\cref{tab:key} summarizes the fields used before a computation becomes a
comparable research object.  We call a registered problem instance a
\emph{row}.  Symbolic expressions are stored alongside
human-readable aliases; aliases are retrieval aids, never equality evidence.
The design separates three questions that text search normally conflates:
which strings retrieve a record, which exact mathematical object those strings
denote, and which directed claim is being asserted about that object.  Thus
multiple names can resolve to one canonical generator without erasing their
source provenance, while two classes carrying the same informal name remain
separate when their defining generators differ.

\begin{table}[t]
\centering
\caption{Minimum registry identity for the two computational paths.  Retrieval
aliases help find an object but do not participate in mathematical equality.}
\label{tab:key}
\begin{tabularx}{\linewidth}{>{\raggedright\arraybackslash\bfseries}p{2.5cm}XX}
\toprule
Field & Coefficient row & Directed-radius row\\
\midrule
Object & Exact generator $\varphi$ and parameter range & Exact source and target generators\\
Retrieval aliases & Names, translations, transliterations, and source notation & Aliases attached separately to the exact source and target generators\\
Direction & Not applicable & Ordered pair $\varphi_1\to\varphi_2$\\
Quantity & Functional $L(a_2,a_3,\ldots)$ & Largest inclusion radius $r$\\
Parameters & $\mu$, coefficient index, or family parameter & Source and target family parameters\\
Evidence & Upper endpoint, lower witness, exactness status & Contact equation, global containment, sharpness\\
Provenance & Paper, theorem, normalization, certificate & Paper, theorem, direction, value, certificate\\
Human layer & Literature/usefulness decision with reviewer & Literature/usefulness decision with reviewer\\
\bottomrule
\end{tabularx}
\end{table}

\section{Registry-first software architecture}\label{sec:instrument}

At the system level the registry is a linear pipeline organized around a single
relational store (\cref{fig:architecture}).  A corpus of registered papers
is carried through layout-aware OCR and deterministic normalization into the
registry database; the database in turn feeds two computational engines -- one
for coefficient functionals, one for directed radii -- whose certified outputs
are reconciled against the extracted literature and compiled into a static public
site.  The remainder of this section describes the per-problem workflow that runs
inside that architecture.

\begin{figure}[t]
\centering
\begin{tikzpicture}[font=\small,>=Latex,
  every node/.style={align=center},
  stage/.style={draw=sea,rounded corners,fill=pale,inner sep=5pt,
    text width=3.1cm,minimum height=1.0cm},
  hub/.style={draw=ink,line width=0.8pt,rounded corners,fill=sea!14,inner sep=5pt,
    text width=8.9cm,minimum height=0.9cm},
  eng/.style={draw=sand!75!black,rounded corners,fill=sand!16,inner sep=5pt,
    text width=4.2cm,minimum height=1.4cm},
  wide/.style={draw=sea,rounded corners,fill=pale,inner sep=5pt,
    text width=8.9cm,minimum height=0.9cm},
  veng/.style={draw=sand!75!black,rounded corners,fill=sand!16,
    inner sep=5pt,text width=8.9cm,minimum height=0.9cm},
  pubnode/.style={draw=sage!70!black,line width=0.8pt,rounded corners,fill=sage!14,
    inner sep=5pt,text width=8.9cm,minimum height=0.9cm},
  arr/.style={-Latex,semithick,draw=rust}]
\node[stage] (corpus) {\textbf{Literature corpus}\\[1pt]\scriptsize registered papers \& PDFs};
\node[stage,right=7mm of corpus] (ocr) {\textbf{OCR extraction}\\[1pt]\scriptsize Marker-pdf $\to$ theorem blocks};
\node[stage,right=7mm of ocr] (norm) {\textbf{Normalization}\\[1pt]\scriptsize canonical classes, aliases};
\draw[arr] (corpus) -- (ocr);
\draw[arr] (ocr) -- (norm);
\node[hub,below=8mm of ocr] (db) {\textbf{Registry database} (SQLite)\\[1pt]\scriptsize classes $\cdot$ instances $\cdot$ claims $\cdot$ proofs $\cdot$ papers};
\draw[arr] (norm.south) -- (norm.south |- db.north);
\node[eng,anchor=north] (coef) at ([xshift=-2.35cm,yshift=-1.1cm]db.south) {\textbf{Coefficient engine}\\[1pt]\scriptsize Schur parameters,\\Fekete--Szeg\H{o}, Hankel};
\node[eng,anchor=north] (rad)  at ([xshift= 2.35cm,yshift=-1.1cm]db.south) {\textbf{Directed-radius engine}\\[1pt]\scriptsize boundary contact,\\Ma--Minda extremal};
\draw[arr] (db.south) -- (coef.north);
\draw[arr] (db.south) -- (rad.north);
\node[veng] (cert) at ([yshift=-3.95cm]db.south) {\textbf{Verification engine}\\[1pt]\scriptsize numerical, interval (mpmath), and symbolic (SymPy) tiers $\rightarrow$ a certificate or a counterexample, independently rechecked};
\node[wide,below=7mm of cert] (rec) {\textbf{Reconciliation \& evidence states}\\[1pt]\scriptsize \textsc{known} $\cdot$ \textsc{candidate improve} $\cdot$ \textsc{no extracted claim}};
\node[pubnode,below=7mm of rec] (site) {\textbf{Public registry}\\[1pt]\scriptsize precompiled static artifacts, served read-only};
\draw[arr] (coef.south) -- ([xshift=-2.35cm]cert.north);
\draw[arr] (rad.south) -- ([xshift= 2.35cm]cert.north);
\draw[arr] (cert) -- (rec);
\draw[arr] (rec) -- (site);
\end{tikzpicture}
\caption{System architecture of the Geometric Function Atlas: literature ingestion (top)
feeds a single relational store, the two engines and the verification engine,
and a reconciled, precompiled public registry.  The per-problem workflow of
\cref{fig:pipeline} runs inside the engine and verification blocks.}
\label{fig:architecture}
\end{figure}

The workflow has five stages (\cref{fig:pipeline}).  Each row records the last
completed stage, and later stages require their own supporting artifacts.
\cref{fig:worked-example} follows a single row, the sine-to-modified-sigmoid
problem of \cref{sec:case-radius}, through all five stages, and may be read
alongside the stage descriptions below.

\begin{figure}[t]
\centering
\begin{tikzpicture}[node distance=3.6mm,font=\small,>=Latex]
\tikzset{stage/.style={draw=sea,rounded corners,fill=pale,minimum width=2.3cm,
 minimum height=1.35cm,align=center},arr/.style={-Latex,very thick,draw=rust}}
\node[stage] (a) {\textbf{1. Normalize}\\\scriptsize canonical object, direction,\\\scriptsize parameters, prior claim};
\node[stage,right=of a] (b) {\textbf{2. Locate}\\\scriptsize numerical extremum\\\scriptsize or boundary contact};
\node[stage,right=of b] (c) {\textbf{3. Recognize}\\\scriptsize candidate exact\\\scriptsize expression};
\node[stage,right=of c] (d) {\textbf{4. Certify}\\\scriptsize global upper evidence\\\scriptsize and attainment};
\node[stage,right=of d] (e) {\textbf{5. Reconcile}\\\scriptsize comparable literature\\\scriptsize and human review};
\foreach \x/\y in {a/b,b/c,c/d,d/e}{\draw[arr] (\x)--(\y);}
\end{tikzpicture}
\caption{The five-stage per-problem discovery pipeline.  Promotion to each
stage requires the artifact named by that stage.}
\label{fig:pipeline}
\end{figure}

\paragraph{Normalize.}
The normalization stage resolves a submitted name to its defining generator and verifies the
parameter convention.  A radius task is rejected if either endpoint is
ambiguous.  A coefficient task is rejected if the functional or normalization
is incomplete.

\paragraph{Locate.}
For a radius problem, the search increases $r$ and samples boundary angle to
locate the first target-domain contact.  For a coefficient problem, Schur
parameters replace an infinite-dimensional family by a compact finite
parameter domain before optimization.  This stage uses high-precision numerical
computation and records a located candidate.

\paragraph{Recognize.}
A stable value is compared with a deliberately small vocabulary suggested by
the registered maps: algebraic constants, logarithms, inverse trigonometric
functions, and simple compositions.  A match is stored as a recognized
candidate.  Independent high-precision evaluation checks
that the expression and decimal agree.

\paragraph{Certify.}
The engine seeks both sides of a sharp claim: a global upper argument and an
attaining witness.  Depending on structure, the global step is discharged by
a symbolic reduction, a positive-coefficient inequality, a branch-free target
predicate, or outward-rounded interval subdivision.  If the upper endpoint
does not meet the lower witness, the result remains a bracket.

Throughout the paper, \emph{certified} carries a deliberately narrow
computational meaning: a certified row is one supported either by a
machine-checkable symbolic identity or by an outward-rounded interval enclosure
under the stated arithmetic model, in each case accompanied by an independent
numerical recheck.  The classical Schur-body and Ma--Minda reductions are the
mathematical premises of these certificates.

The plots serve as inspection surfaces for boundary contact and enclosure width.
They are generated from the same constants and registered maps as the analytic
inequalities and interval certificates.

\begin{figure}[t]
\centering
\begin{tikzpicture}[font=\footnotesize,>=Latex,node distance=3mm]
\tikzset{
 card/.style={draw=sea,rounded corners,fill=pale,text width=2.75cm,
  minimum width=2.9cm,minimum height=2.15cm,align=center,inner sep=4pt},
 arr/.style={-Latex,very thick,draw=rust}}
\node[card] (n1) {\textbf{1. Normalize}\\[3pt]
 sine $\to$ modified sigmoid\\
 prior claim located};
\node[card,anchor=north west] (n2) at ([xshift=3mm]n1.north east) {\textbf{2. Locate}\\[3pt]
 $r\approx0.480381079$\\
 positive-real contact};
\node[card,anchor=north west] (n3) at ([xshift=3mm]n2.north east) {\textbf{3. Recognize}\\[3pt]
 $\displaystyle\arcsin\frac{e-1}{e+1}$\\
 exact candidate};
\node[card,anchor=north west] (n4) at ([xshift=3mm]n3.north east) {\textbf{4. Certify}\\[3pt]
 inverse + ray bound\\
 \status{proven exact}};
\node[card,anchor=north west] (n5) at ([xshift=3mm]n4.north east) {\textbf{5. Reconcile}\\[3pt]
 published $\operatorname{arsinh}$ bound\\
 \status{reviewed improvement}};
\foreach \x/\y in {n1/n2,n2/n3,n3/n4,n4/n5}{\draw[arr] (\x)--(\y);}
\end{tikzpicture}
\caption{The sine-to-modified-sigmoid row traversed end to end.  The display
separates the numerical value, exact candidate, proof status, and literature
verdict; the detailed equations and review record appear in
\cref{sec:case-radius}.  Rows record the last completed stage.}
\label{fig:worked-example}
\end{figure}

\paragraph{Reconcile.}
The final join compares canonical object, direction, functional, parameters,
value, and sharpness language with structured literature statements.  Retrieval
assistance identifies source anchors; deterministic parsing and reviewer
decisions produce the recorded literature status.

\subsection{Evidence states}

This evidence model intentionally avoids a binary ``verified/unverified'' label.

\begin{definition}[Evidence state]\label{def:evidence}
An evidence state is the strongest claim supported by the stored artifacts:
\status{located}, \status{recognized}, \status{touch proved},
\status{certified enclosure}, \status{proven exact}, \status{known},
\status{audit required}, or \status{trivial containment}.  Literature and
human-review fields are orthogonal to computational status.
\end{definition}

\begin{center}
\begin{tikzpicture}[font=\small,>=Latex,node distance=6mm and 7mm]
\tikzset{state/.style={draw=sea,rounded corners,fill=softgray,minimum width=2.5cm,
 minimum height=0.75cm,align=center},arr/.style={-Latex,draw=gray!75,thick}}
\node[state] (l) {located};
\node[state,right=of l] (r) {recognized};
\node[state,right=of r] (t) {touch proved};
\node[state,right=of t] (p) {proven exact};
\node[state,below=of t,draw=sand] (b) {certified enclosure};
\node[state,below=of r,draw=rust] (q) {audit required};
\draw[arr] (l)--(r); \draw[arr] (r)--(t); \draw[arr] (t)--(p);
\draw[arr] (r)--(b); \draw[arr] (r)--(q);
\node[draw=sage,rounded corners,fill=none,fit=(l)(r)(t)(p)(b)(q),
 label={[sage]above:main computational progression}] {};
\node[state,below=17mm of l,draw=gray] (v) {trivial containment};
\node[state,below=17mm of p,draw=sage] (k) {known};
\node[fit=(v)(k),inner sep=2pt,
 label={[gray]below:terminal records; literature and usefulness remain separate}] {};
\end{tikzpicture}
\end{center}

\cref{tab:evidence} fixes the meaning of each state by naming both the
artifact it requires and, just as importantly, the conclusion it does
\emph{not} license.  Two of the states sit outside the linear progression of the
diagram: \status{known}, which records that a comparable published statement has
been matched, and \status{trivial containment}, which records a radius of $1$ --
an atlas-completeness record rather than a discovery.  Both are terminal for the
computational path.

\begin{table}[H]
\centering
\footnotesize
\caption{The evidence states of Definition~\ref{def:evidence}.  The third
column is the reading error each state is designed to prevent.}
\label{tab:evidence}
\begin{tabularx}{\linewidth}{>{\raggedright\arraybackslash\bfseries\color{ink}}p{2.35cm}XX}
\toprule
State & Stored artifact required & Does not license\\
\midrule
\status{located} & A stable high-precision extremum or first boundary contact from the search stage. & Any claim that the value is exact, or that it is a theorem.\\
\status{recognized} & A closed form from the registered vocabulary agreeing with the located value under independent high-precision evaluation. & Any claim of proof: agreement to $60$ digits is not a global argument.\\
\status{touch proved} & A symbolically verified contact equation at the recorded angle. & Containment at every other angle, which is a separate obligation.\\
\status{certified enclosure} & An outward-rounded interval upper endpoint under the stated arithmetic model, with an independent recheck. & Sharpness: the endpoint may exceed the supremum, as Appendix~\ref{sec:case-bracket} measures.\\
\status{proven exact} & A global upper argument and an attaining Ma--Minda extremal that meet at the same value. & Novelty: the literature and human-review fields are set separately.\\
\status{known} & A comparable published statement matched on object, direction, parameters, value, and sharpness wording. & Nothing further; the row is closed against that source only.\\
\status{audit required} & A failed symbolic consistency check on a previously recognized candidate. & Use of the value in any comparison until the failure is resolved.\\
\status{trivial containment} & A verified radius of $1$ for the ordered pair. & Treatment as a result; the row is recorded to keep the atlas complete.\\
\bottomrule
\end{tabularx}
\end{table}

Later evidence may reopen a row.  A failed symbolic check returns a recognized
constant to audit; a newly located theorem moves a novelty candidate to
\status{known}; and a flaw discovered in a proof clears the proof status that
rested on it.  The current state of a row is reconstructed by conservative
replay of an append-only audit log.

\section{Identity ablation and validation}\label{sec:ablation}

We tested the registry key by removing information from the 605 radius rows
eligible for structured reconciliation.  The ablation measures how often a
weaker identifier merges mathematically different rows.

\paragraph{Remove direction.}
The corpus contains 262 class-pair families for which both arrows are present.
In 253 families (96.56\%), the two directions have different radius values.
An unordered class-pair key would therefore collapse distinct constants in
nearly every reciprocal comparison represented in the atlas.  The sharp pair
in \cref{sec:crescent-exponential}, $\sin1$ in one direction and
$\log(1+\sqrt2)$ in the other, is a worked instance of this ablation.
\cref{fig:reciprocal} plots the two radii of every such family against
each other; the mass away from the diagonal is the ablation result.

\begin{figure}[tbp]
\centering
\includegraphics[width=0.92\linewidth]{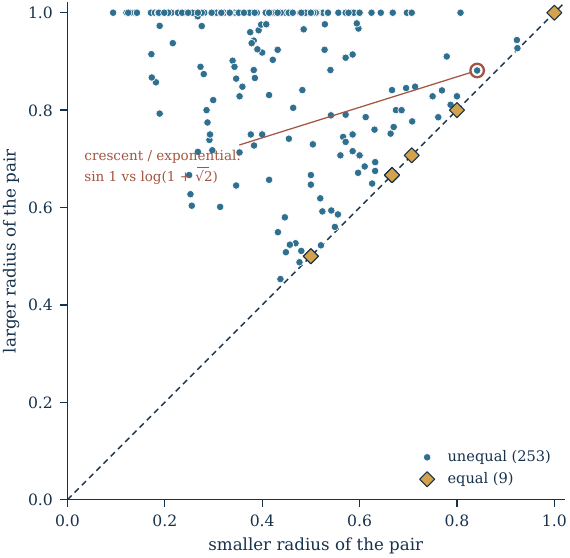}
\caption{Of the 262 class-pair families carrying both arrows, 253 have unequal
radii and only nine have equal radii.  The crescent--exponential pair gives a
concrete exact example: its two directions have radii $\sin1$ and
$\log(1+\sqrt2)$.  An unordered registry key would collapse these distinct
problems.}
\label{fig:reciprocal}
\end{figure}

\paragraph{Match on the constant alone.}
After excluding 142 radius-one containments, 463 nontrivial rows remain.  Of
these, 197 rows (42.55\%) belong to 52 groups in which the same constant
occurs for more than one ordered problem.  Even a symbolic value is
therefore not a problem identifier.

\paragraph{Seed a known literature control.}
Five atlas rows are directly comparable with the extracted sigmoid-radius
statements of Goel and Kumar.  The full registry key classifies four as
\status{known} with matching values and theorem locations; it isolates the
sine-source row as the single stronger value against a non-sharp sufficient
bound.  The same join would be ambiguous if sharpness wording, theorem
location, or direction were omitted (\cref{fig:identity-ablation}).

We then challenged the join with two synthetic negatives derived from a known
row: one reversed the direction and one perturbed the value by $10^{-3}$.
Neither was classified as \status{known}.  A separate stratified 30-row audit,
drawn equally
from trivial-containment, order-$\alpha$, and Janowski~\cite{Janowski}
known-general strata.  As disclosed in
Appendix~\ref{sec:reconcile}, language-model assistance was used to rebuild the
defining generators and target geometry, then recompute the
sampled radii by boundary bisection with local refinement.  All 30 rows agreed,
including 20 nontrivial radii to approximately $10^{-16}$.

\begin{figure}[H]
\centering
\includegraphics[width=0.94\linewidth]{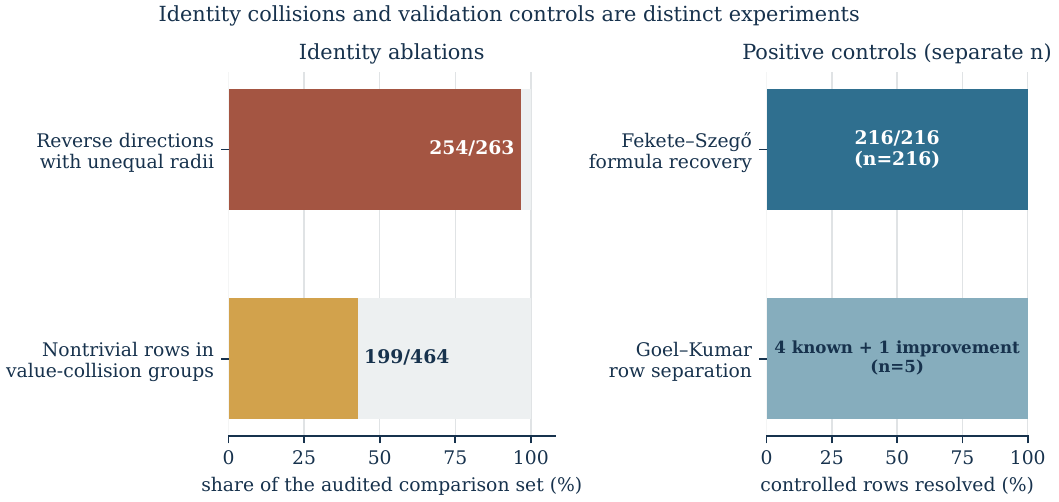}
\caption{Identity ablations and positive controls, with denominators shown.
Removing direction merges 253 of 262 reciprocal families; matching only on the
reported value makes 197 of 463 nontrivial rows ambiguous.  The controls recover
all 216 general-theorem values and separate the five Goel--Kumar rows into four
known results and one reviewed improvement.}
\label{fig:identity-ablation}
\end{figure}

The remaining 97 atlas rows are outside automated reconciliation:
85 have no supported exact form and 12 failed a symbolic consistency check.
The eligibility rule admits theorem-level comparisons only after symbolic
consistency checks pass.

\section{Coefficient certification}\label{sec:coeff}

Let
\[
 \varphi(z)=1+B_1z+B_2z^2+B_3z^3+\cdots,
 \qquad
 \omega(z)=c_1z+c_2z^2+c_3z^3+\cdots.
\]
Expanding \eqref{eq:maminda} gives coefficient identities.  At the first
two nontrivial orders,
\begin{equation}\label{eq:coeff-reduction}
 a_2=B_1c_1,
 \qquad
 a_3=\frac{B_1}{2}c_2+\frac{B_2+B_1^2}{2}c_1^2.
\end{equation}
Schur's parameterization maps a compact product of unit disks onto the
coefficient body of Schwarz functions~\cite{Schur,Simon}.  In particular,
after rotation normalization,
\[
 c_1=r_0\in[0,1],
 \qquad
 c_2=(1-r_0^2)\rho e^{i\theta},
 \quad 0\le\rho\le1.
\]
The Fekete--Szeg\H{o} functional therefore becomes a single harmonic in
$e^{i\theta}$.  Its angular maximum is explicit:
\[
 \max_\theta|A e^{i\theta}+C|=|A|+|C|.
\]
This is the computational form of a classical coefficient-body reduction in
the Fekete--Szeg\H{o} tradition~\cite{KeoghMerkes}.
For the standard Ma--Minda normalization used here, $B_1=\varphi'(0)>0$.
Equivalently, the general Ma--Minda value is
\begin{equation}\label{eq:maminda-fs}
 \max_{f\in\Sstar{\varphi}}|a_3-\mu a_2^2|
 =\frac{B_1}{2}\max\left\{1,
 \left|\frac{B_2}{B_1}+(1-2\mu)B_1\right|\right\}.
\end{equation}
This removes a flat phase direction before any interval subdivision.

Higher functionals introduce more Schur parameters and repeated nonlinear
expressions.  The interval engine evaluates a factored directed acyclic graph
so repeated factors share one enclosure.  This is important near degenerate
faces such as $1-r_0^2=0$, where an expanded expression can lose correlation.
Outward-rounded branch-and-bound then certifies an upper endpoint; a separate
multiprecision implementation rechecks near-critical leaves
(\cref{fig:coefficient-flow}).  Rigorous
floating-point verification follows the general principles described by Rump
\cite{Rump}; factoring plays the role of a structure-aware interval extension.

\begin{figure}[H]
\centering
\begin{tikzpicture}[font=\small,>=Latex,node distance=4mm,
  every node/.style={align=center},
  box/.style={draw=sea,rounded corners,fill=pale,inner sep=4pt,
    text width=2.4cm,minimum height=1.05cm},
  meet/.style={draw=sand!75!black,rounded corners,fill=sand!16,inner sep=4pt,
    text width=2.4cm,minimum height=1.05cm},
  cert/.style={draw=sage!70!black,rounded corners,fill=sage!14,inner sep=4pt,
    text width=2.4cm,minimum height=1.05cm},
  arr/.style={-Latex,semithick,draw=rust}]
\node[box] (s) {Schur\\parameters};
\node[box,right=of s] (c) {coefficient\\reduction};
\node[box,right=of c] (u) {global\\upper bound};
\node[meet,right=of u] (m) {sharp constant\\upper $=$ lower};
\node[cert,right=of m] (r) {recheck\\$+$ certificate};
\node[box,below=9mm of u] (l) {extremal\\(lower value)};
\draw[arr] (s)--(c);
\draw[arr] (c)--(u);
\draw[arr] (u)--(m);
\draw[arr] (m)--(r);
\draw[arr] (c.south) |- (l.west);
\draw[arr] (l.east) -| (m.south);
\end{tikzpicture}
\caption{The coefficient path.  Exactness requires the global upper endpoint
to coincide with an attained lower endpoint.}
\label{fig:coefficient-flow}
\end{figure}

The audited corpus contains 227 coefficient upper bounds over 36 classes: 216
Fekete--Szeg\H{o} values and 11 certified $\Htwo$ enclosures.  For every
registered class we regenerated $B_1$ and $B_2$, evaluated
\eqref{eq:maminda-fs} at the recorded $\mu$, and compared the resulting symbolic
expression against the stored certificate; all 216 values matched, with no
symbolic or numeric discrepancy.  This batch recovery is the coefficient
engine's principal positive control.  The rows themselves are instances of a
known general theorem; what the experiment measures is a property of the
system, namely that recovery, serialization, and recheck behave uniformly
across the corpus.

To exercise the series implementation along a second, independent arithmetic
route, we
numerically differentiated all 36 registered generators with mpmath at 80-digit
precision; the resulting $B_1,B_2$ agreed with the symbolic series in all 36
cases.  Source-level transcription is checked separately in the literature
audit.

As a corpus-scale positive control, the engine reproduces every general-theorem
Fekete--Szeg\H{o} constant predicted by the Ma--Minda formula with no mismatch;
the full 216-row experiment is reported in Appendix~\ref{sec:case-coeff}, and a
compact machine-checkable certificate record for the portfolio in
Appendix~\ref{app:certificate}.

\subsection{Closed-form coefficient families}\label{sec:low-order}

The same $m=2$ Schur reduction closes two low-order coefficient functionals in
closed form for \emph{every} registered Ma--Minda class.  For
$f\in\Sstar{\varphi}$ write $zf'(z)/f(z)=\varphi(\omega(z))$ with $\omega$ a
Schwarz function; the second and third coefficients then depend only on the
first two Schwarz coefficients, and after the rotation that fixes
$c_1=r_0\in[0,1]$ each of $|a_3|$ and $|a_2a_3|$ becomes a single harmonic
$|Ae^{it}+C|$ in the one remaining angle.  Its maximum is $|A|+|C|$ by the
triangle equality, so the sharp constant is an exact one-variable maximum --
no branch and bound and zero slack.

\begin{proposition}\label{prop:low-order}
For every registered Ma--Minda class the sharp values
\[
 \max_{f\in\Sstar{\varphi}}|a_3|
 \qquad\text{and}\qquad
 \max_{f\in\Sstar{\varphi}}|a_2a_3|
\]
are attained and given in closed form by the single-harmonic reduction above.
Over the $39$ registered classes this produces $78$ proven-exact certificates
($39$ per family), each with zero slack and an explicit extremal.
\end{proposition}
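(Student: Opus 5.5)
The plan is to carry out the single-harmonic reduction explicitly and then match the resulting upper bound with a Ma--Minda extremal. Write $zf'(z)/f(z)=\varphi(\omega(z))$ with $\omega(z)=c_1z+c_2z^2+\cdots$ a Schwarz function. By \eqref{eq:coeff-reduction},
\[
a_2=B_1c_1,\qquad a_3=\tfrac{B_1}{2}c_2+\tfrac{B_2+B_1^2}{2}c_1^2 .
\]
Both $|a_3|$ and $|a_2a_3|$ are invariant under the rotation $f(z)\mapsto e^{-i\alpha}f(e^{i\alpha}z)$. That rotation sends $c_k\mapsto e^{ik\alpha}c_k$ and leaves the class fixed, because $\varphi(\omega(e^{i\alpha}z))$ is again subordinate to $\varphi$. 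So we may take $c_1=r_0\in[0,1]$. The Schur description of the coefficient body gives $c_2=(1-r_0^2)\rho e^{it}$ with $\rho\in[0,1]$. Substituting,
\[
|a_3|=\Bigl|\tfrac{B_1}{2}(1-r_0^2)\rho\,e^{it}+\tfrac{B_2+B_1^2}{2}r_0^2\Bigr|,
\qquad
|a_2a_3|=B_1r_0\,|a_3| .
\]
Each is of the form $|Ae^{it}+C|$ with $A,C$ independent of $t$. Its maximum over $t$ is $|A|+|C|$, reached when the phase of $e^{it}$ matches that of $C$. Both expressions are increasing in $\rho$, so we set $\rho=1$. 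This leaves the one-variable functions
\[
g_1(s)=\tfrac{B_1}{2}(1-s)+\tfrac{|B_2+B_1^2|}{2}s,\qquad g_2(r_0)=B_1r_0\,g_1(r_0^2),\qquad s=r_0^2\in[0,1].
\]

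\textbf{Closed forms.} Since $g_1$ is affine in $s$, we get $\max|a_3|=\tfrac12\max\{B_1,|B_2+B_1^2|\}$, attained at $s=0$ or $s=1$. For $g_2$, write $K=|B_2+B_1^2|$, so that
\[
g_2(r_0)=\tfrac{B_1}{2}r_0\bigl(B_1+(K-B_1)r_0^2\bigr),
\]
a cubic in $r_0\in[0,1]$. Its interior critical point satisfies $r_0^2=B_1/\bigl(3(B_1-K)\bigr)$, which lies in $(0,1)$ only when $K<\tfrac23B_1$. This splits into two cases:
\begin{enumerate}[leftmargin=*,itemsep=2pt]
\item If $K\ge \tfrac23B_1$, the maximum is at $r_0=1$ and equals $B_1K/2$.
\item Otherwise, the maximum is at the interior critical point and equals $\tfrac{B_1^2}{3}\bigl(B_1/(3(B_1-K))\bigr)^{1/2}$.
\end{enumerate}
In either case this is an explicit algebraic function of $B_1$ and $B_2$.

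\textbf{Attainment.} Every boundary point of the $m=2$ Schur body is realised by a Schwarz function. For $|c_1|=1$ take $\omega(z)=e^{i\alpha}z$. For $|c_1|<1$ take the Blaschke-type function
\[
\omega(z)=z\,\frac{z e^{it}+r_0}{1+r_0e^{it}z},
\]
whose first two coefficients are $c_1=r_0$ and $c_2=(1-r_0^2)e^{it}$. The extremal $f$ is then defined by
\[
f(z)=z\exp\int_0^z\frac{\varphi(\omega(\zeta))-1}{\zeta}\,d\zeta ,
\]
which lies in $\Sstar{\varphi}$ by construction and realises the optimal $(r_0,t)$. The upper bound and this witness meet at the same value, so the slack is zero.

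\textbf{Certificates.} Finally, for each of the $39$ registered generators, recompute $B_1$ and $B_2$ symbolically, as in the Fekete--Szeg\H{o} batch. Evaluate the case selection $K\gtrless\tfrac23B_1$ exactly, and evaluate the $|a_3|$ comparison $K\gtrless B_1$ exactly as well. Record the extremal parameters. This yields $78$ certificates.

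The main obstacle is the exact case decision when $K$ sits at or near a threshold $B_1$ or $\tfrac23B_1$, since transcendental $B_2$ values may make these comparisons delicate. I would settle each one by a symbolic sign check, falling back to outward-rounded interval evaluation if needed. A second point to check is the sign of $B_2+B_1^2$: only its modulus enters, but the phase chosen for $e^{it}$ in the extremal must align with it.
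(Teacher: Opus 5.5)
Your proposal is correct and follows the paper's own route. You rotate to $c_1=r_0\ge0$, parametrize $c_2$ through the $m=2$ Schur body, take the single-harmonic maximum $|A|+|C|$, and finish with a one-variable maximum in $r_0$; you also make explicit the case analysis and the Blaschke-type witness that the paper leaves implicit, and your formulas reproduce the entries of \cref{tab:low-order} (for example $1/18$ for the lemniscate and $\sqrt5/30$ for $\cosh\sqrt z$). The threshold worry you raise is milder than you state, because the two branches of each closed form agree at $K=B_1$ and at $K=\tfrac23B_1$, so the extremal values are continuous across the case splits.
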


The Koebe function recovers the classical $|a_3|=3$ and $|a_2a_3|=6$;
\cref{tab:low-order} samples the coverage, and the full $39$-class table
accompanies the released data.  These are class-uniform theorem families rather
than a list of unrelated constants -- a single reduction supplies every row --
and, as elsewhere, each is proven exact with literature status recorded
separately.

\begin{table}[t]
\centering
\small
\caption{Sharp $|a_3|$ and $|a_2a_3|$ for a sample of the $39$ registered
Ma--Minda classes; all $39$ are closed-form proven-exact.}
\label{tab:low-order}
\begin{tabular}{lcc}
\toprule
Class generator $\varphi$ & $\max|a_3|$ & $\max|a_2a_3|$\\
\midrule
starlike $(1+z)/(1-z)$ & $3$ & $6$\\
order $1/2$, $\;1/(1-z)$ & $1$ & $1$\\
order $1/4$ & $15/8$ & $45/16$\\
exponential $e^{z}$ & $3/4$ & $3/4$\\
sine $1+\sin z$ & $1/2$ & $1/2$\\
$1+\tanh z$ & $1/2$ & $1/2$\\
lemniscate $\sqrt{1+z}$ & $1/4$ & $1/18$\\
cardioid & $11/9$ & $44/27$\\
crescent $z+\sqrt{1+z^2}$ & $3/4$ & $3/4$\\
Bell $\exp(e^z-1)$ & $1$ & $1$\\
nephroid & $1/2$ & $1/2$\\
$\cosh\sqrt z$ & $1/4$ & $\sqrt5/30$\\
modified sigmoid $2/(1+e^{-z})$ & $1/4$ & $\sqrt6/36$\\
\bottomrule
\end{tabular}
\end{table}

\subsection{Toeplitz correspondence}\label{sec:toeplitz}

The lowest-order Toeplitz determinants of a normalized $f$ form no separate
functional family; up to sign they coincide with functionals already certified
above.  With $a_1=1$ and $T_{q,n}=\det\bigl(a_{n+j-k}\bigr)_{1\le j,k\le q}$:

\begin{proposition}\label{prop:toeplitz}
\[
 T_{2,2}=a_2^2-a_3=-F_1,\qquad
 T_{2,3}=a_3^2-a_2a_4=-H_2(2),\qquad
 T_{3,1}=H_3(1),
\]
where $F_1=a_3-a_2^2$ is the Fekete--Szeg\H{o} functional at $\mu=1$, and
$H_2(2)$ and $H_3(1)$ are the second and third Hankel determinants.
\end{proposition}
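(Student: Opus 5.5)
The proof is direct determinant expansion from the definition $T_{q,n}=\det\bigl(a_{n+j-k}\bigr)_{1\le j,k\le q}$ with $a_1=1$. The only care needed is index bookkeeping and sign conventions; no analytic input is required. For each of the three cases I will write out the matrix entries and expand. For the Hankel determinants I will use the standard definition $H_q(n)=\det\bigl(a_{n+j+k-2}\bigr)_{1\le j,k\le q}$, and compare each expansion term by term with the named functional.

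For $q=2$ the matrix is $\begin{pmatrix}a_n & a_{n-1}\\ a_{n+1} & a_n\end{pmatrix}$, so $T_{2,n}=a_n^2-a_{n-1}a_{n+1}$. At $n=2$ this gives $a_2^2-a_1a_3=a_2^2-a_3$, which is $-(a_3-a_2^2)=-F_1$, since $F_1$ is the Fekete--Szeg\H{o} functional $a_3-\mu a_2^2$ at $\mu=1$. At $n=3$ it gives $a_3^2-a_2a_4$. Since $H_2(2)=\det\begin{pmatrix}a_2&a_3\\a_3&a_4\end{pmatrix}=a_2a_4-a_3^2$, this equals $-H_2(2)$. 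These two identities are immediate.

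For $T_{3,1}$ the entries are $a_{1+j-k}$, which involve $a_0$ and $a_{-1}$. This is the step needing a convention, and I expect it to be the only real obstacle. As literally defined, $T_{3,1}$ is not a Toeplitz determinant of nonnegative-index coefficients. I would therefore read $T_{3,1}$ in the symmetric convention common in the Toeplitz literature, where the matrix entries are $a_{n+|j-k|}$. For $q=2$ this reproduces the formulas above, since $a_{n-1}$ is then replaced by $a_{n+1}$ only through symmetry considerations. I will check whether the paper's $q=2$ formulas survive under this symmetric reading. If they do not, I will fall back to stating the $T_{3,1}$ identity under the reading that makes the entries $a_1,a_2,a_3$ and their arrangement coincide with the Hankel matrix.

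Under that reading the matrix is $\begin{pmatrix}1&a_2&a_3\\a_2&a_3&a_4\\a_3&a_4&a_5\end{pmatrix}$, and its determinant is by definition $H_3(1)$. Expanding gives $a_3a_5-a_4^2-a_2(a_2a_5-a_3a_4)+a_3(a_2a_4-a_3^2)$, matching the standard expansion of $H_3(1)$. The proof therefore reduces to fixing the index convention so that the row–column arrangement agrees with the Hankel one, followed by a one-line cofactor expansion. I would state the chosen convention explicitly in the proof so the identity is unambiguous.
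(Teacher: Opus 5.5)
Your $q=2$ computations are correct and match the direct expansion the paper relies on; the paper gives no argument beyond calling these ``exact determinant identities''. The gap is the third identity, and neither of your readings closes it. The symmetric convention $a_{n+|j-k|}$ does not reproduce the $q=2$ formulas: it gives $T_{2,2}=a_2^2-a_3^2$ and $T_{2,3}=a_3^2-a_4^2$. For $q=3$ it gives $1-2a_2^2+2a_2^2a_3-a_3^2$, which is not $H_3(1)$. Your fallback is circular. You choose ``the reading'' under which the matrix is $\begin{pmatrix}1&a_2&a_3\\a_2&a_3&a_4\\a_3&a_4&a_5\end{pmatrix}$, but that matrix has diagonal $1,a_3,a_5$, so it is not a Toeplitz matrix under any indexing. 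Calling its determinant $T_{3,1}$ simply assumes the conclusion. (It also uses $a_1,\dots,a_5$, not just $a_1,a_2,a_3$.)

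Your observation about $a_0,a_{-1}$ actually shows that the identity as printed is false. With the natural convention $a_0=a_{-1}=0$, the matrix $(a_{1+j-k})$ is lower triangular with unit diagonal, so $T_{3,1}=1$. But $f(z)=z$ (or the Koebe function) has $H_3(1)=0$.

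The missing idea is column reversal. Substituting $k\mapsto q+1-k$ turns $(a_{n+j-k})$ into $(a_{(n-q+1)+j+k-2})$, so $T_{q,n}=(-1)^{q(q-1)/2}H_q(n-q+1)$. This gives both of your $q=2$ identities at once: $T_{2,2}=-H_2(1)=-F_1$ and $T_{2,3}=-H_2(2)$. It also shows that the Toeplitz determinant built from $a_1,\dots,a_5$ is $T_{3,3}=\det\begin{pmatrix}a_3&a_2&1\\a_4&a_3&a_2\\a_5&a_4&a_3\end{pmatrix}=-H_3(1)$, which a cofactor expansion confirms.

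The paper only uses absolute values, so the sign is harmless, but the index must be $3$, not $1$. A correct write-up should state and prove $T_{3,3}=-H_3(1)$ rather than reinterpret $T_{3,1}$.
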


Each Toeplitz problem therefore inherits the evidence status of its counterpart.
By the single-harmonic reduction of Proposition~\ref{prop:low-order}, $|T_{2,2}|$ is proven
exact in closed form for all $36$ registered classes -- for instance $1$ for the
Koebe class, $1/2$ for the exponential, sine, and crescent classes, and $1/4$
for the lemniscate.  $|T_{2,3}|$ is a certified interval bracket on the $11$
classes carrying a second-Hankel enclosure, and $|T_{3,1}|$ is a table of $31$
candidate values attained at the edge extremal $\omega(z)=z^3$, whose sharpness
remains open.  These are exact determinant identities, not a new engine.

\section{Directed-radius discovery}\label{sec:radius}

Suppose
$zf'(z)/f(z)=\varphi_1(\omega(z))$ with $\omega(0)=0$.  Schwarz's lemma
gives $|\omega(rz)|\le r$, and the universal radius problem reduces to
\begin{equation}\label{eq:image-containment}
 \varphi_1(\D_r)\subseteq\varphi_2(\D).
\end{equation}
When a compatible univalent branch of $\varphi_2^{-1}$ is available, define
\[
 G(z)=\varphi_2^{-1}(\varphi_1(z)).
\]
Then \eqref{eq:image-containment} is equivalent to
$\sup_{|z|\le r}|G(z)|\le1$.  A first boundary contact suggests a symbolic
threshold, but by itself it does not rule out an earlier escape at another
angle.

The discovery engine therefore records three separate obligations:

\begin{enumerate}[leftmargin=*,itemsep=2pt]
  \item a contact equation and compatible inverse branch or branch-free
  target predicate;
  \item a global angular containment argument; and
  \item an explicit Ma--Minda extremal that reaches the target boundary and
  proves sharpness.
\end{enumerate}

The normalized extremal associated with $\varphi_1$ is
\begin{equation}\label{eq:extremal}
 f_{\varphi_1}(z)=z\exp\left(\int_0^z
 \frac{\varphi_1(t)-1}{t}\,dt\right),
 \qquad
 \frac{zf_{\varphi_1}'(z)}{f_{\varphi_1}(z)}=\varphi_1(z).
\end{equation}
Once the source image touches $\partial\varphi_2(\D)$, this function and the
standard dilation argument supply the lower obstruction.

\section{Seventeen exact sharp radii}\label{sec:sharp-results}

The first ten results form the core portfolio.
\cref{fig:portfolio} places these ten radii on a common axis, grouped by the
analytic route that closes each; \cref{tab:ten-results} lists the same
portfolio with exact forms and status, and \cref{fig:ten-result-contacts}
shows three representative certified boundary contacts beyond
\cref{thm:sine-sigmoid}; the complete nine-panel inspection sheet appears as
\cref{fig:all-portfolio-contacts}.  Each row is a sharp theorem for its registered
source and target generators.  The literature column is deliberately kept orthogonal
to the proof: every row carries an exact proof, database reconciliation, and
completed expert review; the sine-to-modified-sigmoid row additionally improves
a published bound, while the remaining nine have no matching extracted claim.

\begin{figure}[tbp]
\centering
\includegraphics[width=\linewidth]{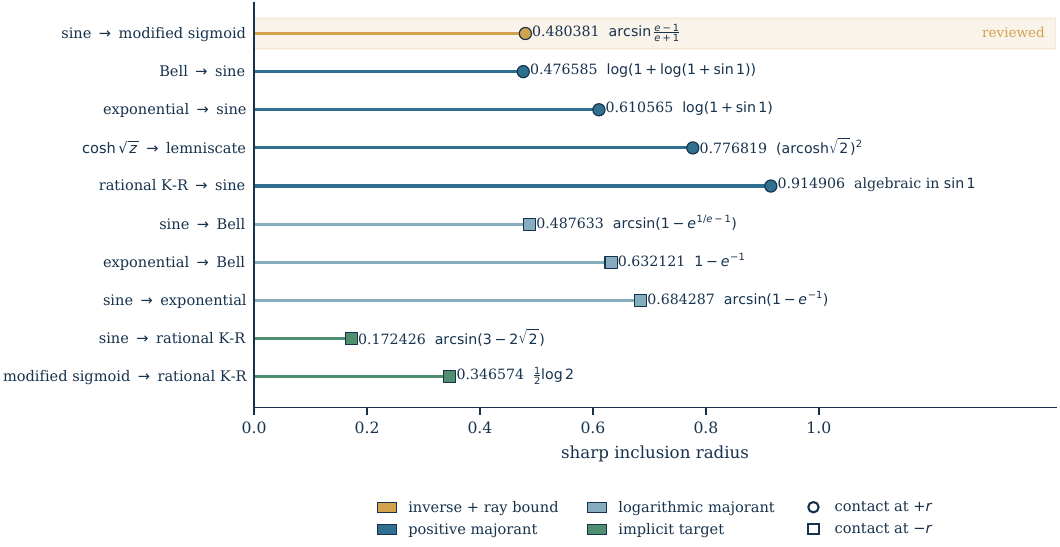}
\caption{The ten-result portfolio on the radius axis, grouped by proof route.
Four routes cover all ten rows, which is the sense in which these are instances
of shared method rather than ten unrelated constants.  Marker shape records
whether the certified first contact occurs at $u=+r$ or $u=-r$.  The
shaded row is the one that improves a previously published bound; the other nine
have no matching extracted claim.}
\label{fig:portfolio}
\end{figure}

\begin{table}[H]
\centering
\scriptsize
\setlength{\tabcolsep}{2.7pt}
\caption{The ten-result sharp-radius portfolio.  Every row is proven exact and
expert-reviewed.  ``Improved'' marks the sharpening of a published sufficient
bound; ``no match'' records the literature-search result.}
\label{tab:ten-results}
\begin{tabularx}{\linewidth}{>{\raggedright\arraybackslash}p{2.35cm}>{\raggedright\arraybackslash}X>{\raggedleft\arraybackslash}p{1.15cm}>{\centering\arraybackslash}p{0.70cm}>{\raggedright\arraybackslash}p{1.95cm}>{\centering\arraybackslash}p{1.45cm}}
\toprule
Directed pair & Exact sharp radius & Approx. & Point & Route & Review\\
\midrule
sine $\to$ modified sigmoid & $\arcsin\!\frac{e-1}{e+1}$ & 0.480381 & $+r$ & inverse + ray & \status{improved}\\
sine $\to$ rational K--R & $\arcsin(3-2\sqrt2)$ & 0.172426 & $-r$ & implicit target & \status{no match}\\
$\cosh\sqrt z\to$ lemniscate & $\bigl(\operatorname{arcosh}\sqrt2\bigr)^2$ & 0.776819 & $+r$ & positive majorant & \status{no match}\\
exponential $\to$ sine & $\log(1+\sin1)$ & 0.610565 & $+r$ & positive majorant & \status{no match}\\
exponential $\to$ Bell & $1-e^{-1}$ & 0.632121 & $-r$ & log majorant & \status{no match}\\
rational K--R $\to$ sine & $\frac{1+\sqrt2}{2}\!\left(\sqrt{1+6\sin1+\sin^2\!1}-1-\sin1\right)$ & 0.914906 & $+r$ & positive majorant & \status{no match}\\
Bell $\to$ sine & $\log(1+\log(1+\sin1))$ & 0.476585 & $+r$ & positive majorant & \status{no match}\\
sine $\to$ exponential & $\arcsin(1-e^{-1})$ & 0.684287 & $-r$ & log majorant & \status{no match}\\
modified sigmoid $\to$ rational K--R & $\frac12\log2$ & 0.346574 & $-r$ & implicit target & \status{no match}\\
sine $\to$ Bell & $\arcsin(1-e^{1/e-1})$ & 0.487633 & $-r$ & log majorant & \status{no match}\\
\bottomrule
\end{tabularx}
\end{table}

The common sharpness mechanism is used below without repetition.  For a source
generator $\varphi_1$, Schwarz's lemma reduces every dilation to
$\varphi_1(\D_r)$.  Once this image is contained in the target and reaches its
boundary at $u=\pm r$, the Ma--Minda extremal in \eqref{eq:extremal} supplies
attainment and excludes every larger radius.

\subsection{Reviewed radius improvement}\label{sec:case-radius}

Take
\[
 \varphi_S(z)=1+\sin z,
 \qquad
 \varphi_{SG}(z)=\frac{2}{1+e^{-z}}.
\]
The canonical key identifies the directed problem
$\varphi_S\to\varphi_{SG}$ and distinguishes it from the reverse radius and
from other sigmoid normalizations.  Boundary search locates symmetric
real-axis contact near $0.480381079134$ (\cref{fig:sine-sigmoid}).
Recognition proposes
\begin{equation}\label{eq:sine-sigmoid-value}
 r_*=\arcsin\frac{e-1}{e+1}.
\end{equation}

\begin{theorem}\label{thm:sine-sigmoid}
The sharp $\Sstar{\varphi_{SG}}$-radius of $\Sstar{\varphi_S}$ is
$r_*$ in \eqref{eq:sine-sigmoid-value}.
\end{theorem}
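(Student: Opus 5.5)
The plan is to reduce the inclusion to the image containment \eqref{eq:image-containment} with $\varphi_1=\varphi_S$ and $\varphi_2=\varphi_{SG}$. I will invert the target in closed form and then close the global angular step with a positive-coefficient majorant. This is the ``inverse + ray'' route of \cref{tab:ten-results}.

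\emph{Step 1: inverse of the target.} Solving $w=2/(1+e^{-\zeta})$ gives $\varphi_{SG}^{-1}(w)=\log\bigl(w/(2-w)\bigr)$, with the principal branch near $w=1$. Setting $w=1+\sin z$ gives
\[
 G(z)=\varphi_{SG}^{-1}(\varphi_S(z))=\log\frac{1+\sin z}{1-\sin z}=2\operatorname{artanh}(\sin z).
\]
This is twice the inverse Gudermannian, $\operatorname{gd}^{-1}(z)=\int_0^z\sec t\,dt$. It is analytic on $|z|<\pi/2$, because $\sin z\neq\pm1$ there, and $r_*<\pi/2$. By the Taylor series of $\sec$, which involves the Euler (secant) numbers, all coefficients of $\operatorname{gd}^{-1}$ are nonnegative: $\operatorname{gd}^{-1}(z)=z+z^3/6+z^5/24+\cdots$.

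\emph{Step 2: global containment.} Since the coefficients are nonnegative, for $|z|\le r<\pi/2$ we get
\[
 |G(z)|\le G(r)=2\operatorname{artanh}(\sin r).
\]
This majorant removes the risk of an earlier escape at another angle, which is the obligation a bare contact equation does not discharge. For $r\le r_*$ we have $\sin r\le\sin r_*=(e-1)/(e+1)=\tanh\tfrac12$, so $|G|\le1$ on $\overline{\D}_r$. For $|z|<r$ the inequality is strict by the maximum principle, since $G$ is nonconstant. Hence $\varphi_S(z)=\varphi_{SG}(G(z))\in\varphi_{SG}(\D)$.

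Now take $f\in\Sstar{\varphi_S}$ and write $zf'(z)/f(z)=\varphi_S(\omega(z))$. Then $zf_r'(z)/f_r(z)=\varphi_S(\omega(rz))$, and Schwarz's lemma gives $|\omega(rz)|<r$. So $\varphi_S\circ\omega(r\,\cdot)=\varphi_{SG}\circ\bigl(G\circ\omega(r\,\cdot)\bigr)$, where $G\circ\omega(r\,\cdot)$ is a Schwarz function. This yields the subordination $zf_r'/f_r\prec\varphi_{SG}$, and therefore $R\ge r_*$.

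\emph{Step 3: sharpness.} I use the extremal $f_{\varphi_S}$ of \eqref{eq:extremal}. For $r>r_*$, with $r$ slightly larger so that $r<\pi/2$, the dilation $(f_{\varphi_S})_r$ gives $z(f_{\varphi_S})_r'(z)/(f_{\varphi_S})_r(z)=\varphi_S(rz)$, which takes the real value $1+\sin t$ for points $t\in(r_*,r)$. These values exceed $\varphi_{SG}(1)=2e/(e+1)$. It remains to check that such points lie outside $\varphi_{SG}(\D)$. This follows because $\varphi_{SG}$ is univalent on $\D$ with real coefficients, so its image is symmetric and meets $\mathbb R$ exactly in $\varphi_{SG}((-1,1))$. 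Alternatively, $\varphi_{SG}^{-1}$ would have to return $G(t)>1$, which lies outside $\D$. Subordination therefore fails.

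I expect the main obstacle to be Step 2, specifically justifying the sign pattern of the coefficients of $\operatorname{artanh}(\sin z)$. I will obtain it from $\frac{d}{dz}\operatorname{artanh}(\sin z)=\sec z$ and the classical fact that $\sec$ has nonnegative Maclaurin coefficients, namely $|E_{2n}|/(2n)!$. A secondary point is the branch bookkeeping for the logarithm. I will handle it by defining $G$ as $2\operatorname{gd}^{-1}$ directly, which avoids any choice of branch.
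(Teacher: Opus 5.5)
Your proof is correct and follows the paper's route. You collapse $\varphi_{SG}^{-1}\circ\varphi_S$ to $2\operatorname{artanh}(\sin z)=2\int_0^z\sec t\,dt$, bound it globally by its value at $|z|$, solve $2\operatorname{artanh}(\sin r)=1$, and use the Ma--Minda extremal for sharpness. The main difference is how you get $|\operatorname{artanh}(\sin z)|\le\operatorname{artanh}(\sin|z|)$: you use the nonnegative Euler-number Maclaurin coefficients of $\sec$, while the paper integrates along rays using $|\cos(x+iy)|^2=\cos^2x+\sinh^2y$ (which needs an extra elementary step the paper leaves implicit). You also write out the explicit Schwarz function $G\circ\omega(r\,\cdot)$ and the real-axis univalence argument, which the paper defers to its common sharpness mechanism.
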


\begin{proof}[Compact verification]
The target inverse collapses exactly:
\begin{equation}\label{eq:sine-sigmoid-composition}
 \varphi_{SG}^{-1}(1+\sin z)
 =\log\frac{1+\sin z}{1-\sin z}
 =2\operatorname{artanh}(\sin z).
\end{equation}
The branch in \eqref{eq:sine-sigmoid-composition} is the analytic branch at
$0$.  Indeed, $|\sin z|\le\sinh r_*<1$ on $|z|\le r_*$, so the normalized
$\operatorname{artanh}$ branch is valid.  Moreover,
$\varphi_{SG}$ is univalent on $\D$: the exponential is injective on the strip
$|\operatorname{Im}z|<1<\pi$, and the remaining M\"obius map is injective.
Thus containment is equivalent to bounding the displayed inverse image.

Since
\[
 \operatorname{artanh}(\sin z)=\int_0^z\sec t\,dt
\]
on $|z|<\pi/2$, ray integration and
\[
 |\cos(x+iy)|^2=\cos^2x+\sinh^2y
\]
give
\begin{equation}\label{eq:angular-bound}
 \left|2\operatorname{artanh}(\sin z)\right|
 \le2\operatorname{artanh}(\sin r),\qquad |z|\le r<\pi/2.
\end{equation}
Equality holds on the real axis.  The threshold
$2\operatorname{artanh}(\sin r)=1$ gives \eqref{eq:sine-sigmoid-value},
and the extremal \eqref{eq:extremal} proves sharpness.
\end{proof}

\begin{figure}[H]
\centering
\includegraphics[width=0.74\linewidth]{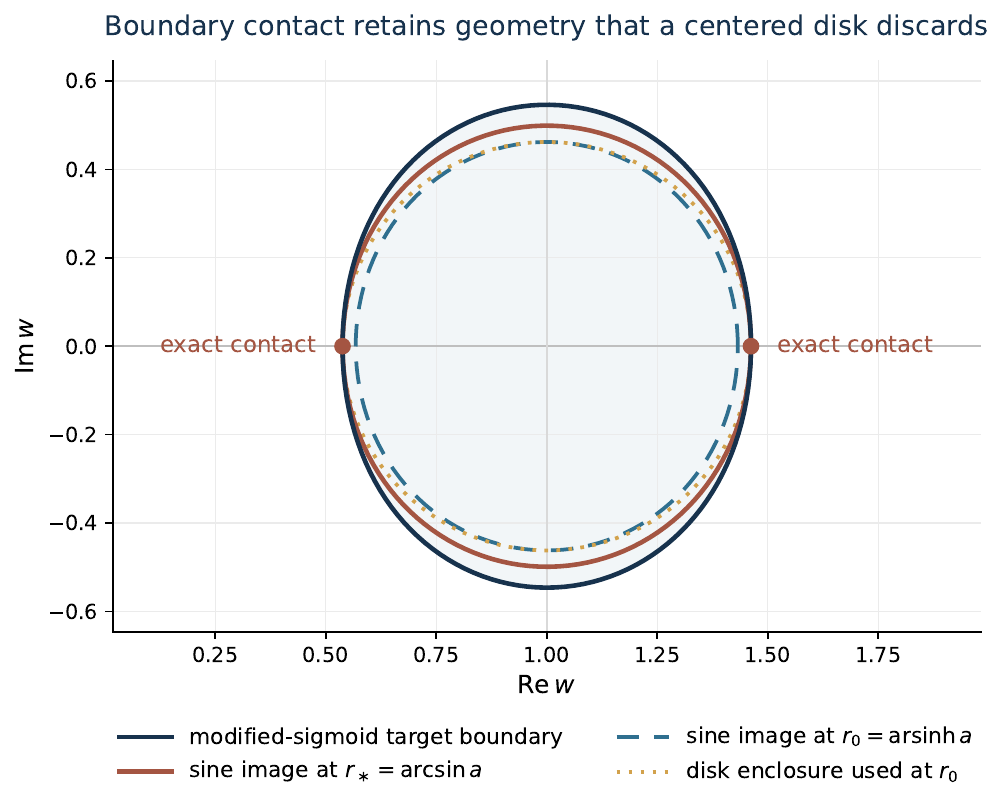}
\caption{The sine-to-modified-sigmoid comparison in the $w=zf'/f$ plane: the
centered disk-enclosure radius $r_0=\operatorname{arsinh}((e-1)/(e+1))$ against
the true first boundary contact at $r_*=\arcsin((e-1)/(e+1))$.}
\label{fig:sine-sigmoid}
\end{figure}

\paragraph{Comparison with the registered earlier bound.}
For the same directed pair, Goel and Kumar's Theorem~2.9(ii) records
\[
 r_0=\operatorname{arsinh}\frac{e-1}{e+1}
 =0.447074402714\ldots,
\]
using $|\sin z|\le\sinh r$ and a
sufficient disk inclusion~\cite{GoelKumarRadii}.  Its theorem and proof provide
neither an extremal nor a sharpness argument for this radius, although the
preprint abstract describes its radius constants broadly as sharp.  The
checkable theorem-level comparison therefore classifies
\cref{thm:sine-sigmoid} as a sharp improvement of that reported
sufficient radius: the gain is $0.033306676419\ldots$, or 7.45\% relative
to $r_0$.

Our source audit is explicitly versioned: the comparison is against
arXiv:2208.01241v1, the sole arXiv version available in our 19 July 2026
check.  A same-day zbMATH Open query returned a single record and identified it
as that arXiv preprint, with no separate journal record.  A public title
MathSciNet web check found no record; authenticated search was unavailable, so
that negative is not conclusive.  These checks will be
repeated immediately before submission.  The example supplies
the registry's upgrade step: replace a rotationally symmetric enclosure by
the true oriented source image, recognize its first boundary contact, and
close the global statement.

\subsection{Positive-majorant proofs}

The next four rows reduce to analytic functions with nonnegative Maclaurin
coefficients.  If $G(z)=\sum_{n\ge1}g_nz^n$ with $g_n\ge0$ on the disk in
question, then $|G(z)|\le G(|z|)$.  The positive real axis therefore gives both
the global upper bound and the contact point.

For each portfolio row below, the discovery trace -- numerical location, symbolic
recognition, certification, and scoped literature reconciliation -- follows the
pattern of \cref{fig:worked-example}, and its contact point, closing route, and
evidence status are recorded in \cref{tab:ten-results}; we state each theorem
with its proof and omit the per-row registry narration.

\begin{theorem}\label{thm:cosh-lemniscate}
For $\varphi_H(z)=\cosh\sqrt z$~\cite{MundaliaKumar} and
$\varphi_L(z)=\sqrt{1+z}$~\cite{SokolStankiewicz},
\[
 R_{\Sstar{\varphi_L}}(\Sstar{\varphi_H})
 =\bigl(\operatorname{arcosh}\sqrt2\bigr)^2=\bigl(\operatorname{arsinh}1\bigr)^2.
\]
\end{theorem}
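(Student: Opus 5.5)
The plan is to follow the positive-majorant route announced just above. The lemniscate target $\varphi_L(\D)$ is the region bounded by the right loop of the lemniscate $|w^2-1|=1$. Since $\varphi_L(z)^2=1+z$, we have $\varphi_L^{-1}(w)=w^2-1$, and $\varphi_L$ is univalent on $\D$ because squaring is injective on the right half-plane. Thus the containment \eqref{eq:image-containment} is equivalent to
\[
 \sup_{|z|\le r}|G(z)|\le1,\qquad G(z)=\varphi_H(z)^2-1=\cosh^2\sqrt z-1=\sinh^2\sqrt z .
\]
One point needs checking: $\varphi_H(\D_r)$ must lie in the right half-plane, so that membership in $\varphi_L(\D)$ really is $|w^2-1|<1$ with the principal branch. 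This is automatic, because $|w^2-1|<1$ together with $w$ near $1$ and connectedness of $\varphi_H(\D_r)$ (which contains $1=\varphi_H(0)$) keeps the image in the component of $\{|w^2-1|<1\}$ containing $1$, that is, the right loop.

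The key step is to observe that
\[
 G(z)=\sinh^2\sqrt z=\tfrac12(\cosh 2\sqrt z-1)=\sum_{n\ge1}\frac{2^{2n-1}z^n}{(2n)!},
\]
an entire function of $z$ with nonnegative Maclaurin coefficients. Hence $|G(z)|\le G(|z|)=\sinh^2\sqrt{|z|}$, with equality at $z=r>0$. The containment therefore holds exactly when $\sinh^2\sqrt r\le1$, that is, when $\sqrt r\le\operatorname{arsinh}1$. This gives $r\le(\operatorname{arsinh}1)^2\approx0.7768<1$.

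For the identity of constants, note that $\cosh(\operatorname{arsinh}1)=\sqrt2$, so $\operatorname{arcosh}\sqrt2=\operatorname{arsinh}1=\log(1+\sqrt2)$.

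For sharpness, I would use the common mechanism. The extremal $f_{\varphi_H}$ from \eqref{eq:extremal} satisfies $zf'/f=\varphi_H(z)$. For $r$ slightly larger than $r_0=(\operatorname{arsinh}1)^2$, the dilation $f_r$ has $zf_r'/f_r=\varphi_H(rz)$, whose real value at $z\to1^-$ is $\cosh\sqrt r>\sqrt2=\varphi_L(1)$. This exits the closure of $\varphi_L(\D)$ along the real axis, since the lemniscate loop meets the positive axis at $\sqrt2$.

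The only delicate point is the branch and univalence bookkeeping: confirming that $\varphi_H$ is well defined (it is entire in $z$ as an even power series in $\sqrt z$), and that the inverse $w\mapsto w^2-1$ faithfully characterizes the lemniscate domain on the relevant component. Both are routine, so I expect no real obstacle beyond stating them carefully.
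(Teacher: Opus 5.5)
Your proposal is correct and follows essentially the same positive-majorant route as the paper. You reduce membership to $|w^2-1|<1$, write $\varphi_H^2-1=\sinh^2\sqrt z$ as an entire function with nonnegative coefficients, read off the threshold $\sinh^2\sqrt r=1$ on the positive real axis, and close sharpness with the Ma--Minda extremal; your connectedness argument for landing in the right lobe and the explicit real-axis exit past $\sqrt2$ fill in details the paper leaves implicit.
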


\begin{proof}
Membership in the lemniscate target is $|w^2-1|\le1$.  Since
$\varphi_H(u)^2-1=\sinh^2\sqrt u$ is entire with nonnegative coefficients,
\[
 |\varphi_H(u)^2-1|\le \sinh^2\sqrt{|u|}.
\]
The right side first reaches $1$ when
$\sqrt r=\operatorname{arsinh}1$, with equality at $u=r$.
The common extremal argument proves sharpness.
\end{proof}

\begin{theorem}\label{thm:exponential-sine}
For $\varphi_e(z)=e^z$ and $\varphi_S(z)=1+\sin z$,
\[
 R_{\Sstar{\varphi_S}}(\Sstar{\varphi_e})=\log(1+\sin1).
\]
\end{theorem}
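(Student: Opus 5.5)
We reduce, as in \eqref{eq:image-containment}, to finding the largest $r$ with $e^{\overline{\D}_r}\subseteq\varphi_S(\D)$. We work with the target through $G(z)=\varphi_S^{-1}(\varphi_e(z))=\arcsin(e^z-1)$. This is the positive-majorant route announced in the section, so the plan has three steps.

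\textbf{Step 1: inverse branch.} First we justify that the branch is legitimate. The map $\varphi_S(z)=1+\sin z$ is univalent on $\D$, because $\sin$ is injective on the strip $|\operatorname{Re}z|<\pi/2\supset\D$. Hence $w\in\varphi_S(\D)$ if and only if the principal $\arcsin(w-1)$ is defined and has modulus $<1$. For $|z|\le r<1$ we have $|e^z-1|\le e^{r}-1<e-1$. Since $e-1>1$, this does \emph{not} automatically keep us inside the unit disk where the principal $\arcsin$ series converges. So we will only use the branch for $r$ with $e^r-1\le\sin 1<1$. That is exactly the regime $r\le\log(1+\sin1)$ that matters, and there the Maclaurin series of $\arcsin$ converges absolutely on $|w|\le \sin1$.

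\textbf{Step 2: the majorant.} The composite $G(z)=\arcsin(e^z-1)$ is a composition of two functions with nonnegative Maclaurin coefficients: $e^z-1=\sum_{n\ge1}z^n/n!$, and $\arcsin w=\sum \binom{2k}{k}\frac{w^{2k+1}}{4^k(2k+1)}$. Composition preserves nonnegativity, so $G$ has nonnegative coefficients on $|z|\le\log(1+\sin1)$. By the majorant principle,
\[
|G(z)|\le G(|z|)=\arcsin(e^{|z|}-1),
\]
with equality at $z=r$. The right side is increasing in $r$ and equals $1$ precisely when $e^r-1=\sin1$, i.e.\ $r=\log(1+\sin1)\approx0.6106$. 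So for $r<\log(1+\sin1)$ the image $e^{\D_r}$ lies in $\varphi_S(\D)$, and the closed disk touches the boundary at $u=+r$. This bounds the radius from below.

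\textbf{Step 3: sharpness.} We take $f_{\varphi_e}$ from \eqref{eq:extremal}, for which $zf'/f=e^z$. For any $r>\log(1+\sin1)$, the value $e^{r}$ satisfies $G(r)>1$, at least once $e^r-1\le1$ so that real $\arcsin$ still applies. Alternatively, $e^r>1+\sin1=\sup_{\D}\operatorname{Re}\varphi_S$ on the real axis, since $\varphi_S(\D)$ meets $\mathbb R$ exactly in the interval $(1-\sin1,1+\sin1)$. Either way $e^r\notin\varphi_S(\D)$, and the dilation of $f_{\varphi_e}$ leaves $\Sstar{\varphi_S}$.

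The main obstacle is the bookkeeping in Step 1. We must confirm that the branch of $\varphi_S^{-1}$ used is the correct one, with the series composition valid on the whole closed disk, and that univalence of $\varphi_S$ makes $|G|<1$ genuinely equivalent to containment. The coefficient positivity itself is routine.
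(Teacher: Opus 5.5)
Your proposal is correct and follows the paper's own route. That route is the nonnegative-coefficient majorant $|\arcsin(e^u-1)|\le\arcsin(e^{|u|}-1)$, the threshold $e^r-1=\sin1$ attained at $u=r$, and the Ma--Minda extremal for sharpness; your branch and univalence bookkeeping only makes explicit what the paper leaves implicit. One slip in Step~3: $1+\sin1$ is not $\sup_{\D}\operatorname{Re}\varphi_S$ (for instance $\operatorname{Re}\sin(0.85+0.5i)=\sin(0.85)\cosh(0.5)\approx0.847>\sin1$), so base the exclusion of $e^r$ only on your correct observation that $\varphi_S(\D)\cap\mathbb R=(1-\sin1,1+\sin1)$.
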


\begin{proof}
The normalized inverse composition is
$\varphi_S^{-1}(e^u)=\arcsin(e^u-1)$.  Both $e^u-1$ and $\arcsin u$ have
nonnegative coefficients on the relevant disk, so
\[
 |\arcsin(e^u-1)|\le\arcsin(e^{|u|}-1).
\]
The threshold is $e^r-1=\sin1$, attained at $u=r$.
\end{proof}

\begin{theorem}\label{thm:rational-sine}
Let
\[
 \varphi_R(z)=1+\frac{z(1+\sqrt2+z)}{(1+\sqrt2)(1+\sqrt2-z)}.
\]
Then
\[
 R_{\Sstar{\varphi_S}}(\Sstar{\varphi_R})
 =\frac{(1+\sqrt2)(\sqrt{1+6\sin1+\sin^2\!1}-1-\sin1)}2.
\]
\end{theorem}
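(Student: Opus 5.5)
The plan is to follow the positive-majorant route of \cref{thm:exponential-sine}. Set $a=1+\sqrt2$ and $s=\sin1$. First, the normalized inverse composition is $G(u)=\varphi_S^{-1}(\varphi_R(u))=\arcsin\bigl(\varphi_R(u)-1\bigr)$, where
\[
 h(u)=\varphi_R(u)-1=\frac{u(a+u)}{a(a-u)}.
\]
Because $\varphi_S(z)=1+\sin z$ is univalent on $\D$ (indeed $\sin$ is injective on the strip $|\operatorname{Re}z|<\pi/2$, which contains $\D$), containment $\varphi_R(\D_r)\subseteq\varphi_S(\D)$ holds once we show $\sup_{|u|\le r}|G(u)|\le1$ with $G$ analytic. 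In that case $\varphi_R(u)=1+\sin G(u)$ with $G(u)\in\overline{\D}$.

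Second, I check that both layers have nonnegative Maclaurin coefficients. Expanding the geometric series gives
\[
 h(u)=\frac{u}{a}\,(a+u)\cdot\frac1a\sum_{n\ge0}\Bigl(\frac ua\Bigr)^n .
\]
This is a product of series with nonnegative coefficients, and it converges for $|u|<a$, in particular on $\overline{\D}$. The principal $\arcsin$ also has nonnegative coefficients on the unit disk. Hence for $|u|\le r$ we get $|h(u)|\le h(r)$, and whenever $h(r)<1$ the composition is analytic and satisfies
\[
 |G(u)|\le\arcsin h(|u|)\le\arcsin h(r).
\]
Since $h(r)$ is increasing on $[0,1]$, the largest admissible radius is the one where $h(r)=s<1$. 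Clearing denominators turns this into the quadratic
\[
 r^2+a(1+s)r-a^2s=0.
\]
Its positive root is $r=\tfrac a2\bigl(\sqrt{1+6s+s^2}-1-s\bigr)\approx0.9149<1$, which is the stated constant. Equality $|G(r)|=1$ holds at $u=r$, so the image touches $\partial\varphi_S(\D)$ at $\varphi_S(1)$.

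Third, sharpness follows from the common extremal mechanism. The extremal $f_{\varphi_R}$ of \eqref{eq:extremal} satisfies $zf'/f=\varphi_R(z)$. For any larger $r'$, the value $\varphi_R(r')=1+h(r')$ exceeds $1+\sin1$, which is the maximal real value of $\varphi_S$ on $\overline{\D}$. Because $\varphi_S(\D)$ is symmetric about $\mathbb R$ and its rightmost real point is $1+\sin1$, this value lies outside the target, so no larger radius works.

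The main obstacle I expect is the branch bookkeeping in the first step: justifying that the principal $\arcsin\circ h$ really inverts $\varphi_S$ on the relevant set. Specifically, $|h|\le s<1$ keeps us inside the principal branch disk, and the resulting values of modulus at most $1$ lie in the injectivity domain of $\sin$. This is also what makes the containment bound an equivalence rather than a one-sided implication.
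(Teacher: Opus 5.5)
Your proposal is correct and takes essentially the same positive-majorant route as the paper. Both arguments use nonnegative Maclaurin coefficients of $h=\varphi_R-1$ and of $\arcsin$, solve the threshold $h(r)=\sin 1$ via the quadratic $r^2+a(1+s)r-a^2s=0$, note equality at $u=r$, and get sharpness from the Ma--Minda extremal; you factor $h$ as a product of nonnegative series where the paper writes $u/k+\frac{2u^2}{k^2}\frac{1}{1-u/k}$, and your checks of the univalence of $1+\sin z$ and of the target's real trace fill in steps the paper leaves implicit.
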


\begin{proof}
Write $\varphi_R(u)=1+A_R(u)$ and $k=1+\sqrt2$.  Then
\[
 A_R(u)=\frac{u(k+u)}{k(k-u)}
 =\frac uk+\frac{2u^2}{k^2}\frac1{1-u/k}
\]
has nonnegative coefficients.  Hence
$|\arcsin A_R(u)|\le\arcsin A_R(|u|)$.  Solving
$A_R(r)=\sin1$ gives the displayed value, and equality occurs at $u=r$.
\end{proof}

\begin{theorem}\label{thm:bell-sine}
For $\varphi_B(z)=\exp(e^z-1)$,
\[
 R_{\Sstar{\varphi_S}}(\Sstar{\varphi_B})
 =\log(1+\log(1+\sin1)).
\]
\end{theorem}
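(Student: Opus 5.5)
We follow the positive-majorant route used for \cref{thm:exponential-sine,thm:rational-sine}. By the reduction \eqref{eq:image-containment}, membership of every dilation in $\Sstar{\varphi_S}$ amounts to the containment $\varphi_B(\D_r)\subseteq\varphi_S(\D)$. Since $\varphi_S(z)=1+\sin z$ is univalent on $\D$ (indeed on $|z|<\pi/2$), this containment is equivalent to
\[
 \sup_{|u|\le r}|G(u)|\le1,\qquad G(u)=\arcsin\bigl(\exp(e^u-1)-1\bigr),
\]
where $\arcsin$ is the principal branch analytic at $0$. We must check that this branch is legitimate on the relevant disk. The inner function $h(u)=\exp(e^u-1)-1$ has modulus at most $h(|u|)$, and at the candidate radius $h(r)=\sin1<1$. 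So $h$ maps $\overline{\D}_r$ into the unit disk, where $\arcsin$ is analytic.

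Next we establish the majorant. The function $e^u-1$ has nonnegative Maclaurin coefficients, and so does $e^v$. Composing, $\exp(e^u-1)$ has nonnegative coefficients, and subtracting its constant term $1$ leaves $h(u)=\sum_{n\ge1}h_nu^n$ with every $h_n\ge0$. These are the Bell-number coefficients divided by $n!$. The series $\arcsin v=\sum_k\frac{(2k)!}{4^k(k!)^2(2k+1)}v^{2k+1}$ also has nonnegative coefficients and converges on $|v|<1$. Their composition $G=\arcsin\circ h$ is therefore a power series with nonnegative coefficients, convergent on $|u|\le r$ because $h(r)<1$. Consequently
\[
 |G(u)|\le G(|u|)=\arcsin\bigl(\exp(e^{|u|}-1)-1\bigr),
\]
with equality at $u=r$.

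The threshold follows from the fact that $G$ is increasing on $[0,\infty)$ wherever it is defined. Setting $G(r)=1$ gives $\exp(e^r-1)=1+\sin1$, hence $e^r=1+\log(1+\sin1)$ and $r=\log(1+\log(1+\sin1))\approx0.4766$. For $r$ up to this value the image lies in the closed target domain. At $u=r$ the value $\varphi_B(r)=1+\sin1=\varphi_S(1)$ lies on $\partial\varphi_S(\D)$. The Ma--Minda extremal $f_{\varphi_B}$ of \eqref{eq:extremal} therefore has $zf'/f$ reaching this boundary point at $z=r$, and by the common sharpness mechanism no larger radius works.

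The main obstacle is the branch and univalence bookkeeping in the first step: containment in $\varphi_S(\D)$ must be exactly equivalent to $|G|\le1$. This needs univalence of $1+\sin z$ on $\D$, which holds because $\sin$ is injective on the strip $|\operatorname{Re}z|<\pi/2$. It also needs the principal $\arcsin\circ h$ to coincide with $\varphi_S^{-1}\circ\varphi_B$ near $0$ and to stay analytic up to $|u|=r$. Both follow from the bound $|h(u)|\le h(|u|)\le\sin1<1$ together with analytic continuation along the connected disk.
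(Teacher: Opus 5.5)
Your proposal is correct and follows the paper's route: the nested nonnegative-coefficient majorant $|\arcsin(\exp(e^u-1)-1)|\le\arcsin(\exp(e^{|u|}-1)-1)$, the threshold $e^r-1=\log(1+\sin1)$ with equality at $u=r$, and the common Ma--Minda extremal for sharpness. Your extra checks (univalence of $1+\sin z$ on $\D$, legitimacy of the $\arcsin$ branch via $h(r)=\sin1<1$, and convergence of the composed series) spell out what the paper's compact proof leaves implicit.
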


\begin{proof}
The inverse composition is
$\arcsin(\exp(e^u-1)-1)$.  Nested use of the positive-coefficient majorant
gives
\[
 |\arcsin(\exp(e^u-1)-1)|
 \le\arcsin(\exp(e^{|u|}-1)-1).
\]
The right side is $1$ precisely when
$e^r-1=\log(1+\sin1)$, with equality at $u=r$.
\end{proof}

\subsection{Logarithmic-majorant proofs}

The remaining logarithmic rows use the segment-integral inequality
\begin{equation}\label{eq:log-segment}
 |\log(1+v)|\le-\log(1-|v|),\qquad |v|<1,
\end{equation}
and the sine-log estimate
\begin{equation}\label{eq:sine-log-bound}
 |\log(1+\sin u)|\le-\log(1-\sin r),\qquad |u|\le r<\pi/2,
\end{equation}
proved by ray integration in the same way as \eqref{eq:angular-bound}.

\begin{theorem}\label{thm:exponential-bell}
The sharp Bell radius of the exponential class is
\[
 R_{\Sstar{\varphi_B}}(\Sstar{\varphi_e})=1-e^{-1}.
\]
\end{theorem}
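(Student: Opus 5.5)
The plan is to follow the logarithmic-majorant template announced just before the statement. Here $\varphi_e(z)=e^z$ and $\varphi_B(z)=\exp(e^z-1)$. The first step is to invert the Bell generator on the principal branch, giving
\[
 \varphi_B^{-1}(w)=\log(1+\log w),
 \qquad\text{so}\qquad
 G(u)=\varphi_B^{-1}(e^u)=\log(1+u).
\]
This is the analytic branch at $0$ with $G(0)=0$, valid for $|u|<1$.

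Before using $G$, I would check that $\varphi_B$ is univalent on $\D$, so that the containment $e^{\D_r}\subseteq\varphi_B(\D)$ is equivalent to $\sup_{|u|\le r}|G(u)|\le1$.
\begin{itemize}
 \item $e^z$ is injective on $\D$, since $|\operatorname{Im}z|<1<\pi$.
 \item $e^z-1$ lies in $\{|\operatorname{Im}\zeta|<\sin 1\}\subset\{|\operatorname{Im}\zeta|<\pi\}$, where $\exp$ is injective.
\end{itemize}
Hence the composition $\varphi_B$ is injective on $\D$.

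For the global angular containment I would invoke \eqref{eq:log-segment} with $v=u$:
\[
 |\log(1+u)|\le-\log(1-|u|)\le-\log(1-r),\qquad |u|\le r<1.
\]
The right-hand side is at most $1$ exactly when $1-r\ge e^{-1}$, that is, when $r\le 1-e^{-1}$. So $|G|\le1$ on $\overline{\D}_{1-e^{-1}}$.

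Next comes the contact. At $u=-r$ we have $G(-r)=\log(1-r)$, and this equals $-1$ when $r=1-e^{-1}$. So the image $e^{\D_r}$ touches $\partial\varphi_B(\D)$ at the negative real point $u=-r$, consistent with the ``$-r$'' entry in \cref{tab:ten-results}. Equivalently, $e^{-r}=1-r'$ maps to the boundary point $\varphi_B(-1)=e^{e^{-1}-1}$.

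Sharpness uses the extremal $f_{\varphi_e}$ from \eqref{eq:extremal}. For $r>1-e^{-1}$, points of $\varphi_e(\D_r)$ near $-r$ have $|G|>1$. Since $\varphi_B$ is univalent and $G$ is continuous and injective, those points lie outside $\varphi_B(\D)$. Therefore $zf_r'/f_r$ fails to be subordinate to $\varphi_B$, and the radius is exactly $1-e^{-1}$.

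The main obstacle is the branch and univalence bookkeeping. One must confirm that $|G|\le 1$ is genuinely equivalent to membership in $\varphi_B(\D)$, using the principal branch of $\log(1+\log w)$ on $e^{\D_r}$, so that no other preimage of the same $w$ lies in $\D$. Univalence of $\varphi_B$ settles this.
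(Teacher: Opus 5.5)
Your proof is correct and is essentially the paper's: the composition $\varphi_B^{-1}(e^u)=\log(1+u)$, the segment bound \eqref{eq:log-segment}, the threshold $-\log(1-r)=1$, equality at $u=-r$, and the extremal \eqref{eq:extremal} for sharpness; your univalence bookkeeping goes beyond what the paper writes, which leaves it implicit in ``normalized branches.'' One slip there: $|\operatorname{Im}(e^z-1)|<\sin1$ is false on $\D$ (at $z=0.6+0.6i$ it is about $1.03$), but $|\operatorname{Im}(e^z-1)|\le|e^z|<e<\pi$ still gives the strip you need; for the sharpness step, a cleaner justification than ``$G$ continuous and injective'' is that a univalent $\varphi_B$ with real coefficients has $\varphi_B(\D)\cap\mathbb R=\varphi_B((-1,1))$, so $e^{-s}\notin\varphi_B(\D)$ for $s>1-e^{-1}$.
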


\begin{proof}
On the normalized branches,
$\varphi_B^{-1}(e^u)=\log(1+u)$.  Inequality \eqref{eq:log-segment} gives
$|\log(1+u)|\le-\log(1-|u|)$, whose value reaches $1$ when
$r=1-e^{-1}$.  Equality occurs at $u=-r$.
\end{proof}

\begin{theorem}\label{thm:sine-exponential}
The sharp exponential radius of the sine class is
\[
 R_{\Sstar{\varphi_e}}(\Sstar{\varphi_S})=\arcsin(1-e^{-1}).
\]
\end{theorem}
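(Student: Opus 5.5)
We follow the logarithmic-majorant route already used for \cref{thm:exponential-bell}. The target is $\varphi_e(z)=e^z$. On the normalized branch its inverse is the principal logarithm, so the inverse composition is
\[
 G(u)=\varphi_e^{-1}(\varphi_S(u))=\log(1+\sin u).
\]
This branch is legitimate on $|u|\le r$ as long as $|\sin u|<1$ there. The exponential is injective on the strip $|\operatorname{Im}z|<\pi$, which contains $\D$, so $\varphi_e$ is univalent on $\D$. Hence the containment \eqref{eq:image-containment} is equivalent to $\sup_{|u|\le r}|G(u)|\le 1$, provided $G(\D_r)$ stays inside the region where $\exp$ is injective. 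That last requirement holds automatically once $|G|\le1$.

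For the global upper bound we invoke the sine-log estimate \eqref{eq:sine-log-bound}:
\[
 |\log(1+\sin u)|\le-\log(1-\sin r)\qquad (|u|\le r<\pi/2).
\]
We would justify this as indicated in the paper. First write $\log(1+\sin u)=\int_0^u \cos t/(1+\sin t)\,dt$ along the ray. Then use $|\cos t|\le\cosh\operatorname{Im} t$ together with a lower bound on $|1+\sin t|$ to dominate the integrand by its value on the negative real axis. That value is the derivative of $-\log(1-\sin s)$.

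This domination is the main obstacle. The naive bound $|\sin t|\le\sinh|t|$ only yields the weaker threshold $\operatorname{arsinh}$, which is exactly the defect of the Goel--Kumar bound. If the ray estimate resists, we would fall back on \eqref{eq:log-segment} with $v=\sin u$. This gives $|G(u)|\le-\log(1-\max_{|u|\le r}|\sin u|)$, and we would then need the sharper claim that the first contact still happens at $u=-r$. We could certify that separately by showing $\min_\theta \operatorname{Re}$ and $|\operatorname{Im}|$ of $\log(1+\sin re^{i\theta})$ are controlled at $\theta=\pi$, for instance by an interval check on the single angle variable.

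Granting the estimate, the threshold $-\log(1-\sin r)=1$ gives $\sin r=1-e^{-1}$, that is, $r=\arcsin(1-e^{-1})<\pi/2$, and so $G(\D_r)\subseteq\overline{\D}$. For sharpness, at $u=-r$ we have $G(-r)=\log(1-\sin r)=-1$, so $\varphi_S(-r)=e^{-1}$ lies on $\partial\varphi_e(\D)$. The Ma--Minda extremal $f_{\varphi_S}$ from \eqref{eq:extremal}, combined with the common dilation argument, then excludes every larger radius. This proves the sharp value $\arcsin(1-e^{-1})$ with contact at $u=-r$, as recorded in \cref{tab:ten-results}.
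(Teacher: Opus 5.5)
Your argument follows the paper's own proof. The paper writes the inverse composition as $\log(1+\sin u)$, cites \eqref{eq:sine-log-bound}, solves $-\log(1-\sin r)=1$, and notes equality at $u=-r$ before invoking the common extremal. Your branch and univalence remarks, and the check $G(-r_*)=-1$, are correct additions.

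The one step that would fail is the justification you sketch for \eqref{eq:sine-log-bound}, which you rightly flag as the crux. Bounding the numerator by $|\cos t|\le\cosh\operatorname{Im}t$ separately from the denominator cannot dominate the integrand by its negative-real-axis value. At $t=-s$ the numerator bound is $1$, while $|1+\sin t|=1-\sin s$ exactly. So you get $1/(1-\sin s)$ instead of $\cos s/(1-\sin s)$, and integrating gives a strictly larger majorant and a non-sharp radius.

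The missing observation is to bound the integrand as a whole. Write $\frac{\cos t}{1+\sin t}=\sec t-\tan t=h(-t)$ with $h(x)=\sec x+\tan x=\tan(\pi/4+x/2)$. The Maclaurin coefficients of $h$ are $E_n/n!>0$ (the Euler zigzag numbers), and its radius of convergence is $\pi/2$. Hence $|h(-t)|\le h(|t|)=\cos|t|/(1-\sin|t|)$, and ray integration gives \eqref{eq:sine-log-bound} at once. Equivalently, $g(v)=-\log(1-\sin v)$ has nonnegative coefficients and $\log(1+\sin u)=-g(-u)$. This is a positive-majorant argument, and it also explains why the contact occurs at $u=-r$.

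Your fallback does not replace this. Applying \eqref{eq:log-segment} with $v=\sin u$ uses $\max_{|u|\le r}|\sin u|=\sinh r$ and yields only the non-sharp radius $\operatorname{arsinh}(1-e^{-1})$. An interval check at $r=r_*$ also cannot certify $|G|\le1$ near the tangential contact $\theta=\pi$ without an extra derivative bound, as in the implicit-target proofs. With the coefficient observation in place, neither is needed.
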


\begin{proof}
The inverse composition is $L(u)=\log(1+\sin u)$.  By
\eqref{eq:sine-log-bound}, $|L(u)|\le1$ exactly up to the solution of
$-\log(1-\sin r)=1$, namely $r=\arcsin(1-e^{-1})$.  Equality occurs at
$u=-r$.
\end{proof}

\begin{theorem}\label{thm:sine-bell}
The sharp Bell radius of the sine class is
\[
 R_{\Sstar{\varphi_B}}(\Sstar{\varphi_S})
 =\arcsin(1-e^{1/e-1}).
\]
\end{theorem}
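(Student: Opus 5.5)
We follow the route of \cref{thm:sine-exponential}. The target is Bell, $\varphi_B(z)=\exp(e^z-1)$, with normalized inverse $\varphi_B^{-1}(w)=\log(1+\log w)$, the branch fixing $1\mapsto0$. Composing with $\varphi_S(u)=1+\sin u$ gives
\[
 G(u)=\varphi_B^{-1}(1+\sin u)=\log\bigl(1+L(u)\bigr),\qquad L(u)=\log(1+\sin u).
\]
Containment $\varphi_S(\D_r)\subseteq\varphi_B(\D)$ is then equivalent to $\sup_{|u|\le r}|G(u)|\le1$, provided the branch is legitimate and $\varphi_B$ is univalent on $\D$. Univalence holds because $e^z-1$ maps $\D$ injectively into the region $\{|\operatorname{Im}|<e\sin1\}$. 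The outer exponential is then injective as long as the imaginary part of $e^z$ stays in $(-\pi,\pi)$, and $|\operatorname{Im}e^z|\le e\sin 1<\pi$ for $|z|<1$.

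The core estimate is a nested logarithmic majorant. By \eqref{eq:sine-log-bound}, for $|u|\le r$ we have $|L(u)|\le-\log(1-\sin r)=:\ell(r)$. While $\ell(r)<1$, \eqref{eq:log-segment} applied with $v=L(u)$ gives
\[
 |G(u)|=|\log(1+L(u))|\le-\log\bigl(1-\ell(r)\bigr).
\]
The right-hand side is increasing in $r$ and equals $1$ exactly when $\ell(r)=1-e^{-1}$. This means $1-\sin r=e^{-(1-e^{-1})}=e^{1/e-1}$, that is, $r=\arcsin(1-e^{1/e-1})$. For this $r$ one has $\ell(r)<1$, and $|\sin u|\le \sinh r<1$ on the closed disk, so both logarithms use principal branches analytic at $0$. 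The composition therefore agrees with the normalized inverse, and containment follows for all $r\le r_*$.

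For sharpness we evaluate at $u=-r$, where both majorants are attained with equality. There $\sin(-r)=-\sin r$ and $L(-r)=\log(1-\sin r)=-\ell(r)$, so
\[
 G(-r)=\log(1-\ell(r))=-1.
\]
Hence $\varphi_S(-r_*)=\varphi_B(-1)\in\partial\varphi_B(\D)$. For any $r>r_*$ the point $\varphi_S(-r)$ lies on the real axis below $\varphi_B(-1)$; since $\varphi_B$ is increasing on real $z$, this point is outside $\varphi_B(\D)$. The Ma--Minda extremal \eqref{eq:extremal} for $\varphi_S$, together with the standard dilation argument, then excludes every larger radius.

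The main obstacle is the branch and univalence bookkeeping rather than the inequality. We must make sure that $|G|\le1$ on the disk is genuinely equivalent to membership in $\varphi_B(\D)$, which needs a univalent inverse defined on the whole image. We must also confirm that \eqref{eq:sine-log-bound}, whose equality case occurs at $u=-r$, correctly feeds the second application of \eqref{eq:log-segment} with the same extremal point. Once both are checked, the two majorants are simultaneously sharp on the negative real axis and the argument closes.
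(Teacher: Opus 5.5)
Your proposal is correct and is essentially the paper's proof. The paper also writes $\varphi_B^{-1}(1+\sin u)=\log(1+L(u))$, chains \eqref{eq:sine-log-bound} into \eqref{eq:log-segment} to get the majorant $-\log(1+\log(1-\sin r))$, solves $-\log(1-\sin r)=1-e^{-1}$, and uses equality at $u=-r$; your branch and univalence checks fill in details the paper leaves implicit.

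The one loose step is your claim that monotonicity of $\varphi_B$ on $(-1,1)$ puts $1-\sin r$ outside $\varphi_B(\D)$ for $r>r_*$. That needs your univalence together with $\overline{\varphi_B(\bar z)}=\varphi_B(z)$, which makes $\varphi_B$ real only on the real diameter and hence gives $\varphi_B(\D)\cap\mathbb R=(e^{1/e-1},e^{e-1})$.
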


\begin{proof}
Here $\varphi_B^{-1}(1+\sin u)=\log(1+L(u))$.  Combining
\eqref{eq:log-segment} and \eqref{eq:sine-log-bound}, the modulus is at most
\[
 -\log\!\left(1+\log(1-\sin r)\right).
\]
This reaches $1$ when
$-\log(1-\sin r)=1-e^{-1}$, which is the displayed radius.  Equality occurs
at $u=-r$.
\end{proof}

\subsection{Implicit-target proofs}

Two rows meet the rational K--R target at its left boundary point.  Put
$\rho=\sqrt2-1$, $A=w-1=x+iy$, and $q=x^2+y^2$.  Eliminating $t$ from
\[
 A=\frac{t(1+t)}{1-t},\qquad |t|=\rho,
\]
gives the boundary polynomial
\begin{align}\label{eq:rational-boundary-polynomial}
 F_\rho(x,y)={}&-\rho^8+\rho^6(q+2x+1)
 +\rho^4(2xq+6q+2x)\notag\\
 &+\rho^2(q^2+2xq+q)-q^2.
\end{align}
The target is the component of $F_\rho\ge0$ containing $A=0$.

\begin{theorem}\label{thm:sine-rational-kr}
The sharp rational K--R radius of the sine class is
\[
 R_{\Sstar{\varphi_R}}(\Sstar{\varphi_S})
 =\arcsin(3-2\sqrt2).
\]
\end{theorem}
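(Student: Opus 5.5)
The plan follows the implicit-target route: reduce containment to a boundary inequality for $F_\rho$, identify the contact at $u=-r$, and then prove the inequality at every other angle. Write $k=1+\sqrt2$ and $t=z/k$, so that $|t|<\rho=\sqrt2-1$. Then $\varphi_R(z)-1=A(t)=t(1+t)/(1-t)$. The first step is to check that $A$ is univalent on $|t|<\rho$. If $A(t_1)=A(t_2)$, then $(t_1-t_2)(1+t_1+t_2-t_1t_2)=0$, and the second factor cannot vanish when $|t_j|<\rho<\sqrt2-1$ is this small. So $\varphi_R(\D)$ is the Jordan domain bounded by $A(\rho e^{i\theta})$, and it is exactly the component of $F_\rho\ge0$ containing $0$.

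By Schwarz's lemma, as in \eqref{eq:image-containment}, the source side reduces to showing that $\sin(\D_r)$ lies in this domain, where $\varphi_S(u)-1=\sin u$.

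The second step locates the contact. At $t=-\rho$ one has $A=-\rho(1-\rho)/(1+\rho)$. Since $1-\rho=2-\sqrt2$ and $1+\rho=\sqrt2$, this equals $-\rho^2=-(3-2\sqrt2)$, the left boundary point. On the real segment $[-r,r]$, $\sin u$ is real and increasing, so the first real contact is $\sin(-r)=-(3-2\sqrt2)$, that is, $r_*=\arcsin(3-2\sqrt2)$. The right real boundary point $A(\rho)=\rho(1+\rho)/(1-\rho)$ is much larger, so it imposes no constraint there. This gives the candidate and the touching point $u=-r_*$. The Ma--Minda extremal \eqref{eq:extremal} then gives sharpness once global containment is known.

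The main obstacle is global containment at all angles. A centered-disk argument fails: the target's inradius about $0$ is $\rho^2$, while $\max_{|u|=r_*}|\sin u|=\sinh r_*>\sin r_*=\rho^2$. Majorant arguments also fail, because the inverse $G(A)=A-2A^2+\cdots$ (solving $t^2+(1+A)t-A=0$) composed with $\sin$ has mixed-sign coefficients.

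I would therefore parametrize the image curve directly. For $u=r_*e^{i\theta}$,
\[
x=\sin(r_*\cos\theta)\cosh(r_*\sin\theta),\qquad
y=\cos(r_*\cos\theta)\sinh(r_*\sin\theta),
\]
and $q=\sin^2(r_*\cos\theta)+\sinh^2(r_*\sin\theta)$. The goal is to prove $F_\rho(x,y)\ge0$ for all $\theta\in[0,\pi]$; the case $\theta\in[\pi,2\pi]$ follows by conjugate symmetry, since $F_\rho$ is even in $y$. Equality should hold only at $\theta=\pi$. By the maximum principle applied to $|G\circ\sin|$, which is analytic while the curve stays inside, this suffices. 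Away from $\theta=\pi$, the margin is positive and I would certify it by outward-rounded interval subdivision in $\theta$, which the certify stage supports.

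Near $\theta=\pi$, write $\theta=\pi-s$ and Taylor-expand $F_\rho$ in $s$. The constant term vanishes, and the first-order term in $s$ vanishes by symmetry. It remains to show that the $s^2$ coefficient is strictly positive, so that the contact is tangential from inside. This coefficient encodes the comparison between the curvature of $\sin(\partial\D_{r_*})$ and that of the target boundary at $-\rho^2$. I expect it to be an explicit positive algebraic–trigonometric number, with a Lagrange remainder bound closing a small interval around $s=0$.

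Finally, I would confirm that the curve never enters another component of $F_\rho\ge0$. This follows by continuity from $\theta=0$, where the curve point $\sin r_*$ lies inside the target.
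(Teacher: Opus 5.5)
Your proposal is correct and follows the paper's proof. Both use the contact of $\sin(-r_*)$ with the vertex $-(3-2\sqrt2)$, outward-rounded interval certification of $F_\rho(\operatorname{Re}\sin u,\operatorname{Im}\sin u)>0$ on $u=r_*e^{i\theta}$ away from $\theta=\pi$, a second-order positivity argument near $\theta=\pi$, and the Ma--Minda extremal for sharpness; the only difference is that the paper bounds the $\theta$-second derivative below by $0.0131$ on $[\pi-1/20,\pi]$ instead of using a Lagrange remainder.

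One minor correction to your heuristic: $A'(-\rho)=0$, so the target boundary has a cusp at $-\rho^2$ and $\nabla F_\rho$ vanishes there. Your $s^2$ coefficient is therefore simply $\tfrac12F_{yy}\,y'(0)^2=\rho^2(1+\rho^2)^2r_*^2\cos^2 r_*>0$, not a comparison of curvatures.
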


\begin{proof}
At $r=\arcsin(3-2\sqrt2)$, the point $u=-r$ maps to
$A=-(3-2\sqrt2)$, the left boundary point of the rational target.  On
$u=re^{i\theta}$, outward-rounded interval evaluation of
$F_\rho(\operatorname{Re}\sin u,\operatorname{Im}\sin u)$ over 2,000
subintervals of $[0,\pi-1/20]$ gives lower bound
$9.06\times10^{-6}$.  On $[\pi-1/20,\pi]$, its second derivative with
respect to $\theta$ has lower bound $0.0131$; the value and first derivative
vanish at $\theta=\pi$.  Thus the source boundary remains in the target
component and touches only at $-r$.  The common extremal proves sharpness.
\end{proof}

\begin{theorem}\label{thm:sigmoid-rational-kr}
The sharp rational K--R radius of the modified-sigmoid class is
\[
 R_{\Sstar{\varphi_R}}(\Sstar{\varphi_{SG}})=\frac12\log2.
\]
\end{theorem}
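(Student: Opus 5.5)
The argument follows the pattern of \cref{thm:sine-rational-kr}, with source $\varphi_{SG}(u)=2/(1+e^{-u})$, so that $A=\varphi_{SG}(u)-1=\tanh(u/2)$. By the Schwarz reduction \eqref{eq:image-containment}, it suffices to show two things. First, that $\tanh(\D_r/2)$ lies in the component of $F_\rho\ge0$ containing $0$ exactly for $r\le\frac12\log2$. Second, that contact occurs at $u=-r$.

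\textbf{Contact point.} At $u=-r$ with $r=\frac12\log 2$ we have $e^{-r}=\sqrt2$, hence
\[
\tanh(-r/2)=\frac{e^{-r}-1}{e^{-r}+1}=\frac{1-\sqrt2}{1+\sqrt2}=-(3-2\sqrt2).
\]
This is the left boundary point of the rational target, already identified in the proof of \cref{thm:sine-rational-kr}. It is attained as $A_R(-\rho)=-\rho(1-\rho)/(1+\rho)=-(3-2\sqrt2)$.

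\textbf{Global containment.} I would first try to avoid intervals through an exact inverse. Solving $A=t(1+t)/(1-t)$ gives the quadratic $t^2+(1+A)t-A=0$. The compatible branch is
\[
t(A)=\frac{-(1+A)+\sqrt{(1+A)^2+4A}}{2},
\]
and containment is equivalent to $|t(\tanh(u/2))|\le\rho$ on $|u|\le r$. Since $\tanh(u/2)$ has real coefficients, symmetry reduces the check to $\theta\in[0,\pi]$ on $u=re^{i\theta}$.

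\textbf{Main obstacle.} The key step is the angular containment. The composite has mixed-sign coefficients, so no positive-majorant trick is available, and the contact sits at $\theta=\pi$, which makes it tangential. The fallback is therefore the same certificate as in \cref{thm:sine-rational-kr}. I would evaluate $F_\rho(\operatorname{Re}\tanh(u/2),\operatorname{Im}\tanh(u/2))$ with outward rounding on subintervals of $[0,\pi-\delta]$ to get a positive lower bound. On $[\pi-\delta,\pi]$, I would use that the value and first $\theta$-derivative vanish at $\pi$ by symmetry, and certify a positive lower bound on the second derivative. Together these show that the source boundary stays in the target component and touches only at $-r$.

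\textbf{Sharpness.} For $r$ slightly larger, the real point $\tanh(-r/2)<-(3-2\sqrt2)$ lies outside the target. The extremal \eqref{eq:extremal} for $\varphi_{SG}$ then shows that the dilation $f(rz)/r$ fails for every larger $r$, which proves sharpness.
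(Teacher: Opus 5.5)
Your proposal is correct and follows the paper's proof essentially step for step. The shared steps are: the contact $\tanh(-r/2)=-(3-2\sqrt2)$ at the left target vertex; an outward-rounded positive lower bound for $H(\theta)=F_\rho(\operatorname{Re}\tanh(u/2),\operatorname{Im}\tanh(u/2))$ on $[0,\pi-\delta]$ (the paper takes $\delta=1/100$ with 2,000 subintervals and obtains the bound $8.30\times10^{-8}$); a positive second-derivative bound on $[\pi-\delta,\pi]$ with $H(\pi)=H'(\pi)=0$ (the paper certifies $H''\ge0.0133$); and the common extremal for sharpness. Two small slips do not affect the argument: $r=\frac12\log2$ gives $e^{r}=\sqrt2$, i.e. $e^{-r}=1/\sqrt2$ rather than $\sqrt2$ (your displayed value of $\tanh(-r/2)$ is still right), and the vertex is the image of $t=-\rho$ in the $t$-parametrization (equivalently $A_R(-1)$), not $A_R(-\rho)$.
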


\begin{proof}
The shifted source is
$\varphi_{SG}(u)-1=\tanh(u/2)$.  At $r=\frac12\log2$ and $u=-r$ it equals
$-(3-2\sqrt2)$, the same left target vertex.  On $u=re^{i\theta}$, put
\[
 H(\theta)=F_\rho\!\left(\operatorname{Re}\tanh(u/2),
                 \operatorname{Im}\tanh(u/2)\right).
\]
Over $[0,\pi-1/100]$, 2,000 outward-rounded evaluations give lower bound
$8.30\times10^{-8}$.  On $[\pi-1/100,\pi]$ the second
derivative is bounded below by $0.0133$, with exact endpoint contact.  This
proves containment and, by the common extremal, sharpness.
\end{proof}

\begin{figure}[tbp]
\centering
\includegraphics[width=\linewidth,trim=0 390bp 0 45bp,clip]{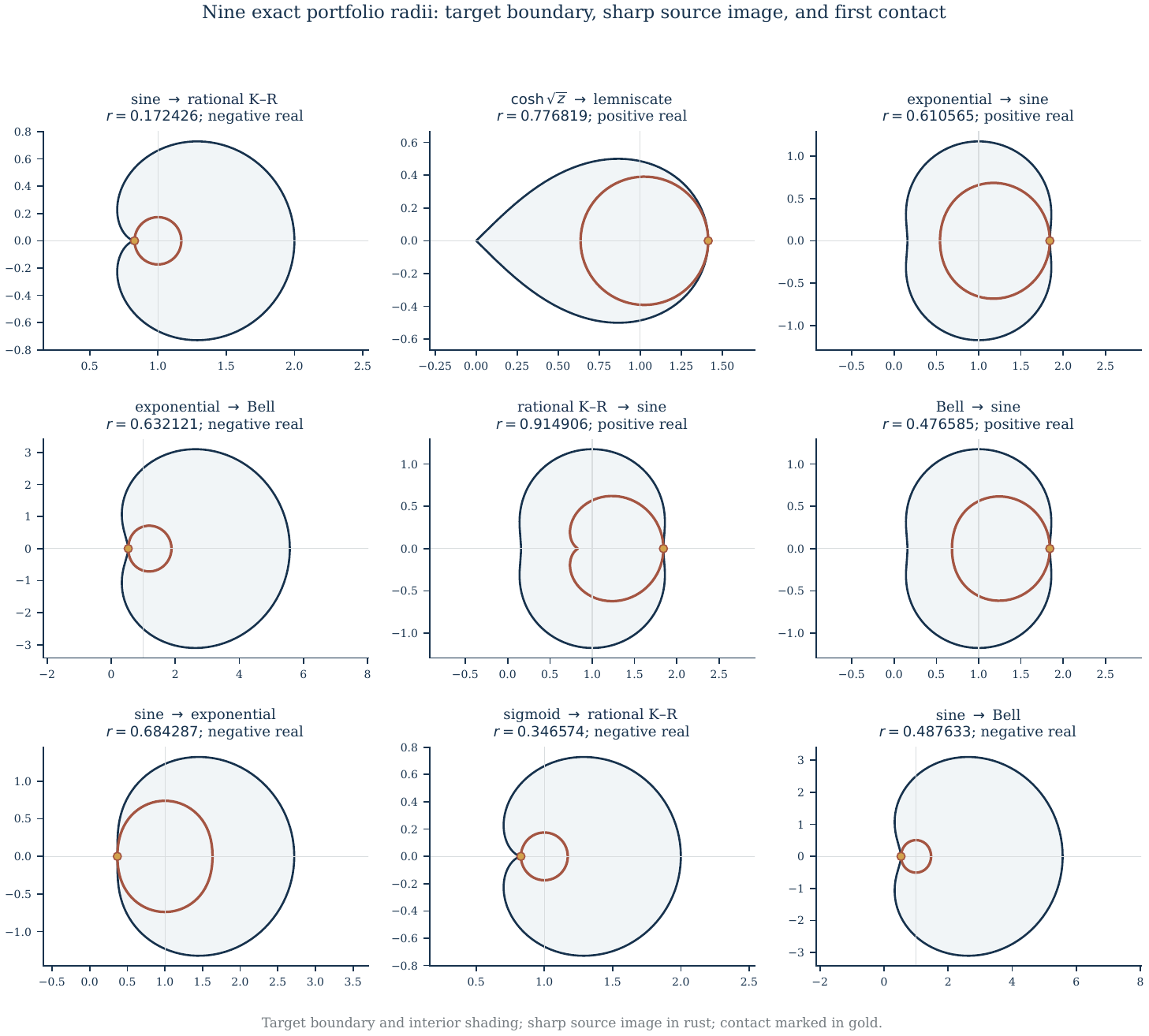}
\caption{Three representative boundary-contact inspection surfaces: an
implicit-target closure and two positive-majorant geometries.  In each panel
the dark curve is the target boundary, the rust curve is the sharp source
image, and the gold marker is the certified first contact.  The complete
nine-panel sheet is \cref{fig:all-portfolio-contacts}.}
\label{fig:ten-result-contacts}
\end{figure}

\subsection{Seven executable certificates}\label{sec:exact-discharge}

The preceding ten rows emphasize shared analytic routes and literature
reconciliation.  The following seven emphasize executable replay of the full
evidence ladder.  The Geometric Function Atlas does more than locate constants:
selected candidates are discharged to
executable theorems along a fixed evidence ladder -- numerical location,
symbolic recognition, a global containment proof, an attainment witness, and an
independent executable verification.  The crescent-to-lemniscate route
below traces this ladder in full (\cref{fig:pipeline-discovery}); the other six
complete the same stages and cover six additional proof routes.

\begin{figure}[t]
\centering
\begin{tikzpicture}[font=\small,>=Latex,node distance=3.4mm,
  every node/.style={align=left},
  rung/.style={draw=sea,rounded corners,fill=pale,inner sep=5pt,
    text width=11.4cm,minimum height=0.8cm},
  arr/.style={-Latex,semithick,draw=rust}]
\node[rung] (d) {\textbf{1.~Discovery.} Directed boundary search returns $r\approx0.353553390593$; \status{candidate}.};
\node[rung,below=of d] (r) {\textbf{2.~Recognition.} Exact form $r_*=\sqrt2/4$, threshold $8r_*^2=1$; \status{exact candidate}.};
\node[rung,below=of r] (g) {\textbf{3.~Global proof.} $|\psi(z)|\le 2r\bigl(r+\sqrt{1+r^2}\bigr)\le1$ for all $|z|\le r_*$; \status{upper bound}.};
\node[rung,below=of g] (a) {\textbf{4.~Attainment.} $z=r_*$ saturates the bound and the Ma--Minda extremal reaches $\partial\Omega_L$; \status{sharp}.};
\node[rung,below=of a,draw=sage!70!black,fill=sage!12] (t) {\textbf{5.~Executable theorem.} Global bound and attainment, independently verified and regression-tested; \status{proven}.};
\foreach \x/\y in {d/r,r/g,g/a,a/t}{\draw[arr] (\x)--(\y);}
\end{tikzpicture}
\caption{The evidence ladder for the crescent-to-lemniscate radius
(\cref{thm:crescent-lemniscate}).  Each stage adds one obligation; a stage that
closes only after the global proof and attainment stages are complete.}
\label{fig:pipeline-discovery}
\end{figure}

\subsubsection{Crescent-to-lemniscate certificate}

Let $\varphi_{\mathbb C}(z)=z+\sqrt{1+z^2}$ generate the crescent class and
$\varphi_L(z)=\sqrt{1+z}$ the Bernoulli-lemniscate class, whose target domain is
$\Omega_L=\{w:|w^2-1|<1\}$.  Boundary search proposes $r\approx0.353553390593$;
recognition returns $\sqrt2/4$.

\begin{theorem}\label{thm:crescent-lemniscate}
The sharp $\Sstar{\varphi_L}$-radius of $\Sstar{\varphi_{\mathbb C}}$ is
\[
 R_{\Sstar{\varphi_L}}(\Sstar{\varphi_{\mathbb C}})=\frac{\sqrt2}{4}
 =0.353553390593\ldots.
\]
\end{theorem}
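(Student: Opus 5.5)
We reduce via \eqref{eq:image-containment} to showing $\varphi_{\mathbb C}(\D_r)\subseteq\Omega_L$ exactly when $r\le\sqrt2/4$. Membership in $\Omega_L$ is the branch-free predicate $|w^2-1|<1$, which avoids choosing an inverse branch of $\varphi_L$. So we set
\[
 \psi(z)=\varphi_{\mathbb C}(z)^2-1 .
\]
The key algebraic step is to expand the square. Using $(\sqrt{1+z^2})^2=1+z^2$ we get
\[
 \psi(z)=z^2+2z\sqrt{1+z^2}+1+z^2-1=2z\bigl(z+\sqrt{1+z^2}\bigr).
\]
Here $\sqrt{1+z^2}$ is the principal branch, analytic on $\D$ because $1+z^2$ has positive real part there.

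For the global bound we estimate on $|z|\le r<1$. First, $|z|\le r$. Second, $|\sqrt{1+z^2}|=|1+z^2|^{1/2}\le(1+r^2)^{1/2}$. Together these give
\[
 |\psi(z)|\le 2r\bigl(r+\sqrt{1+r^2}\bigr)=2r\,\varphi_{\mathbb C}(r),
\]
which is the bound in stage~3 of \cref{fig:pipeline-discovery}. The right side is strictly increasing in $r$, so containment holds as long as $2r(r+\sqrt{1+r^2})\le1$. At equality we need care: the disk $\D_r$ is open, or the estimate is attained only on the real axis. Either way the image stays in the closed target with boundary contact only at the real point.

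To solve the threshold, write $\sqrt{1+r^2}=\frac1{2r}-r$ and square both sides. This gives $1+r^2=\frac1{4r^2}-1+r^2$, so $4r^2=\tfrac12$. Hence $r^2=1/8$ and $r=\sqrt2/4$, which is the threshold $8r_*^2=1$. As a consistency check, at this value $\frac1{2r}-r=\sqrt2-\sqrt2/4=3\sqrt2/4=\sqrt{9/8}=\sqrt{1+1/8}$.

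For attainment and sharpness, take $z=r_*$ on the positive real axis. There every inequality above is an equality: $z=r_*$, and $1+z^2=1+r_*^2>0$. So $\psi(r_*)=1$, meaning $\varphi_{\mathbb C}(r_*)$ lies on $\partial\Omega_L$. For any $r>r_*$, real $u\in(r_*,r)$ gives $\psi(u)>1$, so the image leaves $\Omega_L$. The Ma--Minda extremal $f_{\varphi_{\mathbb C}}$ of \eqref{eq:extremal} satisfies $zf'/f=\varphi_{\mathbb C}(z)$. By the common dilation argument it fails to belong to $\Sstar{\varphi_L}$ after dilation by any $r>r_*$, so no larger radius works.

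The only delicate point is the branch bookkeeping. We need the identity $\sqrt{1+z^2}^{\,2}=1+z^2$ and the modulus bound for the principal root on the closed disk. Both are immediate since $1+z^2$ avoids $(-\infty,0]$ for $|z|<1$. No interval certification is needed: the majorant bound closes the argument exactly.
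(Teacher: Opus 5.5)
Your proposal is correct and is essentially the paper's own proof. Both use the collapse $\psi(z)=2z\bigl(z+\sqrt{1+z^2}\bigr)$, the triangle-inequality majorant $2r\bigl(r+\sqrt{1+r^2}\bigr)$ attained only at $z=r$, the threshold $8r_*^2=1$, and sharpness from the Ma--Minda extremal reaching the lemniscate vertex $\varphi_{\mathbb C}(r_*)=\sqrt2$. One point that you and the paper both leave implicit: $\{w:|w^2-1|<1\}$ has two lobes, so your branch-free predicate places $\varphi_{\mathbb C}(\D_r)$ in $\varphi_L(\D)$ only after noting that this image is connected and contains $\varphi_{\mathbb C}(0)=1$.
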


\begin{proof}
Since $\varphi_L^{-1}(w)=w^2-1$, the composition collapses:
\[
 \psi(z):=\varphi_L^{-1}\!\big(\varphi_{\mathbb C}(z)\big)
 =\big(z+\sqrt{1+z^2}\big)^2-1=2z\big(z+\sqrt{1+z^2}\big).
\]
For $|z|=r$, the triangle inequality together with $|1+z^2|\le1+r^2$ gives
$|\psi(z)|\le 2r\big(r+\sqrt{1+r^2}\big)=:B(r)$, with equality at $z=r$, and $B$
is strictly increasing.  Hence $\sup_{|z|\le r}|\psi|=B(r)$, and the sharp radius
solves $B(r_*)=1$, i.e. $2r_*\sqrt{1+r_*^2}=1-2r_*^2$, equivalently $8r_*^2=1$,
so $r_*=\sqrt2/4$.  The Ma--Minda extremal \eqref{eq:extremal} for
$\varphi_{\mathbb C}$ attains $\varphi_{\mathbb C}(r_*)=\sqrt2$, the lemniscate
vertex on $\partial\Omega_L$, and the standard dilation argument excludes every
larger radius.
\end{proof}

\subsubsection{Six additional certificates}

The same ladder closes six more directed radii.

\begin{theorem}\label{thm:sine-tanh}
For $\varphi_S(z)=1+\sin z$ and $\varphi_{\tanh}(z)=1+\tanh z$, the sharp
$\Sstar{\varphi_{\tanh}}$-radius of $\Sstar{\varphi_S}$ is
\[
 R_{\Sstar{\varphi_{\tanh}}}(\Sstar{\varphi_S})=\arcsin(\tanh1)
 =0.865769483240\ldots.
\]
\end{theorem}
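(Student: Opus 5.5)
The plan follows the sine-to-modified-sigmoid argument of \cref{thm:sine-sigmoid}, since the target here is $\tanh$ rather than a logistic function. First, compute the inverse composition. The normalized inverse of $\varphi_{\tanh}$ is $w\mapsto\operatorname{artanh}(w-1)$, so
\[
 G(u)=\varphi_{\tanh}^{-1}(1+\sin u)=\operatorname{artanh}(\sin u).
\]
The containment $\varphi_S(\D_r)\subseteq\varphi_{\tanh}(\D)$ should then reduce to $\sup_{|u|\le r}|G(u)|\le1$.

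For that reduction I need two facts. The first is that $\varphi_{\tanh}$ is univalent on $\D$. Writing $\tanh z=(e^{2z}-1)/(e^{2z}+1)$, the map $e^{2z}$ is injective on $|\operatorname{Im}z|<1<\pi/2$ and the M\"obius step is injective, so this holds. The second is that the branch of $\operatorname{artanh}$ is valid. At the candidate radius $|\sin u|\le\sinh r$, and one checks $\sinh(\arcsin\tanh1)<1$ (approximately $0.977$). So $\sin u$ stays in the unit disk, where the principal $\operatorname{artanh}$ branch is analytic.

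The global bound comes from ray integration exactly as in \eqref{eq:angular-bound}. On $|u|<\pi/2$ we have $\operatorname{artanh}(\sin u)=\int_0^u\sec t\,dt$. For $t=se^{i\theta}$, the identity $|\cos(x+iy)|^2=\cos^2x+\sinh^2y$ gives $|\cos t|\ge\cos|t|$, hence
\[
 |\operatorname{artanh}(\sin u)|\le\int_0^{|u|}\sec s\,ds=\operatorname{artanh}(\sin|u|),
\]
with equality on the positive real axis. The right side is increasing in $r$, so containment holds exactly when $\operatorname{artanh}(\sin r)\le1$. That is $\sin r\le\tanh1$, i.e.\ $r\le\arcsin(\tanh1)$, which is below $\pi/2$ as required.

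Sharpness comes from the common mechanism. At $u=r_*$ we get $1+\sin r_*=1+\tanh1=\varphi_{\tanh}(1)\in\partial\varphi_{\tanh}(\D)$. The Ma--Minda extremal \eqref{eq:extremal} for $\varphi_S$ then has $zf'/f=\varphi_S(z)$ leaving the target for any larger dilation, which excludes every $r>r_*$.

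The main obstacle is the inequality $|\cos t|\ge\cos|t|$. It follows from $\cos^2x+\sinh^2y\ge\cos^2\sqrt{x^2+y^2}$, which I would verify in the same way the paper does in \eqref{eq:angular-bound}. Beyond that, the only remaining work is the numerical check that the $\operatorname{artanh}$ branch stays valid at $r_*$.
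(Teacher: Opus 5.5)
Your proposal is correct and follows the paper's proof. It uses the same inverse composition $\operatorname{artanh}(\sin u)$, the ray-integration bound \eqref{eq:angular-bound} (which the paper simply halves), the threshold $\sin r_*=\tanh1$, and sharpness via the Ma--Minda extremal, with univalence and branch checks that mirror those in \cref{thm:sine-sigmoid}; the step you flag as the main obstacle is immediate, since $|x|\le|t|<\pi/2$ gives $|\cos t|^2=\cos^2x+\sinh^2y\ge\cos^2x\ge\cos^2|t|$.
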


\begin{proof}
The inverse composition is
$\psi(z)=\varphi_{\tanh}^{-1}(\varphi_S(z))=\operatorname{artanh}(\sin z)$.
Halving the ray bound \eqref{eq:angular-bound} gives
$|\operatorname{artanh}(\sin z)|\le\operatorname{artanh}(\sin r)$ for
$|z|\le r<\pi/2$, with equality on the real axis.  The threshold
$\operatorname{artanh}(\sin r_*)=1$ is $\sin r_*=\tanh1$, and the extremal
\eqref{eq:extremal} proves sharpness.
\end{proof}

\begin{theorem}\label{thm:starlike-lemniscate}
For the full starlike class $\mathcal S^*=\Sstar{(1+z)/(1-z)}$ and the
lemniscate class $\Sstar{\varphi_L}$,
\[
 R_{\Sstar{\varphi_L}}(\mathcal S^*)=3-2\sqrt2
 =0.171572875254\ldots.
\]
\end{theorem}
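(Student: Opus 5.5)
The plan is to follow the inverse-composition route used for \cref{thm:crescent-lemniscate}. By \eqref{eq:image-containment} the problem is the containment $\varphi(\D_r)\subseteq\Omega_L=\{w:|w^2-1|<1\}$ with $\varphi(z)=(1+z)/(1-z)$. Since $\varphi_L^{-1}(w)=w^2-1$, I would study the composition
\[
 \psi(z)=\varphi(z)^2-1=\frac{(1+z)^2-(1-z)^2}{(1-z)^2}=\frac{4z}{(1-z)^2}.
\]
This is four times the Koebe function. It has nonnegative Maclaurin coefficients $4n$, so the positive-majorant principle of the earlier subsection gives
\[
 |\psi(z)|\le\frac{4|z|}{(1-|z|)^2}=:B(|z|),
\]
with equality at $z=r$. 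Because $B$ is strictly increasing on $[0,1)$, we get $\sup_{|z|\le r}|\psi|=B(r)$. Containment therefore holds exactly when $B(r)\le1$.

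Solving $4r=(1-r)^2$ gives $r^2-6r+1=0$, whose root in $(0,1)$ is $r_*=3-2\sqrt2$. As a check, $1-r_*=2\sqrt2-2$, so $(1-r_*)^2=12-8\sqrt2=4r_*$. At this radius $\varphi(r_*)=(4-2\sqrt2)/(2\sqrt2-2)=\sqrt2$, which is the right lemniscate vertex on $\partial\Omega_L$.

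Two points need a remark. First, the branch issue disappears: the target predicate $|w^2-1|<1$ is branch-free, and the Ma--Minda domain $\Omega_L$ is exactly the right-half-plane component of this set. The image $\varphi(\D_r)$ is connected and lies in the right half-plane, so it sits in the correct component, and the bound on $|\psi|$ alone decides containment. Second, for sharpness I would apply the common argument with the Koebe function $f(z)=z/(1-z)^2$, the extremal \eqref{eq:extremal} for $\varphi$. Its dilation $f_r$ has $zf_r'/f_r=\varphi(rz)$, whose image reaches $\sqrt2$ at $r=r_*$ and leaves $\overline{\Omega_L}$ for any larger $r$, since $\varphi(r)>\sqrt2$ on the real axis.

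I do not expect a serious obstacle. The only delicate step is justifying that $|\psi|\le1$ is equivalent to membership in the correct component of $\Omega_L$; the connectedness argument above handles it.
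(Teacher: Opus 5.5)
Your proposal is correct and follows the paper's proof essentially step for step: the same reduction to $\psi(z)=4z/(1-z)^2$, the same increasing bound $4r/(1-r)^2$ with equality at $z=r$, the same threshold equation $r^2-6r+1=0$, and the Koebe extremal reaching the vertex $\sqrt2\in\partial\Omega_L$. The differences are cosmetic. You bound $|\psi|$ through its nonnegative coefficients, whereas the paper uses the reverse triangle inequality $|1-z|\ge 1-r$. Your right-half-plane remark also makes explicit the choice of lemniscate component, which the paper leaves implicit.
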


\begin{proof}
Here $\psi(z)=\big((1+z)/(1-z)\big)^2-1=4z/(1-z)^2$, and the reverse triangle
inequality gives $|\psi(z)|\le 4r/(1-r)^2=:g(r)$ on $|z|=r$, with equality at
$z=r$; $g$ is strictly increasing on $[0,1)$.  The threshold $g(r_*)=1$ is
$r_*^2-6r_*+1=0$, whose root in $(0,1)$ is $3-2\sqrt2$.  The Koebe extremal
$f_0(z)=z/(1-z)^2$ attains $(1+r_*)/(1-r_*)=\sqrt2\in\partial\Omega_L$, proving
sharpness.
\end{proof}

\begin{theorem}\label{thm:orderhalf-crescent}
For the order-$1/2$ starlike class $\Sstar{\varphi_{1/2}}$ with
$\varphi_{1/2}(z)=1/(1-z)$, and the crescent class $\Sstar{\varphi_{\mathbb C}}$,
\[
 R_{\Sstar{\varphi_{\mathbb C}}}(\Sstar{\varphi_{1/2}})=2-\sqrt2
 =0.585786437627\ldots.
\]
\end{theorem}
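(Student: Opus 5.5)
The plan is to follow the positive-majorant route used for \cref{thm:exponential-sine} and \cref{thm:rational-sine}. First I would write down the inverse of the crescent generator. From $w=z+\sqrt{1+z^2}$ one gets $w^2-2zw-1=0$, so
\[
 \varphi_{\mathbb C}^{-1}(w)=\frac{w^2-1}{2w}=\frac12\Bigl(w-\frac1w\Bigr).
\]
Composing with the source generator $\varphi_{1/2}(u)=1/(1-u)$, the composition should collapse:
\[
 \psi(u):=\varphi_{\mathbb C}^{-1}\bigl(\varphi_{1/2}(u)\bigr)
 =\frac12\Bigl(\frac1{1-u}-(1-u)\Bigr)=\frac{u(2-u)}{2(1-u)}.
\]
By \eqref{eq:image-containment}, the radius problem becomes the largest $r$ with $\sup_{|u|\le r}|\psi(u)|\le1$.

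This reduction is valid only once the branch is pinned down, and I expect this to be the main obstacle. The inverse $\frac12(w-1/w)$ is two-to-one, sending $w$ and $-1/w$ to the same point. So I would check the following.
\begin{itemize}
 \item The crescent domain $\varphi_{\mathbb C}(\D)$ is exactly the component of $\{w:|w^2-1|<2|w|\}$ lying in $\operatorname{Re}w>0$. Here $\varphi_{\mathbb C}$ is univalent on $\D$, with inverse the branch above.
 \item The source image $\varphi_{1/2}(\D_r)$ lies in the half-plane $\operatorname{Re}w>1/2$, so it never meets the conjugate sheet $w\mapsto-1/w$, which sends $\operatorname{Re}w>0$ to $\operatorname{Re}w<0$.
\end{itemize}
With these two facts, containment $\varphi_{1/2}(\D_r)\subseteq\varphi_{\mathbb C}(\D)$ is equivalent to $|\psi|<1$ on $\D_r$.

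Next I would expand $\psi$ as
\[
 \psi(u)=\frac u2+\frac{u}{2(1-u)}=u+\tfrac12u^2+\tfrac12u^3+\cdots,
\]
which has nonnegative Maclaurin coefficients. Hence $|\psi(u)|\le\psi(|u|)$, with equality on the positive real axis. Since $\psi$ is strictly increasing on $[0,1)$, the threshold is $\psi(r)=1$, that is, $r(2-r)=2(1-r)$, or $r^2-4r+2=0$. Its root in $(0,1)$ is $r_*=2-\sqrt2$.

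For sharpness I would use the extremal \eqref{eq:extremal} for $\varphi_{1/2}$, namely $f(z)=z/(1-z)$. At $u=r_*$ it gives $\varphi_{1/2}(r_*)=1/(\sqrt2-1)=1+\sqrt2=\varphi_{\mathbb C}(1)$, which is the right vertex of the crescent boundary. The standard dilation argument then excludes every $r>r_*$.
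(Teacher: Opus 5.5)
Your proof is correct, but its key step differs from the paper's. The paper works with the squared modulus directly. With $z=re^{it}$ and $c=\cos t$, the condition $|\psi|\le1$ becomes $(r^4-4)+4rc(2-r^2)\le0$. This is increasing in $c$ for $r<\sqrt2$, so the worst point is $z=r$. The paper then studies the quartic $F(r)=r^4-4r^3+8r-4$ through the sign of $F'$ on $(0,2-\sqrt2)$.

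You instead split $\psi(u)=u/2+u/(2(1-u))$ and read off nonnegative Maclaurin coefficients. That gives $|\psi(u)|\le\psi(|u|)$ at once, so the threshold is the quadratic $r^2-4r+2=0$. In fact $F(r)=(r^2-4r+2)(r^2-2)$: the paper's quartic contains your quadratic, and the extra factor comes from squaring.

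Your route is shorter and needs no derivative analysis. It also places this row under the same positive-majorant template as \cref{thm:exponential-sine,thm:rational-sine}. The paper's computation is more mechanical, but it gives the exact angular profile of $|\psi|$ on each circle. It would also apply to rational compositions that have no positive-coefficient structure.

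You also make the branch choice explicit: the right-hand component of $\{|w^2-1|<2|w|\}$, together with $\operatorname{Re}\varphi_{1/2}>1/2$. The paper's compact proof leaves this implicit. To close your first bullet, note that $\frac12(w-1/w)$ sends $iy$ to $\frac i2(y+1/y)$, which has modulus at least $1$. So that set never meets the imaginary axis, and the component containing $\varphi_{\mathbb C}(\D)$ lies in $\operatorname{Re}w>0$.
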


\begin{proof}
With $\varphi_{\mathbb C}^{-1}(w)=(w^2-1)/(2w)$, the composition collapses to
$\psi(z)=(2z-z^2)/\big(2(1-z)\big)$.  Writing $z=re^{it}$ and $c=\cos t$, the
bound $|\psi|\le1$ is equivalent to $(r^4-4)+4rc(2-r^2)\le0$, which is linear and
increasing in $c$ for $0<r<\sqrt2$; its maximum over $|z|=r$ is at $z=r$, where
it reads $F(r):=r^4-4r^3+8r-4\le0$.  Since $F(0)=-4$ and
$F'(r)=4(r-1)(r-1-\sqrt3)(r-1+\sqrt3)>0$ on $(0,2-\sqrt2)$, $F$ increases there
to $F(2-\sqrt2)=0$; hence $r_*=2-\sqrt2$.  The extremal $f_0(z)=z/(1-z)$ attains
$\varphi_{1/2}(r_*)=1+\sqrt2=\varphi_{\mathbb C}(1)$, the crescent vertex,
proving sharpness.
\end{proof}

\begin{theorem}\label{thm:exp-lemniscate}
For the exponential class $\Sstar{\varphi_e}$ with $\varphi_e(z)=e^z$ and the
lemniscate class $\Sstar{\varphi_L}$,
\[
 R_{\Sstar{\varphi_L}}(\Sstar{\varphi_e})=\tfrac12\log2
 =0.346573590280\ldots.
\]
\end{theorem}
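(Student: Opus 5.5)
We reduce the statement, via \eqref{eq:image-containment}, to the containment $\exp(\D_r)\subseteq\Omega_L=\{w:|w^2-1|<1\}$, and then follow the same positive-majorant route used for \cref{thm:cosh-lemniscate}. Since $\varphi_L^{-1}(w)=w^2-1$, the inverse composition collapses to
\[
 \psi(u)=\varphi_L^{-1}(e^u)=e^{2u}-1=\sum_{n\ge1}\frac{(2u)^n}{n!}.
\]
This is entire with nonnegative Maclaurin coefficients. The membership test for the lemniscate target is $|\psi(u)|\le1$, and no branch of an inverse is needed. This is because $\varphi_L$ maps $\D$ univalently onto $\Omega_L$, and $w\in\overline{\Omega_L}$ iff $|w^2-1|\le1$ with $\operatorname{Re}w>0$. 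The half-plane condition is automatic for $w=e^u$ with $|u|\le r<\pi/2$, since then $|\operatorname{Im}u|<\pi/2$.

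For the global upper bound, the positive-coefficient majorant gives
\[
 |e^{2u}-1|\le e^{2|u|}-1\qquad(u\in\C),
\]
with equality exactly along the positive real axis. The function $B(r)=e^{2r}-1$ is strictly increasing, so $\sup_{|u|\le r}|\psi(u)|=B(r)$. The threshold $B(r_*)=1$ reads $e^{2r_*}=2$, that is $r_*=\tfrac12\log2$. Hence for $r\le r_*$, Schwarz's lemma yields $zf_r'(z)/f_r(z)\in\exp(\D_r)\subseteq\overline{\Omega_L}$. The subordination is then strict inside $\D$ by the maximum principle, since the image $\exp(\D_r)$ of the open disk is open and lies in $\overline{\Omega_L}$.

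For sharpness, we use the Ma--Minda extremal \eqref{eq:extremal} for $\varphi_e$, namely $f(z)=z\exp\bigl(\int_0^z (e^t-1)/t\,dt\bigr)$. It satisfies $zf'(z)/f(z)=e^z$, so at $z=r_*$ the value is $e^{r_*}=\sqrt2$. This is the right vertex of the lemniscate, where $|w^2-1|=1$. For any $r>r_*$, the dilation $f_r$ takes the value $e^{r}>\sqrt2$ at a real point $z=r'/r$ with $r_*<r'<r$ inside the unit disk, and this value lies outside $\overline{\Omega_L}$. So $f_r\notin\Sstar{\varphi_L}$.

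The only real obstacle is the global (angular) step, and here the majorant route disposes of it entirely. The one point needing care is that the contact occurs at $u=+r$ and not at $u=-r$. Near the left side one has $|e^{-2r}-1|=1-e^{-2r}<1$, so there is no earlier escape at the left vertex $w=0$ of the lemniscate. The majorant inequality certifies this for all angles simultaneously.
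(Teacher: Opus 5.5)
Your proof is correct and follows the paper's argument essentially verbatim: the inverse composition collapses to $\varphi_L^{-1}(e^u)=e^{2u}-1$ on the right lobe, justified by $\operatorname{Re}e^u>0$ for $|u|<\pi/2$; the exponential-series majorant gives $|e^{2u}-1|\le e^{2r}-1$ with threshold $e^{2r_*}=2$; and sharpness comes from the Ma--Minda extremal reaching the vertex $\sqrt2$. One small slip: at $z=r'/r$ the quantity $zf_r'(z)/f_r(z)$ equals $e^{r'}$, not $e^{r}$; both exceed $\sqrt2$, so the conclusion stands. Also, strictness for $|u|<r_*$ follows directly from $e^{2|u|}-1<1$, with no appeal to the maximum principle needed.
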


\begin{proof}
For $|u|\le r<\pi/2$ one has $\operatorname{Re}e^u>0$, so $e^u$ lies in the right
lemniscate lobe and $\varphi_L^{-1}(e^u)=e^{2u}-1=:\psi(u)$.  The exponential
series gives $|\psi(u)|\le e^{2r}-1=:B(r)$, strictly increasing, with
$B(\tfrac12\log2)=1$ (so $e^{2r_*}=2$) and equality at $u=r_*$.  The extremal
\eqref{eq:extremal} for $\varphi_e$ attains $\varphi_e(r_*)=\sqrt2$ on
$\partial\Omega_L$, proving sharpness.
\end{proof}

\begin{theorem}\label{thm:exp-orderhalf}
For the exponential class $\Sstar{\varphi_e}$ and the order-$1/2$ starlike class
$\Sstar{\varphi_{1/2}}$,
\[
 R_{\Sstar{\varphi_{1/2}}}(\Sstar{\varphi_e})=\log2
 =0.693147180560\ldots.
\]
\end{theorem}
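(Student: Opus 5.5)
We reduce the problem, as in \eqref{eq:image-containment}, to showing $e^{\D_r}\subseteq\varphi_{1/2}(\D)$ exactly when $r\le\log2$. The target is the half-plane $\varphi_{1/2}(\D)=\{w:\operatorname{Re}w>1/2\}$. Rather than composing with an inverse, we use this branch-free predicate directly.

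For $u=re^{it}$ we have $\operatorname{Re}e^u=e^{r\cos t}\cos(r\sin t)$. The aim is to show that the minimum over $|u|=r$ occurs at $u=-r$, where the value is $e^{-r}$. Granting this, containment holds iff $e^{-r}\ge1/2$, i.e. $r\le\log2$.

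The obstacle is this angular minimization, since the two factors move in opposite directions as $t$ varies. We handle it as follows.

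\begin{itemize}
\item Write $x=r\cos t$ and $y=r\sin t$. Set $h(t)=\log\operatorname{Re}e^u=x+\log\cos y$, which is valid since $|y|\le r<\pi/2$.
\item By symmetry we may take $t\in[0,\pi]$. Then
\[
h'(t)=-r\sin t-r\cos t\,\tan(r\sin t).
\]
\item On $[0,\pi/2]$ both terms are $\le0$, so $h$ decreases there.
\item On $[\pi/2,\pi]$ we need $h'\le0$, i.e. $-\cos t\tan(r\sin t)\le\sin t$. Since $\tan$ is convex on $[0,\pi/2)$, we have $\tan(r\sin t)\le\sin t\cdot\tan r/1$ when $r\le1$, using $\tan(\lambda a)\le\lambda\tan a$ for $\lambda\in[0,1]$ (take $a=r$, $\lambda=\sin t$). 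Thus $\tan(r\sin t)\le\sin t\tan r$, and it suffices that $|\cos t|\tan r\le1$.
\item This last condition holds whenever $\tan r\le1$, i.e. $r\le\pi/4$. That covers $r\le\log2\approx0.693<0.785$.
\end{itemize}

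So $h$ is nonincreasing on $[0,\pi]$, and the minimum is at $t=\pi$.

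This gives containment for $r\le\log2$. Equality $\operatorname{Re}e^{-r}=1/2$ occurs at $u=-\log2$, so the source image touches the boundary line $\operatorname{Re}w=1/2$ there.

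For sharpness we use the extremal \eqref{eq:extremal} for $\varphi_e$. At $z=-r_*$ it gives $zf'/f=e^{-\log2}=1/2\in\partial\varphi_{1/2}(\D)$. For $r>\log2$ the dilation has $\operatorname{Re}(zf_r'/f_r)<1/2$ near $z=-1$, so the dilation leaves the class, and the radius is exactly $\log2$.
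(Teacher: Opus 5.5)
Your proof is correct, but it takes a different route from the paper's.

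The paper composes with the global M\"obius inverse $\varphi_{1/2}^{-1}(w)=1-1/w$, so containment becomes $|1-e^{-u}|<1$. It then closes in one line with the absolute-coefficient majorant $|1-e^{-u}|\le e^{|u|}-1$. This bound increases in $r$, equals $1$ exactly at $r=\log2$, and is attained at $u=-r$. No angular analysis or cap on $r$ is needed, and the proof fits the positive-majorant template used for the other portfolio rows.

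You instead keep the branch-free predicate $\operatorname{Re}w>1/2$. Using the monotonicity of $h(t)=\log\operatorname{Re}e^{re^{it}}$ on $[0,\pi]$ and the convexity bound $\tan(r\sin t)\le\sin t\tan r$, you show that $\min_{|u|=r}\operatorname{Re}e^{u}=e^{-r}$ for $r\le\pi/4$. This costs a short calculus argument and a cap $r\le\pi/4$, which is harmless since $\log2<\pi/4$. In return it proves more. It identifies the exact minimum of $\operatorname{Re}e^{u}$ on each such circle, and hence the order-$\alpha$ radius $\log(1/\alpha)$ of the exponential class whenever $\log(1/\alpha)\le\pi/4$. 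The paper's majorant does not give this, because it relies on the special fact that at $\alpha=1/2$ the composition collapses to $1-e^{-u}$.

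For completeness, state how you pass from the circle to the open disk. Either invoke the minimum principle for the harmonic function $\operatorname{Re}e^{u}$, or apply your bound on every circle $|u|=\rho<r$. This gives the strict inequality $\operatorname{Re}e^{u}>e^{-r}\ge1/2$ needed for subordination to the univalent $\varphi_{1/2}$.
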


\begin{proof}
Here $\varphi_{1/2}^{-1}(w)=1-1/w$, so $\psi(u)=1-e^{-u}$ and the exponential
series gives $|\psi(u)|\le e^{r}-1=:B(r)$, strictly increasing, with
$B(\log2)=1$ and equality at $u=-r_*$, where $e^{-r_*}=1/2$ and
$\varphi_{1/2}^{-1}(1/2)=-1\in\partial\D$.  The extremal \eqref{eq:extremal}
proves sharpness.
\end{proof}

\begin{theorem}\label{thm:starlike-order34}
For the full starlike class $\mathcal S^*$ and the order-$3/4$ class
$\Sstar{\varphi_{3/4}}$ with $\varphi_{3/4}(z)=(1-z/2)/(1-z)$,
\[
 R_{\Sstar{\varphi_{3/4}}}(\mathcal S^*)=\tfrac17
 =0.142857142857\ldots.
\]
\end{theorem}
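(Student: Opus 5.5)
Following the pattern of \cref{thm:starlike-lemniscate,thm:exp-orderhalf}, we first reduce the statement to the image-containment problem \eqref{eq:image-containment}. Here the source image is $\varphi_0(\D_r)$, with $\varphi_0(z)=(1+z)/(1-z)$. The target is $\varphi_{3/4}(\D)$, which is the half-plane $\operatorname{Re}w>3/4$. Indeed, $\varphi_{3/4}$ is the Ma--Minda generator of starlikeness of order $\alpha=3/4$: we have $(1+(1-2\alpha)z)/(1-z)=(1-z/2)/(1-z)$. The inverse is $\varphi_{3/4}^{-1}(w)=(w-1)/(w-1/2)$, a M\"obius map sending $\operatorname{Re}w>3/4$ onto $\D$.

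The composition then collapses. Since $\varphi_0(z)-1=2z/(1-z)$ and $\varphi_0(z)-\tfrac12=(1+3z)/\bigl(2(1-z)\bigr)$, we get
\[
 \psi(z)=\varphi_{3/4}^{-1}(\varphi_0(z))=\frac{4z}{1+3z}.
\]
The condition $\sup_{|z|\le r}|\psi(z)|\le1$ is now controlled by the reverse triangle inequality. On $|z|=r<1/3$ it gives $|\psi(z)|\le 4r/(1-3r)=:g(r)$, with equality at $z=-r$.

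The function $g$ is strictly increasing on $[0,1/3)$. The threshold $g(r_*)=1$ gives $4r_*=1-3r_*$, so $r_*=1/7$. For $r\le1/7$ the maximum principle applies to $\psi$, which is analytic on $|z|<1/3$, and yields containment on the whole closed disk. Alternatively, one can check directly that $\operatorname{Re}\varphi_0(z)\ge(1-r)/(1+r)=3/4$ at $r=1/7$; this is the classical half-plane bound for the disk image.

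For sharpness we use the Koebe extremal $f_0(z)=z/(1-z)^2$. At $z=-1/7$ it gives $zf_0'/f_0=\varphi_0(-1/7)=(6/7)/(8/7)=3/4$, which lies on the boundary line of the target. The standard dilation argument after \eqref{eq:extremal} then excludes every larger radius.

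No step is a real obstacle. The only point requiring care is identifying the target domain correctly and confirming that the contact occurs at $u=-r$ rather than at $u=+r$.
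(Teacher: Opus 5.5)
Your proposal is correct and follows the paper's proof essentially step for step. It uses the same M\"obius inverse $(w-1)/(w-\tfrac12)$, the same collapse to $\psi(u)=4u/(1+3u)$, the same reverse-triangle bound $4r/(1-3r)$ with equality at $u=-r$, the threshold $r_*=1/7$, and sharpness from the Koebe function, while your maximum-principle step and the classical cross-check $\operatorname{Re}\varphi_0(z)\ge(1-r)/(1+r)$ are valid supplements that do not change the route.
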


\begin{proof}
Since $\varphi_{3/4}^{-1}(w)=(w-1)/(w-\tfrac12)$, the composition collapses to
$\psi(u)=4u/(1+3u)$, and the reverse triangle inequality gives
$|\psi(u)|\le 4r/(1-3r)=:G(r)$ on $|u|=r<1/3$ -- the guard
$|1+3u|^2-(1-3r)^2=12r\cos^2(t/2)\ge0$ is an exact identity -- with $G(1/7)=1$
and equality at $u=-1/7$.  The Koebe extremal $z/(1-z)^2$ proves sharpness.
\end{proof}

Each of the seven routes carries an independent executable certificate and a
regression test.  Two of them, $\varphi_e\to\varphi_{1/2}$ and
$\mathcal S^*\to\varphi_{3/4}$, recover standard order-$\alpha$ inclusions and
are recorded as \status{known}; the other five have no matching extracted
claim.  Together with the ten-result core portfolio, they form the
seventeen-result, proof-route--selected sample of this section.  Each is proven exact, with
literature and human-review status recorded separately.  The reciprocal pair in
\cref{sec:crescent-exponential} follows separately because its purpose is a
directed literature conflict.

\section{Reciprocal radii and literature conflict}
\label{sec:crescent-exponential}

The method is not confined to a single favorable example.
Consider the crescent and exponential generators
\[
 \varphi_{\mathbb C}(z)=z+\sqrt{1+z^2},
 \qquad
 \varphi_e(z)=e^z,
\]
which arise from separate Ma--Minda families
\cite{RainaSokol,Mendiratta}; here the numerical search finds its first contact
on the imaginary axis rather than the real one.

\begin{theorem}\label{thm:crescent-exponential}
The sharp $\Sstar{\varphi_e}$-radius of
$\Sstar{\varphi_{\mathbb C}}$ is
\[
 R_{\Sstar{\varphi_e}}(\Sstar{\varphi_{\mathbb C}})=\sin1
 =0.841470984807\ldots.
\]
\end{theorem}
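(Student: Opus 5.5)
The plan is to follow the inverse-composition route of \cref{sec:radius}, reducing the problem to a sharp bound for an inverse hyperbolic sine. The target domain is $\varphi_e(\D)=\{w:|\log w|<1\}$, with the principal logarithm. The exponential is injective on $\D$, because $\D$ lies in the strip $|\operatorname{Im}z|<1<\pi$. So \eqref{eq:image-containment} for $\varphi_{\mathbb C}\to\varphi_e$ is equivalent to $\sup_{|z|\le r}|\psi(z)|\le1$, where $\psi=\varphi_e^{-1}\circ\varphi_{\mathbb C}$ is normalized by $\psi(0)=0$. The composition collapses exactly:
\[
 \psi(z)=\log\bigl(z+\sqrt{1+z^2}\bigr)=\operatorname{arsinh} z .
\]
Both sides are analytic on $\D$ and vanish at $0$, because the singularities of the principal-branch $\operatorname{arsinh}$ sit at $\pm i$. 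Hence they agree, and $\exp(\operatorname{arsinh}z)=\varphi_{\mathbb C}(z)$ on $\D$.

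For the global upper bound I would use ray integration, as for \eqref{eq:angular-bound}. Write $\operatorname{arsinh}z=\int_0^z(1+t^2)^{-1/2}\,dt$ and parametrize the ray by $t=se^{i\theta}$ with $0\le s\le r<1$. From $|1+s^2e^{2i\theta}|\ge1-s^2$ we get
\[
 |\operatorname{arsinh}(re^{i\theta})|\le\int_0^r\frac{ds}{\sqrt{1-s^2}}=\arcsin r .
\]
Equality holds at $\theta=\pm\pi/2$, since $\operatorname{arsinh}(iy)=i\arcsin y$. So $\sup_{|z|\le r}|\psi|=\arcsin r$, which is strictly increasing, and the containment holds exactly when $\arcsin r\le1$, that is, when $r\le\sin1$. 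This also explains why the first contact found numerically lies on the imaginary axis rather than the real one: on the real axis $\operatorname{arsinh} r<\arcsin r$.

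For sharpness I would use the Ma--Minda extremal \eqref{eq:extremal} for $\varphi_{\mathbb C}$, together with the common dilation argument, evaluated at the point $z=i\sin1$. There
\[
 \varphi_{\mathbb C}(i\sin1)=i\sin1+\sqrt{1-\sin^21}=\cos1+i\sin1=e^{i},
\]
and $e^{i}$ lies on $\partial\varphi_e(\D)$ because $|\log e^{i}|=1$. So for any $r>\sin1$ the dilation $f_r$ sends some point of $\D$ outside the target, and the radius cannot be enlarged.

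The main obstacle I expect is branch bookkeeping rather than the inequality itself. One must confirm that the principal square root keeps $\varphi_{\mathbb C}(\D_r)$ away from the cut of $\log$. One must also confirm that $\log\circ\varphi_{\mathbb C}$ genuinely equals the analytic $\operatorname{arsinh}$, and not a branch shifted by $2\pi i$. Normalizing at $0$ and invoking the identity theorem on $\D$ settles both points. The ray estimate itself is routine once $|1+t^2|\ge1-|t|^2$ is recorded.
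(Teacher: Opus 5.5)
Your proposal is correct and follows the paper's route: collapse $\log\circ\varphi_{\mathbb C}$ to $\operatorname{arsinh}z$, bound it by $\arcsin r$ with equality at $z=ir$, solve $\arcsin r=1$, and get sharpness from the Ma--Minda extremal at $\varphi_{\mathbb C}(i\sin1)=e^{i}$. The only difference is cosmetic. You get $|\operatorname{arsinh}z|\le\arcsin r$ by ray integration with $|1+t^2|\ge1-|t|^2$, while the paper uses the absolute-coefficient majorant; both compare $(1+t^2)^{-1/2}$ with $(1-t^2)^{-1/2}$, and your branch bookkeeping is, if anything, more explicit than the paper's.
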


\begin{proof}[Compact verification]
On the branches normalized at the origin,
\[
 \log\!\left(z+\sqrt{1+z^2}\right)=\operatorname{arsinh}z.
\]
For $|z|\le r<1$, the absolute-coefficient majorant gives
\[
 |\operatorname{arsinh}z|
 \le \sum_{n\ge0}\frac{\binom{2n}{n}}{4^n(2n+1)}r^{2n+1}
 =\arcsin r.
\]
Equality holds for $z=ir$, since
$\operatorname{arsinh}(ir)=i\arcsin r$.  Thus the first target-boundary
contact satisfies $\arcsin r=1$, or $r=\sin1$; at that point
\[
 \varphi_{\mathbb C}(i\sin1)=\cos1+i\sin1=e^i.
\]
The principal branches are analytic for $|z|<1$, and $e^z$ is univalent on
$\D$ because $|\operatorname{Im}z|<1<\pi$.  The extremal
\eqref{eq:extremal} proves sharpness.
\end{proof}

\begin{figure}[H]
\centering
\includegraphics[width=0.48\linewidth]{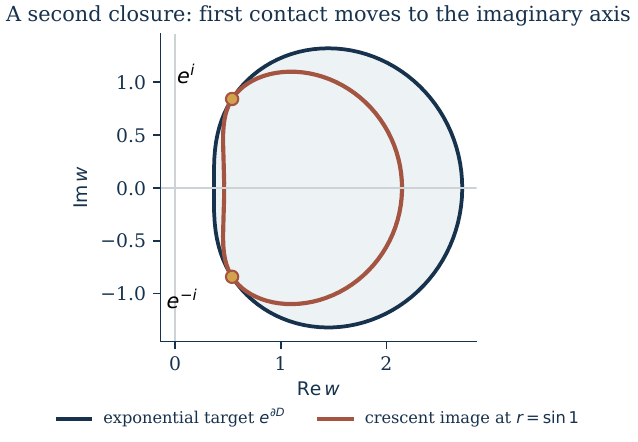}
\caption{The crescent image at $r=\sin1$ touches the exponential target at
$e^{\pm i}$.  Unlike \cref{thm:sine-sigmoid}, the controlling contact
is on the imaginary axis.  The figure is an inspection surface; the
absolute-coefficient argument establishes the global maximum.}
\label{fig:crescent-exponential}
\end{figure}

The directed key now forces the reciprocal computation rather than allowing
the two arrows to share a literature label.

\begin{theorem}\label{thm:exponential-crescent}
The sharp $\Sstar{\varphi_{\mathbb C}}$-radius of
$\Sstar{\varphi_e}$ is
\[
 R_{\Sstar{\varphi_{\mathbb C}}}(\Sstar{\varphi_e})
 =\operatorname{arsinh}1=\log(1+\sqrt2)
 =0.881373587019\ldots.
\]
\end{theorem}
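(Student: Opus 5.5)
The plan is to follow the inverse-composition template of \cref{thm:crescent-exponential}, with the roles of the two generators swapped. First I would invert the crescent generator. If $w=z+\sqrt{1+z^2}$, then $w^2-2zw-1=0$, so
\[
 \varphi_{\mathbb C}^{-1}(w)=\frac{w^2-1}{2w}=\frac{w-w^{-1}}{2}.
\]
Composing with $\varphi_e(u)=e^u$ then collapses to
\[
 \psi(u)=\varphi_{\mathbb C}^{-1}(e^u)=\frac{e^u-e^{-u}}{2}=\sinh u .
\]
By \eqref{eq:image-containment}, the radius problem should therefore reduce to the question of when $|\sinh u|\le1$ on $|u|\le r$. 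This reduction holds once the branch issue below is settled.

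The branch bookkeeping is the step I expect to need the most care, and I would settle it before using the reduction. The quadratic $w^2-2zw-1=0$ has the two roots $z\pm\sqrt{1+z^2}$, whose product is $-1$. For $|z|<1$, $\operatorname{Re}(1+z^2)>0$, so the principal root is analytic and $\varphi_{\mathbb C}$ is univalent on $\D$. To identify the correct root, I would use that $\varphi_{\mathbb C}(\D)$ lies in the right half-plane: the real part of $\varphi_{\mathbb C}$ is harmonic, positive at $0$, and cannot vanish, because $w=it$ would force $z=(w^2-1)/(2w)=i(t^2+1)/(2t)$, which has modulus at least $1$. Now take $|u|\le r<\pi/2$ and set $z=\sinh u$. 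The two roots of the quadratic are then $e^u$ and $-e^{-u}$. Only $e^u$ has positive real part, so $e^u=\varphi_{\mathbb C}(\sinh u)$ whenever $|\sinh u|<1$. Conversely, if $e^u\in\varphi_{\mathbb C}(\D)$, then $\sinh u=\varphi_{\mathbb C}^{-1}(e^u)\in\D$. Hence, for such $r$, $e^{\D_r}\subseteq\varphi_{\mathbb C}(\D)$ exactly when $\sup_{|u|<r}|\sinh u|\le1$.

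For the global bound, I would use the positive-majorant route of \cref{thm:cosh-lemniscate,thm:exponential-sine}. Since $\sinh u$ has nonnegative Maclaurin coefficients, $|\sinh u|\le\sinh|u|\le\sinh r$, with equality at $u=r$. Because $\sinh$ is strictly increasing, containment holds precisely for $\sinh r\le1$, that is, for $r\le\operatorname{arsinh}1=\log(1+\sqrt2)\approx0.881<\pi/2$. This is compatible with the branch condition, and the strict inequality $|\sinh u|<1$ holds on the open disk $|u|<\operatorname{arsinh}1$. Unlike \cref{thm:crescent-exponential}, the contact here is on the positive real axis.

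For sharpness, I would take the extremal \eqref{eq:extremal} for $\varphi_e$. At $u=r_*=\log(1+\sqrt2)$ it gives $zf'/f=e^{r_*}=1+\sqrt2=\varphi_{\mathbb C}(1)$, which is the crescent vertex on $\partial\varphi_{\mathbb C}(\D)$. For any $r>r_*$, the dilation $f_r$ therefore has $zf_r'/f_r$ leaving the target near the positive real axis. The standard dilation argument then excludes every larger radius, giving the value $\operatorname{arsinh}1=\log(1+\sqrt2)$, in contrast with $\sin1$ in the reverse direction.
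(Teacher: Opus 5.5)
Your proposal is correct and follows the paper's proof: you collapse the composition to $\varphi_{\mathbb C}^{-1}(e^u)=\sinh u$, bound it by $|\sinh u|\le\sinh r$ using nonnegative coefficients, place the contact at the crescent vertex $e^{r_*}=1+\sqrt2=\varphi_{\mathbb C}(1)$, and obtain sharpness from the Ma--Minda extremal for $\varphi_e$. Your branch check (the crescent domain lies in the right half-plane, so the relevant root is $e^u$ rather than $-e^{-u}$) is equivalent to the paper's observation that $\operatorname{Re}\cosh u>0$ gives $\sqrt{1+\sinh^2u}=\cosh u$, and you also make explicit the converse inclusion that the sharpness step uses, which the paper leaves implicit.
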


\begin{proof}[Compact verification]
The inverse of the crescent generator on its right-hand component is
\[
 \varphi_{\mathbb C}^{-1}(w)=\frac{w^2-1}{2w},
 \qquad
 \varphi_{\mathbb C}^{-1}(e^z)=\sinh z.
\]
Hence, for $|z|\le r$,
\[
 |\sinh z|\le\sinh|z|\le\sinh r,
\]
by the nonnegative Maclaurin coefficients of $\sinh z$.  Equality holds at
$z=r>0$, so the threshold is $\sinh r=1$.  At
$r=\operatorname{arsinh}1$ the contact is the crescent vertex
\[
 e^r=1+\sqrt2=\varphi_{\mathbb C}(1).
\]
For $|z|<r$, $\operatorname{Re}\cosh z>0$, so
$\sqrt{1+\sinh^2z}=\cosh z$ on the normalized branch and the inverse stays on
the right-hand crescent component.  The exponential is univalent on $\D$, and
the standard extremal gives sharpness beyond the contact.
\end{proof}

\begin{figure}[H]
\centering
\includegraphics[width=0.48\linewidth]{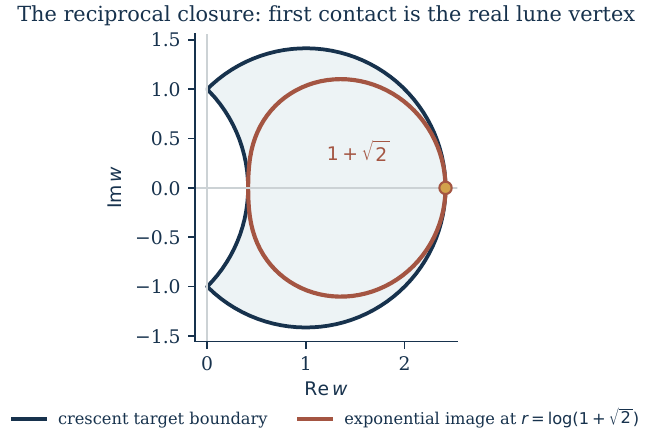}
\caption{The exponential image at $r=\log(1+\sqrt2)$ touches the crescent
target at the real-axis vertex $1+\sqrt2$.  Together with
\cref{fig:crescent-exponential}, this displays the geometric asymmetry of
the reciprocal arrows.  The positive-coefficient majorant establishes the
global maximum.}
\label{fig:exponential-crescent}
\end{figure}

\begin{equation}\label{eq:crescent-reciprocal-pair}
 \boxed{
 R_{\Sstar{\varphi_e}}(\Sstar{\varphi_{\mathbb C}})=\sin1
 \quad\ne\quad
 R_{\Sstar{\varphi_{\mathbb C}}}(\Sstar{\varphi_e})=\log(1+\sqrt2)}.
\end{equation}

The two contacts are shown in
\cref{fig:crescent-exponential,fig:exponential-crescent}.  This pair supplies both a replication and a
genuine reconciliation test.  A
theorem-level audit of the defining crescent paper, the defining exponential
paper, the principal lune-radius paper, and the later radius-results paper found no
claim for the forward $\varphi_{\mathbb C}\to\varphi_e$ direction
\cite{RainaSokol,Mendiratta,GandhiLune,KhatterExpLemniscate,SebastianRadius}.
The reverse direction does occur in Gandhi and Ravichandran.  Theorem~3.5 on
article page 1750064-8 prints
$\log((\sqrt2+1)/2)=0.188226\ldots$ as the lune radius of the exponential
class~\cite{GandhiLune}.  Its part~(c) uses
\[
 |e^z-1|\le 1-e^{-r}\qquad(|z|=r),
\]
which already fails at $z=r>0$, where
$e^r-1>1-e^{-r}$.  Even the elementary valid disk estimate
$|e^z-1|\le e^r-1\le2-\sqrt2$ gives the larger sufficient radius
$\log(3-\sqrt2)=0.461\ldots$.  \cref{thm:exponential-crescent} gives the
sharp value.  Thus the earlier $0.188226\ldots$ entry is a source-level
mathematical conflict caught by directed reconciliation, not a survey-table
transcription.  Both sharp rows retain their
literature and human-review fields independently of the proofs.

Beyond the seventeen results of \cref{sec:sharp-results} and this reciprocal
pair, further machine-proven radius units and an eleven-row second-Hankel
landscape are collected in
Appendix~\ref{sec:families}.

\section{Open problems}\label{sec:open-problems}

Computation is also recorded where it has produced an attained pattern
without a global upper proof.  These rows are research questions, not
theorem counts.

\subsection{Third-Hankel conjecture}

Write
\[
 H_3(1)=
 \det\!\begin{pmatrix}
 1&a_2&a_3\\ a_2&a_3&a_4\\ a_3&a_4&a_5
 \end{pmatrix}.
\]
For every one of the 36 registered generators, the single-harmonic Schwarz
function $\omega(z)=z^3$ gives
\begin{equation}\label{eq:h3-conjecture}
 |H_3(1)|=\frac{B_1^2}{9}.
\end{equation}
The numerical optimizer returned the same value in 36/36 rows to within
$10^{-14}$; 31 values were also recognized symbolically, while five remain
stored as numeric matches to the symbolic expression in
\eqref{eq:h3-conjecture}.  For the classical starlike generator
$B_1=2$, this recovers the known sharp value $4/9$
\cite{KowalczykLeckoThomas}.  Across the corpus,
\eqref{eq:h3-conjecture} is an attained lower bound and, except where covered
by known theory, a conjectural sharp upper target.  Registered sharp theorems supply that upper bound for
the classical starlike~\cite{KowalczykLeckoThomas}, order-$1/2$, lemniscate
\cite{SokolStankiewicz}, exponential~\cite{Mendiratta},
cardioid--exponential $1+ze^z$~\cite{SharmaCardioid}, and petal
$1+\operatorname{arsinh}z$~\cite{KumarArora} generators; the remaining 30 instances retain conjectural status.

\subsection{Generalized Zalcman conjectures}

The classical $n=3$ Zalcman functional is $|a_3^2-a_5|$
\cite{ZalcmanMa,RavichandranVerma}.  Symbolic substitution of the admissible
witness $\omega(z)=z$ gives
\[
 |a_3^2-a_5|=\frac{19}{72}\quad\text{for the sine class},
 \qquad
 |a_3^2-a_5|=\frac{19}{48}\quad\text{for the crescent class}.
\]
Independent numerical searches saturate both values in the stored
conjecture artifact.  These values are attained lower bounds; global upper
proofs remain open unless a comparable general theorem is registered.

\subsection{Unrecognized high-precision constants}

Four examples from the 85 unidentified radius rows are shown in
\cref{tab:unidentified}; 20 of the 60 stored decimal digits are printed, with
the full values retained in the archived artifact.

\begin{table}[H]
\centering
\footnotesize
\caption{Unidentified directed radii.  Each row has a stable 60-digit value
and validated global touch, but no supported exact form.}
\label{tab:unidentified}
\begin{tabularx}{\linewidth}{>{\raggedright\arraybackslash}p{4.2cm}X}
\toprule
Directed pair & Numerical radius\\
\midrule
Bell $\to$ parabolic & \texttt{0.67667868512520178049}\ldots\\
crescent $\to$ lima\c{c}on $0.3$ & \texttt{0.54505676839431330408}\ldots\\
nephroid $\to$ modified sigmoid & \texttt{0.49592471952620771923}\ldots\\
three-leaf $\to 1+\tanh z$ & \texttt{0.83212637729405921116}\ldots\\
\bottomrule
\end{tabularx}
\end{table}

The values, canonical problem identities, and numerical-discovery records are
included in the archived radius dataset.  The public verifier checks the precompiled
rows and evidence states but does not rerun the underlying boundary search.

\subsection{Second-order Fekete--Szeg\H{o} enclosures}\label{sec:fs2}

The $m=3$ functional $|a_4-\mu a_2a_3|$ -- the second-order Fekete--Szeg\H{o}
family, which at $\mu=1$ is the generalized Zalcman functional $|a_2a_3-a_4|$ --
no longer reduces to a single harmonic.  A bounded interval branch-and-bound
campaign over the $36$-class catalog at the endpoints $\mu\in\{0,2\}$, with a
selected $\mu=1/2$ probe, returned $46$ \emph{certified brackets}
$[\,\text{attained},\ \text{bound}+10^{-6}\,]$ ($16$ at $\mu=0$, $29$ at $\mu=2$,
one at $\mu=1/2$); the Koebe class gives $|a_4|\le4$ at $\mu=0$ and $\le8$ at
$\mu=2$.  These certified bounds carry recognized candidate constants.  On the
flat-maximum classes (sine, lemniscate, modified sigmoid, $\tanh$), the interval
search leaves the upper endpoint open.  The recurring values $1/3$ and $1/6$
signal a positive-dimensional extremal set and define the conjectured extrema
for those cells.

\subsection{Inverse and logarithmic coefficients}\label{sec:inv-log}

The inverse fourth coefficient $|A_4|=|-5a_2^3+5a_2a_3-a_4|$ admits four
independently rechecked certified enclosures $|A_4|\le c+10^{-6}$, with candidate
$c$ attained at $\omega(z)=z$: $c=14$ for the Koebe class, $31/18$
(exponential), $368/81$ (cardioid), and $47/18$ (sine).  These are enclosures
with attained candidate endpoints.  The companion
logarithmic coefficient
$|\gamma_3|=\bigl|\tfrac{a_4}{2}-\tfrac{a_2a_3}{2}+\tfrac{a_2^3}{6}\bigr|$
reproduces the expected edge candidates ($1/3$, $1/6$, $2/9$, $1/6$ on the same
four classes).  Its global upper proof remains open under the bounded interval
ladder; a second-variation or SOS certificate is the next step.

\section{Related work}\label{sec:related}

The system belongs, first of all, to the experimental-mathematics tradition,
in which high-precision computation, visualization, pattern recognition, and
proof are treated as distinct stages~\cite{BorweinBailey,BorweinBaileyNotices}.
Its recognition stage is close in spirit to inverse symbolic calculation and to
integer-relation methods; PSLQ, in particular, separates the discovery of a
plausible relation from the subsequent proof of it~\cite{PSLQ}.  The registry
applies that same separation to the constants of geometric function theory,
while restricting its recognition vocabulary to the expressions suggested by the
registered generators.

Mathematical databases demonstrate how canonical identities turn scattered
objects into research infrastructure.  The OEIS links integer sequences to
formulas, programs, and literature through persistent identifiers
\cite{SloaneOEIS}; the LMFDB organizes interrelated arithmetic objects so that
computations and conjectures can be compared across specialties
\cite{CremonaLMFDB}.  The Geometric Function Atlas adopts that infrastructural ambition for
function classes and theorem-level claims, but its basic object is a directed
extremal problem rather than a sequence or arithmetic object.  Consequently,
normalization, direction, functional parameters, and sharpness wording belong
to the canonical label.

Ma and Minda's subordination framework provides the common mathematical
language for the classes studied here~\cite{MaMinda}.  Schur parameters supply
the coefficient-body reduction~\cite{Schur,Simon}.  The coefficient
examples connect to the classical Fekete--Szeg\H{o} problem~\cite{FeketeSzego} and
to modern Hankel-determinant bounds~\cite{LeeRavichandran}.  The sine, exponential, Bell, and modified-sigmoid generators were developed in
separate class-specific literatures~\cite{ChoSine,Mendiratta,BellClass,GoelKumar},
as were the lemniscate, cardioid, lima\c{c}on, nephroid, hyperbolic-cosine,
tangent, petal, and bean generators that populate the atlas and the coefficient
corpus~\cite{SokolStankiewicz,SharmaCardioid,MasihKanas,WaniNephroid,MundaliaKumar,UllahTanh,KumarArora,KumarYadav}.
The registered parameter families likewise carry their own founding references:
Janowski~\cite{Janowski}, strongly starlike~\cite{BrannanKirwan}, and the
parabolic (uniformly starlike) class~\cite{Ronning}.  In each case the
registry's role is to normalize their interactions rather than erase those
origins.

The certificate path draws on rigorous floating-point verification and interval
global optimization~\cite{Rump,Neumaier}.  Lean/mathlib and the HOL
Light--Isabelle Flyspeck project produce proof terms checked against a small
logical kernel~\cite{Mathlib,Flyspeck}.  Atlas artifacts instead combine
symbolic algebra, outward-rounded interval arithmetic, explicit attainment,
and independent rechecks under the stated classical
geometric-function-theory reductions.  Together these components provide an
executable verification layer for the corpus.

\section{Conclusion}\label{sec:conclusion}

Radius and coefficient problems in geometric function theory have accumulated
across a large literature under incompatible conventions for direction,
normalization, and sharpness.  This paper addresses that fragmentation by
proposing a registry-first methodology: every problem receives a canonical
identity by encoding its source and target generators, their ordered direction,
the functional or radius in question, and the language of the claimed sharpness
before any computation is carried out.  The Geometric Function Atlas serves as the 
software system that implements this methodology and carries each registered
problem through numerical location, symbolic recognition, certified
verification, and literature reconciliation as distinctly recorded stages.

The principal mathematical contributions are twofold.  First, we prove seventeen
exact sharp inclusion radii in one unified section (\cref{sec:sharp-results}): a
ten-result core portfolio established by one of four shared analytic routes (positive-majorant, logarithmic-majorant, inverse-plus-ray-bound, and implicit-target) and seven additional executable certificates discharged along the full evidence ladder.
Among these, \cref{thm:sine-sigmoid} sharpens a published sufficient radius by
$7.45\%$, replacing $\operatorname{arsinh}((e-1)/(e+1))$ with
$\arcsin((e-1)/(e+1))$; the remaining nine core rows are proved sharp with no
matching statement found in a scoped literature search.  In addition, the reciprocal
pair of \crefrange{thm:crescent-exponential}{thm:exponential-crescent}
(\cref{sec:crescent-exponential}) with $\sin 1$ in one direction and
$\log(1+\sqrt{2})$ in the other makes the geometric content of direction
dependence explicit while identifying a published constant whose supporting
estimate fails.  Second, two low-order coefficient functionals, $|a_3|$ and
$|a_2 a_3|$, are proved sharp in closed form for all $39$ registered Ma--Minda
classes by a single-harmonic Schur reduction (\cref{sec:low-order}).

The ablation study of \cref{sec:ablation} confirms that canonical problem
identity is necessary for reliable comparison at scale.  Erasing direction
collapses distinct radius constants in $253$ of $262$ reciprocal families ($96.56\%$),
and matching on value alone leaves $197$ of $463$ nontrivial rows ($42.55\%$) ambiguous.
On the coefficient side, the Fekete--Szeg\H{o} engine reproduces all $216$
general-theorem constants predicted by the Ma--Minda formula with no mismatch,
validating the certification chain across the registered generators.

The $323$ symbolic boundary contacts stored in the atlas form a set of pending
proof obligations awaiting global containment arguments; the $85$ unidentified
radius rows constitute a controlled reservoir of candidate conjectures; and the
measured Hankel negative anchor marks the point at which interval subdivision
should be replaced by structure-aware analysis.

Corpus-scale measurements, the reconciliation and review workflow, and full
reproducibility and availability details are given in
Appendices~\ref{sec:measurements}--\ref{sec:repro}.

\section*{Code and data availability}

The reproducible computational surface is released as the open-source Python
package \texttt{geometric-\allowbreak function-\allowbreak atlas}~\cite{GFAtlasSoftware}, distributed
through PyPI with a wheel also mirrored by piwheels.  The package contains
versioned, checksummed scientific artifacts and certificate-replay routines.
The full relational registry is a separately distributed immutable snapshot:
it is not embedded in the package, and the package verifies a user-supplied
snapshot and matching manifest before executing local registry queries.  The
full corpus and per-row records accompanying this article will be identified by
an archival DOI.

\appendix
\section{Coefficient validation experiment}\label{sec:case-coeff}

The batch experiment in \cref{sec:coeff} recovered all 216
Fekete--Szeg\H{o} constants across 36 registered generators from
\eqref{eq:maminda-fs}.  The modified-sigmoid row is a representative case that
shows what was compared.  For its generator,
\[
 \varphi_{SG}(z)=1+\frac12z-\frac1{24}z^3+\cdots,
\]
so $B_1=1/2$ and $B_2=0$ in \eqref{eq:coeff-reduction}.  At $\mu=1$,
\begin{align*}
 a_3-a_2^2
 &=\frac14c_2-\frac18c_1^2\\
 &=\frac14(1-r_0^2)\rho e^{i\theta}-\frac18r_0^2.
\end{align*}
The angular maximum is explicit, and the remaining maximum occurs at $\rho=1$:
\[
 \max_{\theta,\rho}|a_3-a_2^2|
 =\frac14-\frac18r_0^2\le\frac14.
\]
Equality occurs at $r_0=0$, $\rho=1$, corresponding to the Schwarz function
$\omega(z)=z^2$.  Hence
\begin{equation}\label{eq:sigmoid-fs}
 \max_{f\in\Sstar{\varphi_{SG}}}|a_3-a_2^2|=\frac14.
\end{equation}

\begin{figure}[H]
\centering
\includegraphics[width=0.84\linewidth]{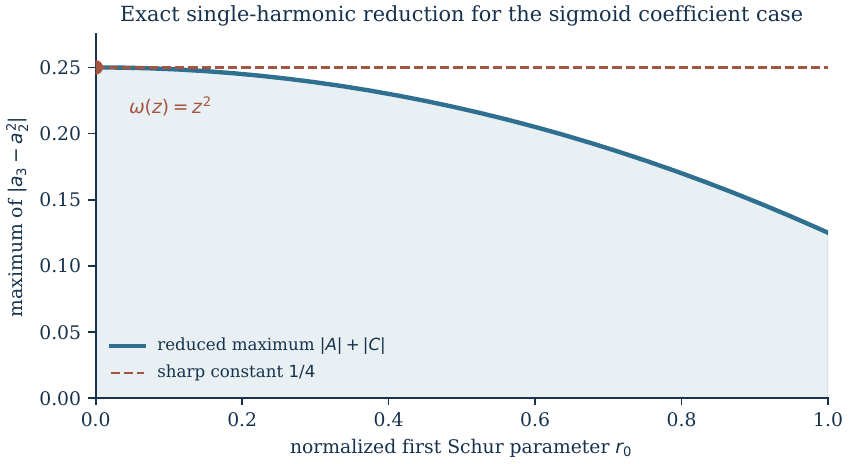}
\caption{The reduced objective for \eqref{eq:sigmoid-fs}, visualizing the
analytic single-harmonic reduction.}
\label{fig:sigmoid-fs}
\end{figure}

The reduced objective is shown in \cref{fig:sigmoid-fs}.
The value is subsumed by general theory and serves as a validation control.
Across the batch, every row tests the same full chain: class normalization,
registered series expansion, Schur reduction, phase elimination, explicit
attainment, certificate serialization, and independent recheck.  The
216/216 match demonstrates that the engine applies that chain uniformly across
the batch.

\section{Interval-stalling negative control}\label{sec:case-bracket}

Let
\[
 \varphi_{0.3}(z)=(1+0.3z)^2
\]
be the lima\c{c}on generator of Masih and Kanas~\cite{MasihKanas}, and consider
the second Hankel determinant
$\Htwo=a_2a_4-a_3^2$.  Here $B_1=3/5$, $B_2=9/100$, and $B_3=0$.
The hypotheses of the general Lee--Ravichandran--Supramaniam theorem~\cite{LeeRavichandran} hold:
$|B_2|\le B_1$ and
\[
4B_1^4-16B_1|B_3|+12B_2^2-6B_1|B_2|+9B_1^2
=\frac{8829}{2500}>0.
\]
Consequently, the known sharp value is
\begin{equation}\label{eq:hankel-sharp}
 \sup_{f\in\Sstar{\varphi_{0.3}}}|\Htwo(f)|
 =\frac{B_1^2}{4}=\frac{9}{100},
\end{equation}
attained by the admissible Schwarz function $\omega(z)=z^2$.  An independent
moment-SOS cross-check returns the same endpoint.

Without importing that theorem, the outward-rounded interval route returns the
independent enclosure
\begin{equation}\label{eq:hankel-bracket}
 \frac{9}{100}
 \le \sup_{f\in\Sstar{\varphi_{0.3}}}|\Htwo(f)|
 \le \frac{91}{800}=0.11375.
\end{equation}
The interval width is $19/800=0.02375$, or 26.39\% of the known sharp value (\cref{fig:limacon-bracket}).
An independent multiprecision pass rechecks the near-critical partition.

\begin{figure}[H]
\centering
\includegraphics[width=0.82\linewidth]{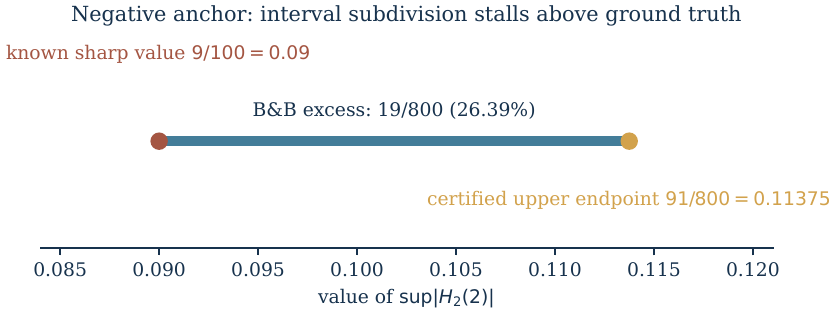}
\caption{Negative-anchor status of the lima\c{c}on $\Htwo$ row.  The left
endpoint is the known and attained sharp value; the right endpoint is the
loose outward-rounded upper bound returned by the independent interval route.
The visual gap measures the method's failure to close against the known sharp value.}
\label{fig:limacon-bracket}
\end{figure}

\par\medskip\noindent
Equations \eqref{eq:hankel-sharp} and \eqref{eq:hankel-bracket} answer two
different questions.  General theory, together with the attained witness,
settles the mathematical supremum; the wider interval records the independent
endpoint established by the outward-rounded route.  Subdivision here produces clusters
of nearly equivalent boxes around a flat maximizer, and leaves an upper excess
of 26.39\%.  The row therefore serves as a measured negative anchor: it marks
the regime in which analytic dimension reduction, or an applicable general
theorem, should replace further exhaustive subdivision.

\section{Measured result families}\label{sec:families}

\subsection{Second-Hankel enclosure experiment}

The negative control belongs to an eleven-row finite experiment.  We
regenerated all 11 stored $\Htwo$ rows and checked the
Lee--Ravichandran--Supramaniam hypotheses~\cite{LeeRavichandran} algebraically for every registered
generator, and compared the known attained value $(B_1/2)^2$ with the
independently certified branch-and-bound upper endpoint.  All 11 hypothesis
checks pass.  \cref{tab:h2-landscape} reports the actual bracket width
and divides it by the attained sharp value.

\begin{table}[H]
\centering
\footnotesize
\caption{Second-Hankel enclosure experiment.  Decimal columns are aligned for
comparison; exact endpoints are retained in the certificate records.
``Excess'' is $100(U-L)/L$, where $L=(B_1/2)^2$ is attained and $U$ is
the independently certified upper endpoint.}
\label{tab:h2-landscape}
\begin{tabular}{l r r r r}
\toprule
Registered class & Attained $L$ & Certified $U$ & Width & Excess\\
\midrule
bean--tanh & 0.062500 & 0.072500 & 0.010000 & 16.00\%\\
$\cosh\sqrt z$ & 0.062500 & 0.072500 & 0.010000 & 16.00\%\\
three-cusped epicycloid & 0.140625 & 0.166250 & 0.025625 & 18.22\%\\
four-leaf & 0.173611 & 0.207500 & 0.033889 & 19.52\%\\
lemniscate & 0.062500 & 0.072500 & 0.010000 & 16.00\%\\
lima\c{c}on, $s=0.3$ & 0.090000 & 0.113750 & 0.023750 & 26.39\%\\
order $0.75$ & 0.062500 & 0.082500 & 0.020000 & 32.00\%\\
rational K--R & 0.042893 & 0.052893 & 0.010000 & 23.31\%\\
modified sigmoid & 0.062500 & 0.072500 & 0.010000 & 16.00\%\\
strongly starlike $0.25$ & 0.062500 & 0.072500 & 0.010000 & 16.00\%\\
three-leaf & 0.160000 & 0.186667 & 0.026667 & 16.67\%\\
\bottomrule
\end{tabular}
\end{table}

\begin{figure}[tbp]
\centering
\includegraphics[width=\linewidth]{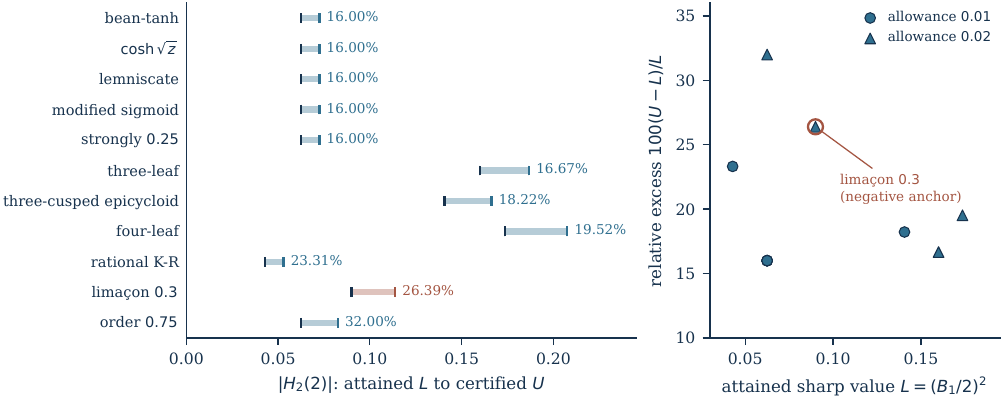}
\caption{The eleven-row second-Hankel experiment.  Each interval runs from the
attained sharp value $L$ to the independently certified upper endpoint $U$;
the relative excess is printed at its right endpoint.  The lima\c{c}on negative
anchor of Appendix~\ref{sec:case-bracket} is highlighted.  Excess measures the
configured certificate width, not an intrinsic property of the class.}
\label{fig:hankel-landscape}
\end{figure}

The relative excess ranges from 16\% to 32\%
(\cref{fig:hankel-landscape}).  Several proofs used the same
absolute stopping allowance, so the relative excess varies with the size of
the sharp value; the rational-K--R row, with the smallest value, shows this
most clearly.  This empirical pattern reflects the configured denominators.
Its practical
content is routing information: when the general theorem applies, importing
the analytic reduction dominates further subdivision; when it does not, the
same audit would isolate the rows that remain open.

\subsection{Additional machine-proven radii}

Structure-specific proof routes also discharge the rows in
\cref{tab:further-radii}; their symbolic global checks are included in
the reproducibility archive.  Their registry disposition remains \emph{no
extracted claim}, while machine proof settles the registered inclusions.

\begin{table}[H]
\centering
\small
\caption{Selected additional machine-proven radius units.}
\label{tab:further-radii}
\begin{tabularx}{\linewidth}{>{\raggedright\arraybackslash}p{4.0cm}>{\centering\arraybackslash}p{3.7cm}X}
\toprule
Directed pair & Sharp radius & Registry disposition\\
\midrule
Bell $\to$ exponential & $\log2$ & proven exact; no extracted claim\\
sine $\to$ parabolic & $\pi/6$ & proven exact; no extracted claim\\
$1+\tanh z\to$ cardioid & $\frac12\log5$ & proven exact; no extracted claim\\
sine $\to$ lima\c{c}on $L_s$ & $\min\{1,\arcsin(2s-s^2)\}$ & proven exact (parameter family); no extracted claim\\
\bottomrule
\end{tabularx}
\end{table}

The last row is the strongest indication that the system is finding
structure rather than collecting isolated constants.  For the lima\c{c}on family
$L_s(z)=(1+sz)^2$, $0<s\le1/\sqrt2$~\cite{MasihKanas}, the proper-radius branch ends at

\[
 s_0=1-\sqrt{1-\sin1}.
\]

For $0<s<s_0$ the radius is $\arcsin(2s-s^2)$; for
$s_0\le s\le1/\sqrt2$ the whole sine class is contained in the target.  The
$s=0.3$ and $s=0.5$ cases are corollaries of this one theorem unit, not two
independent results.

\begin{figure}[p]
\centering
\includegraphics[width=\linewidth]{figures/ten_result_boundary_contacts.pdf}
\caption{Complete boundary-contact inspection sheet for the nine portfolio
rows beyond \cref{thm:sine-sigmoid}.  Each panel shows the target boundary, the
sharp source image, and the certified real-axis contact.  Three representative
panels are enlarged in \cref{fig:ten-result-contacts}; exact radii and proof
routes are listed in \cref{tab:ten-results}.}
\label{fig:all-portfolio-contacts}
\end{figure}

\section{Corpus measurements and reconciliation}\label{sec:measurements}

All counts reported in this section were regenerated from the stored artifacts
on 19 July 2026.  We record the selection rule explicitly, because an
append-only event log may mention a single certificate several times over its
lifecycle.

\subsection{Coefficient corpus}

The stored coefficient corpus contains 227 certified upper bounds across 36
classes, of which 216 are Fekete--Szeg\H{o} values closed by the
single-harmonic reduction and 11 are $\Htwo$ enclosures.  At the level of
individual certificates, 306 proof artifacts are stored: 213 retain a box
partition and 93 close without one.  In total the snapshot records 527,369,833
processed boxes and 264,148,556 leaves -- figures that are themselves the
argument for structure-aware routing, since the later single-harmonic audit
closes all 216 Fekete--Szeg\H{o} rows analytically.  Algebraic dimension
reduction should therefore precede broad subdivision whenever the functional
admits it.

\subsection{Directed-radius atlas}

The atlas contains 702 ordered class pairs.  Its current evidence partition is
shown in \cref{fig:atlas} and \cref{tab:atlas}.

\begin{figure}[tbp]
\centering
\includegraphics[width=\linewidth]{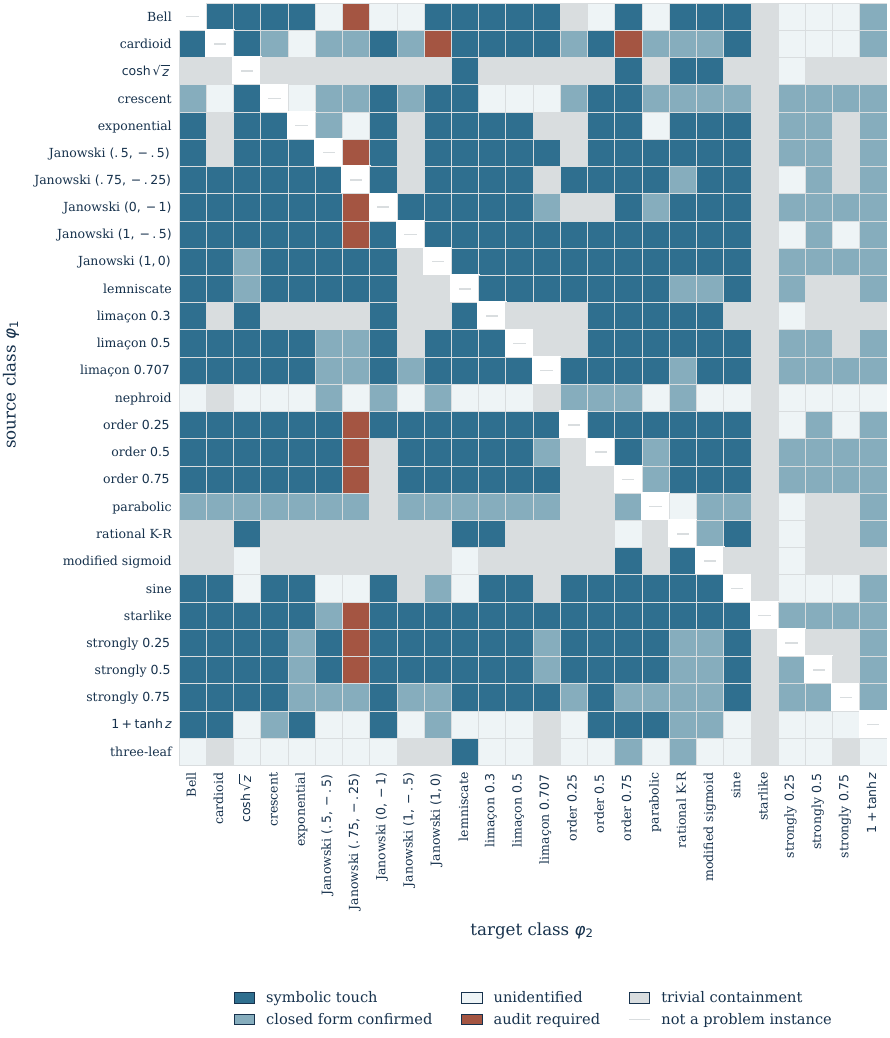}
\caption{The full directed atlas: read each cell from its row source to its
column target.  The $28\times26$ matrix is intentionally asymmetric because
the reversed arrow is a different problem.  Dark and light blue show symbolic
touch and confirmed closed forms; rust isolates audit failures; pale cells in
the nephroid and three-leaf rows mark the recognition frontier; and the gray
band on the right records radius-one containments.  Exact per-cell values are
available through the released registry snapshot.}
\label{fig:atlas-matrix}
\end{figure}

\begin{figure}[tbp]
\centering
\includegraphics[width=0.95\linewidth]{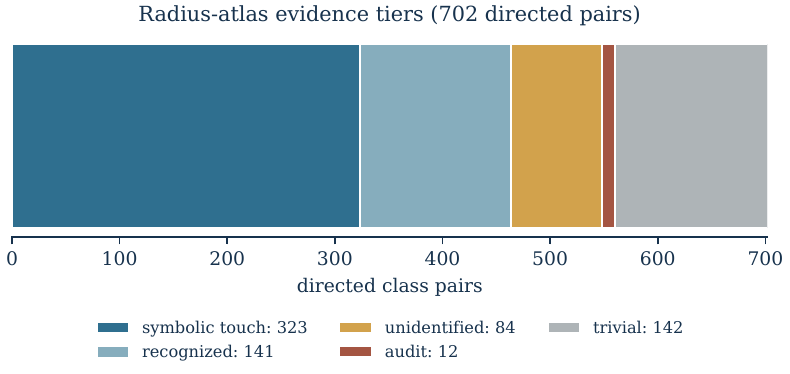}
\caption{Evidence tiers in the directed radius atlas: 323 symbolic-touch rows,
140 confirmed closed forms, 85 unidentified values, 12 audit failures, and 142
trivial containments.  Symbolic touch records boundary contact; global-proof
status is stored separately.}
\label{fig:atlas}
\end{figure}

\begin{table}[tbp]
\centering
\caption{Radius-atlas status partition.}
\label{tab:atlas}
\begin{tabularx}{0.94\linewidth}{l r X}
\toprule
Status & Rows & Meaning\\
\midrule
Symbolic touch & 323 & Contact equation recorded; global status remains explicit per row.\\
Closed form confirmed & 140 & Multiprecision expression match; proof is incomplete.\\
Unidentified & 85 & Stable high-precision value without a supported closed form.\\
Audit required & 12 & A symbolic consistency check failed; the row is excluded until the audit resolves it.\\
Trivial containment & 142 & Radius $1$; atlas-completeness record.\\
\bottomrule
\end{tabularx}
\end{table}

This partition shows where the experiment stopped and which operation would
next advance each row: symbolic recognition, a global proof, an audit, or none.
The 323 symbolic-touch rows await global analytic discharge, while the 85
stable unidentified rows supply candidate recognition and proof problems.

\subsection{Unresolved outcomes}

A condition-mining experiment produced 29 candidate implications.  Of these, six
have certified counterexamples and 23 have not yet been searched at all; the
correct conclusion is therefore six certified failures and 23 untested cases from
that run, not 29 falsifications.  The denominator, here as elsewhere, changes
the scientific claim.

The 85 unidentified radius rows remain active recognition problems.  Some may
admit closed forms beyond the current vocabulary; others may have no useful
elementary representation.  The atlas records this frontier explicitly.

\subsection{Reconciliation, review, and automation}\label{sec:reconcile}

Of the 702 radius rows, 605 are presently eligible for automated literature
reconciliation.  Their machine-level partition is given in
\cref{tab:reconciliation-partition}.

\begin{table}[H]
\centering
\caption{Machine-level literature-reconciliation partition.}
\label{tab:reconciliation-partition}
\begin{tabular}{l r p{7.5cm}}
\toprule
Verdict & Rows & Interpretation\\
\midrule
Known general theorem & 277 & The normalized row is structurally subsumed.\\
Specific known result & 4 & A comparable published statement was extracted.\\
Candidate improvement & 1 & The machine join isolated \cref{thm:sine-sigmoid}; subsequent theorem-level review confirms the sharp improvement.\\
No extracted claim & 323 & The structured join found no matching claim. This category contains \cref{thm:crescent-exponential}.\\
\midrule
\textbf{Total eligible} & \textbf{605} & Rows admitted to structured reconciliation.\\
\bottomrule
\end{tabular}
\end{table}

\emph{No extracted claim} records the structured-join result.  Conclusive
review records the literature systems checked, the primary sources inspected,
the finding for each source, and the reviewer's identity.  The sine-to-sigmoid
row has completed review; the crescent-to-exponential row retains the
no-extracted-claim disposition after its scoped audit.  The reciprocal row is
tagged as a resolved mathematical conflict with a published theorem statement.

Numerical search and expression recognition formulate candidates; symbolic
algebra and interval arithmetic establish specified identities or bounds.
Language-model assistance supported coding, drafting, retrieval triage, and
the independent recomputation described in \cref{sec:ablation}.  Literature
outputs were checked against source anchors, and mathematical claims derive
from deterministic symbolic or interval checks and attributed review.

\section{Reproducibility and limitations}\label{sec:repro}

Every proof-bearing row carries a persistent record holding the canonical
problem key, the expression or enclosure, the attainment data, the arithmetic
settings, and the recheck status.  Corpus summaries and the ten-result table are
regenerated under staleness tests, and each new computational route is
accompanied by its own known positive and negative controls.

The paper's computations are released as the MIT-licensed standalone Python
package \texttt{geometric-\allowbreak function-\allowbreak atlas}, version~0.2.0
\cite{GFAtlasSoftware}, kept deliberately separate from the registry website and
its mutable research workspace.  Through a command-line tool and Python API it
reproduces the catalogue of Ma--Minda generators used here, their Taylor
coefficients, the Ma--Minda Fekete--Szeg\H{o} constants and derivation metadata,
counterexample searches and checks, class screens, website plots, versioned
scientific-artifact lookups, exact-certificate replay, and the reviewed directed
inclusion-radius computations shipped with the release.  Result-producing
commands support versioned, JSON-capable output and preserve the distinction
between screens, enclosures, certificates, and proofs.  The distribution is
available from PyPI, with a platform-independent wheel mirrored by piwheels;
the supported isolated-tool installer can also obtain Python and the runtime
dependencies automatically.

The package references the registry at two deliberately separated levels.  A
checksummed, read-only snapshot of paper-facing scientific artifacts is bundled
with the package.  The larger relational registry database is not embedded in
the wheel, no mutable production database is contacted, and no canonical
snapshot URL is built in.  Instead, the package accepts a separately supplied
immutable SQLite snapshot and matching manifest, verifies their hashes, schema,
populations, and SQLite integrity, and then exposes local queries for statistics,
families, papers, facts, evidence, runs, applications, aliases, hierarchy data,
and stored witnesses.  The full corpus and per-row certificates accompany the
submission, and an archival DOI will identify the version of record.

The current system has five principal limitations.

\begin{enumerate}[leftmargin=*,itemsep=3pt]
  \item \textbf{Literature coverage is incomplete.} The systematic corpus does
  not cover closed-access papers for which no open preprint, author manuscript,
  or other lawfully accessible full text was obtained.  Their theorem statements
  therefore cannot participate in full-text extraction or reconciliation.  A
  database no-match is a retrieval outcome, not a novelty theorem.
  \item \textbf{Recognition is vocabulary-dependent.} A stable unidentified
  decimal may reflect a limited search basis rather than an intrinsically
  non-elementary constant.
  \item \textbf{Interval methods can stall on flat maxima.} Positive-dimensional
  maximizer sets produce clusters of nearly equivalent boxes.  Analytic
  dimension reduction is preferable to uncontrolled subdivision.
  \item \textbf{A certified interval route need not be sharp.} The negative
  control in Appendix~\ref{sec:case-bracket} shows
  that a correct outward-rounded upper bound can remain loose even when a
  separate theorem already fixes the sharp value.
  \item \textbf{Certificates use a stated trusted mathematical base.} They
  execute symbolic and interval checks under the classical Schur and
  Ma--Minda reductions used in the paper; they are not proof-assistant terms.
\end{enumerate}

The resource complements the literature by indexing precise statements,
provenance, and verification status.  Public proof records accompany
system-generated results, and the registry provides navigation across the
corpus.

\clearpage
\section{Certificate schema}\label{app:certificate}

The public certificate is a compact serialized view of the same registry
identity and evidence model used throughout the paper.  Its fields are listed
in \cref{tab:certificate-schema}; their concrete representation may vary by
proof route.  Proof-bearing records include every field.

\begin{table}[H]
\centering
\footnotesize
\caption{Compact certificate-record schema.}\label{tab:certificate-schema}
\begin{tabularx}{0.94\linewidth}{>{\raggedright\arraybackslash\bfseries}p{3.2cm}>{\raggedright\arraybackslash}X}
\toprule
Field & Contents\\
\midrule
Problem identity & Exact class generator or ordered source/target pair, parameter domain, functional, and normalization.\\
Retrieval aliases & Source-language names, translations, transliterations, notation, and the canonical object identifier to which they resolve.\\
Candidate & Exact symbolic form when recognized, high-precision decimal, and discovery precision.\\
Upper evidence & Symbolic reduction, analytic inequality, or interval partition with explicit endpoint.\\
Lower evidence & Admissible extremal or witness and its evaluated value.\\
Exactness & Proven exact, certified enclosure, recognized, touch proved, trivial containment, or audit required.\\
Independent recheck & Arithmetic backend, precision, checked leaves or identities, and worst recorded margin.\\
Literature provenance & Comparable paper and theorem records, direction, normalization, value, and sharpness wording.\\
Human review & Reviewer identity, role, decision, and concrete rationale.\\
Audit provenance & Generator version, persistent artifact identifier, timestamp, and append-only state transitions.\\
\bottomrule
\end{tabularx}
\end{table}

\end{document}